\newtheorem{theorem}{Theorem}[section]
\newtheorem{corrly}{Corollary}
\newtheorem{lemma}{Lemma}
\newtheorem{remark}{Remark}
\newtheorem{algorithm}{Algorithm}
\newcommand{\Prob}{\mathbb{P}}
\newcommand{\Expect}{\mathbb{E}}
\newcommand{\indic}{\mathbb{I}}
\newcommand{\defeq}{\overset{\Delta}{=}}
\newtheorem{problem}{Problem}
\definecolor{lightblue}{rgb}{.7, .8, 1}
\definecolor{lightgreen}{rgb}{.6, 1, .6}
\definecolor{brown}{rgb}{1,0.38,0.03}
\definecolor{OliveGreen}{rgb}{.2,0.6,0.2}
\definecolor{BrickRed}{rgb}{.7,0.2,0.2}
\newcommand{\ignore}[1]{} %%% {} empty inside
\long\def\symbolfootnote[#1]#2{\begingroup%
\def\thefootnote{\fnsymbol{footnote}}\footnote[#1]{#2}\endgroup}
\newcommand{\ADD}{{\mathsf{ADD}}}
\newcommand{\CADD}{{\mathsf{CADD}}}
\newcommand{\WADD}{{\mathsf{WADD}}}
\newcommand{\FAR}{{\mathsf{FAR}}}
\newcommand{\PFA}{\mathsf{PFA}}
\newcommand{\ANO}{{\mathsf{ANO}}}
\newcommand{\PDC}{{\mathsf{PDC}}}
\newcommand{\tauc}{\tau_{\scriptscriptstyle \text{C}}}
\newcommand{\taud}{\tau_{\scriptscriptstyle \text{D}}}
\newcommand{\tauw}{\tau_{\scriptscriptstyle \text{W}}}
\newcommand{\lambdad}{\lambda_{\scriptscriptstyle \text{D}}}
\newcommand{\Lambdad}{\Lambda_{\scriptscriptstyle \text{D}}}
\newcommand{\Psic}{\Psi_{\scriptscriptstyle \text{C}}}
\newcommand{\Psiw}{\Psi_{\scriptscriptstyle \text{W}}}
\newcommand{\Nd}{N_{\scriptscriptstyle \text{D}}}
\newcommand{\lambdaInf}{\lambda_{\scriptscriptstyle \infty}}
\newcommand{\LambdaInf}{\Lambda_{\scriptscriptstyle \infty}}
\begin{document}
\title{Data-Efficient Quickest Change Detection in Minimax Settings}
\author{Taposh Banerjee and Venugopal V. Veeravalli\\ ECE Department and Coordinated Science Laboratory\\
1308 West Main Street, Urbana, IL~~61801\\Email: {banerje5, vvv}$@$illinois.edu. Tel: +1(217) 333-0144, Fax: +1(217) 244-1642.}
%\author{\IEEEauthorblockN{Taposh Banerjee and Venugopal V. Veeravalli}\\
%\IEEEauthorblockA{ECE Department and
%Coordinated Science Laboratory\\
%University of Illinois at Urbana-Champaign, Urbana, IL\\
%Email: banerje5,vvv@illinois.edu}}

%\onecolumn
\maketitle

\vspace{-1.5cm}
%\ninept

\symbolfootnote[0]{\small Preliminary version of this paper has been presented at ICASSP 2012.  This research was supported in part by the National Science Foundation under grant CCF 08-30169 and DMS 12-22498, through the University of Illinois at 
Urbana-Champaign. This research was also supported in part by the U.S. Defense Threat Reduction Agency through subcontract 147755 at the University of Illinois from prime award HDTRA1-10-1-0086.}

\begin{abstract}
The classical problem of quickest change detection is studied with an additional
constraint on the cost of observations used in the detection process.
The change point is modeled as an unknown constant, and minimax formulations
are proposed for the problem.
The objective in these formulations is to find a stopping time and an
on-off observation control policy for the observation sequence, to minimize
a version of the worst possible average delay, subject to constraints on the
false alarm rate and the fraction of time observations are taken before change.
An algorithm called DE-CuSum is proposed and
is shown to be asymptotically optimal for the proposed formulations, as
the false alarm rate goes to zero.
Numerical results are used to show that the DE-CuSum algorithm has good trade-off curves and
performs significantly better than the approach of fractional sampling,
in which the observations are skipped using the outcome of a sequence of
coin tosses, independent of the observation process.
This work is guided by the insights gained
from an earlier study of a Bayesian version of this problem.
\end{abstract}

%\begin{keywords}
%Quickest change detection, observation control, CuSum, minimax, asymptotic optimality.
%\end{keywords}

\section{Introduction} \label{sec:Intro}
In the problem of quickest change detection, a decision
maker observes a sequence of random variables $\{X_n\}$. At some point of
time $\gamma$, called the change point, the distribution of the random variables changes.
The goal of the decision maker is to
find a stopping time $\tau$ on the $\{X_n\}$, %(the decision to stop at time $n$ is only a function of $\{X_1,\cdots,X_n\}$),
so as to minimize the average
value of the delay $\max\{0, \tau-\gamma\}$. The delay is zero
on the event $\{\tau<\gamma\}$, but this event is treated as a false alarm and is not
desirable. Thus, the average delay has to be minimized subject to a constraint
on the false alarm rate.
This problem finds
application in statistical quality control in industrial processes,
surveillance using sensor networks and cognitive radio networks;
see \cite{poor-hadj-qcd-book-2009}, \cite{tart-niki-bass-2013},
\cite{veer-bane-elsevierbook-2013}.
%, \cite{bane-etal-ieeetwc-2011},
%\cite{arun-vinod-ukiwcws-2009} and the references therein.

In the i.i.d. model of the quickest change detection problem,
the random variables
$\{X_n\}$ for $n<\gamma$ are independent and identically distributed
(i.i.d.) with probability density function (p.d.f) $f_0$, and $\{X_n\}$
for $n\geq \gamma$ are i.i.d. with p.d.f. $f_1$. In the Bayesian version of
the quickest change detection problem the change point $\gamma$ is modeled as
a random variable $\Gamma$.

In \cite{shir-siamtpa-1963}, \cite{tart-veer-siamtpa-2005} the i.i.d. model is studied in a Bayesian setting by
assuming the change point $\Gamma$ to be a geometrically distributed random
variable. The objective is to minimize the \textit{average detection delay}
with a constraint on the \textit{probability of false alarm}.
It is shown that under very
general conditions on $f_0$ and $f_1$, the
optimal stopping time is the one that stops the first time the \textit{a posteriori}
probability $\Prob(\Gamma\leq n | X_1,\cdots,X_n)$ crosses a pre-designed threshold.
The threshold is chosen to meet the false alarm constraint with equality. In the following we
refer to this algorithm as the Shiryaev algorithm.

In \cite{lord-amstat-1971}, \cite{poll-astat-1985}, \cite{mous-astat-1986}, \cite{ritov-astat-1990},
\cite{lai-ieeetit-1998}, \cite{tart-poll-polu-arxiv-2011}, no prior knowledge about
the distribution on the change point is assumed, and
the change point is modeled as an unknown constant.
In this non-Bayesian setting, the quickest change detection
problem is studied in two different minimax settings introduced in \cite{lord-amstat-1971} and \cite{poll-astat-1985}.
The objective in \cite{lord-amstat-1971}\;--\cite{tart-poll-polu-arxiv-2011}
is to minimize some version of the \textit{worst case average delay},
subject to a constraint on the \textit{mean time to false alarm}.
The results from these papers show that, variants of the Shiryaev-Roberts algorithm \cite{robe-technometrics-1966},
the latter being derived from the Shiryaev algorithm by setting the geometric parameter to zero,
and the CuSum algorithm \cite{page-biometrica-1954},
are asymptotically optimal for both the minimax formulations, as the mean time to false alarm goes to
infinity.

In many applications of quickest change detection, changes are infrequent and
there is a cost associated with acquiring observations (data). As a result, it is of interest to
study the classical quickest change detection problem
%(i.e. the problem studied in \cite{shir-siamtpa-1963}\;--\cite{tart-poll-polu-arxiv-2011}),
with an additional constraint on the
cost of observations used before the change point, with the cost of taking observations
after the change point being penalized through the metric on delay. In the following,
we refer to this problem as data-efficient quickest change detection.

In \cite{bane-veer-sqa-2012}, we studied data-efficient quickest change detection
in a Bayesian setting by adding another constraint to the Bayesian
formulation of \cite{shir-siamtpa-1963}.
The objective was to find a stopping time and an on-off observation control policy
on the observation sequence,
to minimize the average detection delay subject to constraints on the probability of false
alarm and the average number of observations used before the change point.
Thus unlike the classical quickest change detection problem, where the decision
maker only chooses one of the two controls, \textit{to stop and declare change} or \textit{to continue taking observations},
in the data-efficient quickest change detection problem we considered in \cite{bane-veer-sqa-2012},
the decision maker must also decide -- when the decision is to \textit{continue} --
whether to \textit{take} or \textit{skip} the next observation.

For the i.i.d. model, and for
geometrically distributed $\Gamma$, we showed in \cite{bane-veer-sqa-2012} that a two-threshold
algorithm is asymptotically optimal, as the probability of false alarm goes to zero.
This two-threshold algorithm, that we call the DE-Shiryaev algorithm in the following, is a generalized
version of the Shiryaev algorithm from \cite{shir-siamtpa-1963}.
In the DE-Shiryaev algorithm, the \textit{a posteriori} probability
that the change has already happened conditioned on available information, is computed at
each time step, and the change is declared the first time this probability crosses a threshold $A$.
When the \textit{a posteriori} probability is below this threshold $A$, observations are taken only
when this probability is above another threshold $B<A$.
When an observation is skipped, the \textit{a posteriori} probability is updated using the prior on the
change point random variable.
We also showed that, for reasonable values
of the false alarm constraint and the observation cost constraint, these two thresholds can be
selected independent of each other: the upper threshold $A$ can be selected directly from the false
alarm constraint and the lower threshold $B$ can be selected directly from the observation cost
constraint. Finally, we showed that the DE-Shiryaev algorithm achieves a significant
gain in performance over the approach of \textit{fractional sampling}, where the Shiryaev
algorithm is used and an observation is skipped based on the outcome of a coin toss.

In this paper we study the
data-efficient quickest change detection problem in a non-Bayesian setting,
by introducing an additional constraint on the cost of observations used
in the detection process, in the minimax settings
of \cite{lord-amstat-1971} and \cite{poll-astat-1985}. We first use the insights from
the Bayesian analysis in \cite{bane-veer-sqa-2012} to propose a metric for
data efficiency in the absence of knowledge of the distribution on the change point.
This metric is the fraction of time samples are taken before change. %, when
%the change happens at a far horizon.
We then propose extensions of the minimax formulations in \cite{lord-amstat-1971}
and \cite{poll-astat-1985} by introducing an additional constraint on data efficiency
in these formulations. Thus, the objective is to find a stopping
time and an on-off observation control policy to minimize a version of the worst case average delay,
subject to constraints on the mean time to false alarm and the fraction of time observations
are taken before change. Then, motivated by the structure of the DE-Shiryaev algorithm,
we propose an extension of the CuSum algorithm from \cite{page-biometrica-1954}.
We call this extension the DE-CuSum algorithm.
We show that the DE-CuSum algorithm
inherits the good properties of the DE-Shiryaev algorithm. That is, the DE-CuSum algorithm is
asymptotically optimal, is easy to design, and provides substantial performance improvements 
over the approach of fractional sampling, where the CuSum algorithm is used and observations 
are skipped based on the outcome of a sequence of coin tosses, independent of the 
observations process. 

The problem of detecting an anomaly in the behavior of an industrial process, under cost
considerations, is also considered in the literature of statistical process control. There
it is studied under the heading of sampling rate control or sampling size control;
see \cite{taga-jqt-1998} and \cite{stou-etal-jamstaa-2000} for a detailed survey,
and the references in \cite{bane-veer-sqa-2012} for some recent results.
However, none of these references study the data-efficient quickest change detection
problem under the classical quickest change detection setting, as done by us in
\cite{bane-veer-sqa-2012} and in this paper. For a result similar to our work
in \cite{bane-veer-sqa-2012} in a Bayesian setting
see \cite{prem-kuma-infocom-2008}. See \cite{geng-etal-samplingrights-allerton-2012} 
and \cite{kris-ieeetit-whento-2012} for other interesting formulations of quickest change 
detection with observation control. 

Since our work in this paper on data-efficient non-Bayesian quickest change detection
is motivated by our work on data-efficient Bayesian quickest change detection \cite{bane-veer-sqa-2012},
in Section~\ref{sec:Bayesian}, we provide a detailed overview of the results from \cite{bane-veer-sqa-2012}.
We also comment on the insights provided by the Bayesian analysis,
which we use in the development of a theory
for the non-Bayesian setting. In Section~\ref{sec:MinimaxFormulation},
Section~\ref{sec:DECUSUMALGO}, and Section~\ref{sec:PerfAnalyDECuSum}, we provide details
of the minimax formulations, a description of the DE-CuSum algorithm and the analysis of the
DE-CuSum algorithm, respectively. We provide the numerical results in Section~\ref{sec:Trade-off}.
%We conclude in Section 
%with a discussion on possible extensions of our work to data-efficient quickest
%change detection in sensor networks.

Table~\ref{Tab:Glossary} provides a glossary of the terms used in the paper.

\begin{table} [htbp]
 \centering
 \centering
 {\small
 \caption{Glossary}
\label{Tab:Glossary}
\begin{tabular}{|l|l|}
\hline
\cline{1-2}
\textbf{Symbol}&\textbf{Definition/Interpretation}\\
\hline
$o(1)$&$x=o(1)$ as $c \to c_0$, if $\forall \epsilon>0$, \\
&$\exists \delta>0$ s.t., $|x|\leq \epsilon$ if $|c-c_0|<\delta$\\
$O(1)$&$x=O(1)$ as $c \to c_0$, if $\exists \epsilon>0, \delta>0$ \\
& s.t., $|x|\leq \epsilon$ if $|c-c_0|<\delta$\\
$g(c)\sim h(c)$ & $\lim_{c \to c_0} \frac{g(c)}{h(c)}=1$ \\
\hspace{0.5cm} as $c \to c_0$ & or $g(c) =  h(c) (1+ o(1))$ as $c \to c_0$\\
$\Prob_{n}$ ($\Expect_{n}$)& Probability measure (expectation) \\
&when the change occurs at time $n$\\
$\Prob_\infty$ ($\Expect_\infty$) & Probability measure (expectation) \\
&when the change does not occur\\
%$\{X_n\}$ & observation sequence\\
$\text{ess sup}\ X$ & $\inf\{K \in \mathbb{R}: \Prob(X>K)=0\}$\\
$D (f_1\; \| \; f_0)$& K-L Divergence between $f_1$ and $f_0$, \\
&defined as $\Expect_1\left(\log \frac{f_1 (X)}{f_0(X)} \right)$\\
$D (f_0\; \| \; f_1)$& K-L Divergence between $f_0$ and $f_1$, \\
&defined as $\Expect_\infty\left(\log \frac{f_1 (X)}{f_0(X)} \right)$\\
$(x)^+$ & $\max\{x, 0\}$\\
$(x)^{h+}$ & $\max\{x, -h\}$\\
$M_n$ & $M_n=1$ if $X_n$ is used for decision making\\
$\Psi$&Policy for data-efficient quickest \\
& change detection $\{\tau, M_1, \cdots, M_\tau\}$\\
%$\Gamma$& Change point random variable\\
%$I_n$ & $\left[ M_1, \ldots, M_n, X_1^{(M_1)}, \ldots, X_n^{(M_n)} \right]$\\
$\ADD(\Psi)$ & $\sum_{n=0}^\infty \Prob(\Gamma=n) \ \Expect_n \left[(\tau - \Gamma)^+\right]$ \\
$\PFA(\Psi)$ & $\sum_{n=0}^\infty \ \Prob(\Gamma=n) \Prob_n (\tau<\Gamma)$\\
$\FAR(\Psi)$ & $\frac{1}{\Expect_\infty[\tau]}$\\
$\WADD(\Psi)$ & $\underset{n \geq 1}{\operatorname{\sup}}\ \text{ess sup} \ \Expect_n\left[(\tau-n)^+| I_{n-1}\right]$\\
$\CADD(\Psi)$ & $\underset{n \geq 1}{\operatorname{\sup}}\ \Expect_n[\tau-n| \tau\geq n]$\\
$\PDC(\Psi)$ & $\limsup_{n}  \frac{1}{n} \Expect_n \left[\sum_{k=1}^{n-1} M_k \Big| \tau \geq n\right]$\\
\hline
\end{tabular}}
 \end{table}

\section{Data-Efficient Bayesian Quickest Change Detection}
\label{sec:Bayesian}
In this section we review the Bayesian version of the data-efficient quickest change
detection we studied in \cite{bane-veer-sqa-2012}. We consider the i.i.d. model, i.e.,
$\{X_n\}$ is a sequence of random variables, $\{X_n\}$ for $n<\Gamma$ are i.i.d. with p.d.f. $f_0$,
and $\{X_n\}$ for $n\geq \Gamma$ are i.i.d. with p.d.f. $f_1$. We further assume that $\Gamma$
is geometrically distributed with parameter $\rho$:
\[\Prob(\Gamma=n) = (1-\rho)^{n-1} \rho.\]

For data-efficient quickest change detection we consider the following class of control policies.
At each time $n, n\geq 0$, a decision is made as to whether to \textit{take} or \textit{skip} the observation at time $n+1$.
Let $M_n$ be the indicator random variable such that $M_n=1$ if $X_n$ is used for decision making,
and $M_n=0$ otherwise. Thus, $M_{n+1}$ is a function of the information available at time $n$, i.e.,
\[
M_{n+1} = \phi_n(I_n),
\]
where, $\phi_n$ is the control law at time $n$, and
\[
I_n = \left[ M_1, \ldots, M_n, X_1^{(M_1)}, \ldots, X_n^{(M_n)} \right],
\]
represents the information at time $n$.
Here, $X_i^{(M_i)}$ represents $X_i$ if $M_i=1$, otherwise $X_i$ is absent from the information vector $I_n$.
Also, $I_0$ is an empty set.

For time $n\geq1$, based on the information vector $I_n$, a decision is made whether to \textit{stop and declare change}
or \textit{to continue taking observations}. Let $\tau$ be a stopping time on the information sequence $\{I_n\}$, that is
$\indic_{\{\tau=n\}}$ is a measurable function of $I_n$. Here, $\indic_F$ represents the indicator of the event $F$.
Thus, a policy for data-efficient quickest change detection is $\Psi = \{\tau, \phi_0, \ldots, \phi_{\tau-1} \}$.

Define the average detection delay
\[
\ADD(\Psi) \stackrel{\Delta}{=} \Expect\left[(\tau - \Gamma)^+\right],
\]
the probability of false alarm
\[
\PFA(\Psi) \stackrel{\Delta}{=} \Prob(\tau<\Gamma),
\]
and the metric for data-efficiency in the Bayesian setting we considered in \cite{bane-veer-sqa-2012},
the average number of observations
used before the change point,
\[
\ANO(\Psi) \stackrel{\Delta}{=} \Expect\left[\sum_{n=1}^{\min(\tau,\Gamma-1)} M_n\right].
\]
  The objective in \cite{bane-veer-sqa-2012} is to solve the following optimization problem:
\begin{problem}\label{prob:DEBayes}
\begin{eqnarray}
\label{eq:basicproblem}
\underset{\Psi}{\text{minimize}} && \ADD(\Psi) , \nonumber\\
\text{subject to } &&\PFA(\Psi) \leq \alpha,  \\
\text{  and  } &&\ANO(\Psi) \leq \zeta. \nonumber
\end{eqnarray}
Here, $\alpha$ and $\zeta$ are given constraints.
\end{problem}
\begin{remark}
When $\zeta \geq \Expect[\Gamma]-1$, Problem~\ref{prob:DEBayes} reduces to the classical Bayesian quickest change detection problem considered by Shiryaev in \cite{shir-siamtpa-1963}.
\end{remark}
\subsection{The DE-Shiryaev algorithm}\label{sec:DEShiryaevAlgo}
Define,
\[
p_n = \Prob \left( \Gamma \leq n \; | \; I_n \right).
\]
Then, the two-threshold algorithm from \cite{bane-veer-sqa-2012} is:
\begin{algorithm}[DE-Shiryaev: $\Psi(A,B)$]
\label{algo:TwoThreshold}
Start with $p_0=0$ and use the following control, with $B<A$, for $n\geq 0$:
\begin{equation}
\label{eq:TwoThresholdAlgo}
\begin{split}
               M_{n+1} & = \phi_n (p_n) =  \begin{cases}
               0 & \mbox{ if } p_n < B\\
               1 & \mbox{ if } p_n \geq B
               \end{cases}\\
               \taud &= \inf\left\{ n\geq 1: p_n > A \right\}.
               \end{split}
\end{equation}
The probability $p_n$ is updated using the following recursions:
\[
p_{n+1} = \begin{cases}
\tilde{p}_n  & \text{~if~} M_{n+1} = 0\\
\frac{\tilde{p}_n L(X_{n+1})}{ \tilde{p}_n  L(X_{n+1}) + (1-\tilde{p}_n )} & \text{~if~} M_{n+1} = 1
\end{cases}
\]
with $\tilde{p}_n = p_n + (1-p_n) \rho$ and
$L(X_{n+1}) = f_1(X_{n+1})/f_0(X_{n+1})$.
\end{algorithm}

\begin{remark}
With $B=0$ the DE-Shiryaev algorithm reduces to the Shiryaev algorithm from \cite{shir-siamtpa-1963}.
\end{remark}

The motivation for this algorithm comes from the fact that $p_n$ is a sufficient statistics
for a Lagrangian relaxation of Problem~\ref{prob:DEBayes}. This relaxed problem can be studied
using dynamic programming, and numerical studies of the resulting Bellman equation
shows that the DE-Shiryaev algorithm is optimal for a wide choice of system parameters.
For an analytical
justification see Section~\ref{sec:SQAOpt} below.

When Algorithm \ref{algo:TwoThreshold} is employed, the probability $p_n$ typically evolves as depicted in Fig. \ref{fig:Pkevolution}.
\begin{figure}[htb]
\center
%\hspace{3cm}
\includegraphics[width=8cm, height=5cm]{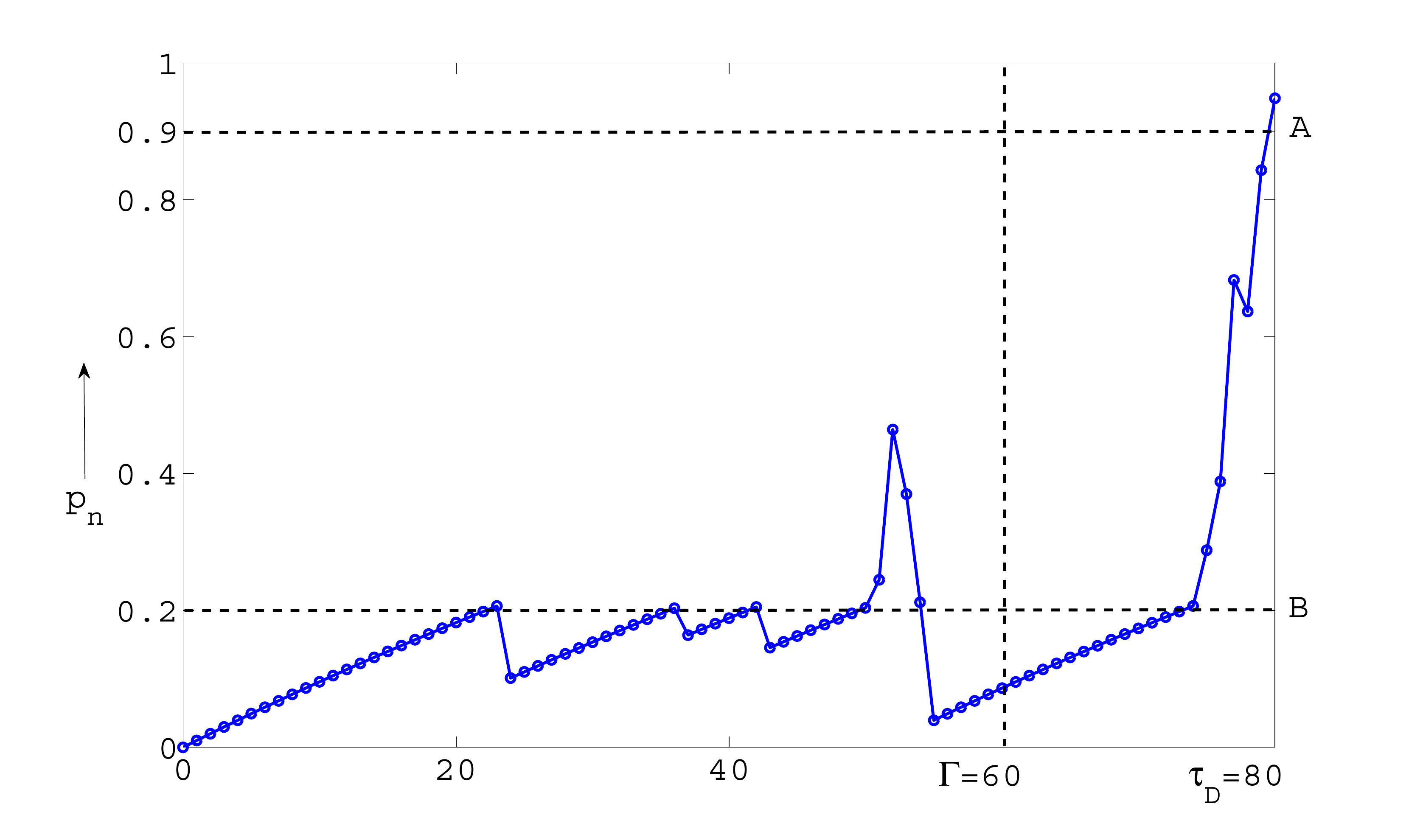}
\caption{Typical evolution of $p_n$ for $f_0 \sim {\cal N}(0,1)$, $f_1 \sim {\cal N}(0.8,1)$, and $\rho=0.01$, with thresholds $A=0.9$ and $B=0.2$. }
\label{fig:Pkevolution}
\end{figure}
As observed in Fig.~\ref{fig:Pkevolution}, the evolution starts with an initial value of $p_0=0$. This is because
we have implicitly assumed that the probability that the change has already happened even before we start taking
observations is zero. Also, note that when $p_n < B$, $p_n$ increases monotonically.
This is because when an observation is skipped, $p_n$ is updated using the prior on the change point,
and as a result
%because of the geometric nature of the change point, 
the probability that the change
has already happened increases monotonically. The change is declared at time $\taud$, the first time $p_n$
crosses the threshold $A$.

\subsection{Asymptotic Optimality and trade-off curves}\label{sec:SQAOpt}
It is shown in \cite{bane-veer-sqa-2012} that the $\PFA$ and $\ADD$ of the DE-Shiryaev algorithm approach that of
the Shiryaev algorithm as $\alpha \to 0$. Specifically, the following theorem is proved.
\smallskip
\begin{theorem}\label{thm:SQA}
If
\[
0< D(f_0 \;||\; f_1) < \infty \ \ \ \mbox{ and } \  \ \ 0< D(f_1\; ||\; f_0) < \infty,
\]
and
$L(X)$ is non-arithmetic (see \cite{sieg-seq-anal-book-1985}), then for a fixed $\zeta$,
the threshold $B$ can be selected such that for every $A>B$,
\[
\ANO(\Psi(A,B)) \leq \zeta,
\]
and with $B$ fixed to this value,
\begin{equation}\label{eq:BayesianDelayOpt}
\ADD(\Psi(A,B)) \sim \frac{|\log(\alpha)|}{D(f_1\;||\;f_0) + |\log(1-\rho)|} \mbox{ as } \alpha \to 0.
\end{equation}
and
\begin{equation}\label{eq:BayesianPFAOpt}
\PFA(\Psi(A,B)) \sim \alpha \left( \int_{0}^\infty e^{-x} dR(x)\right) \mbox{ as } \alpha \to 0.
\end{equation}
Here, $R(x)$ is the asymptotic overshoot distribution of the random walk
$\sum_{k=1}^n (L(X_k)+|\log(1-\rho)|)$, when it crosses a large positive boundary under $f_1$.
Since, \eqref{eq:BayesianDelayOpt} and \eqref{eq:BayesianPFAOpt} are also the performance of the Shiryaev algorithm
as $\alpha\to 0$ \cite{tart-veer-siamtpa-2005}, the DE-Shiryaev algorithm is asymptotically optimal.
\end{theorem}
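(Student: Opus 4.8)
The plan is to follow the classical analysis of the Shiryaev procedure, treating the skipping mechanism as a perturbation that only affects lower‑order terms. First I would pass to the log‑odds $\ell_n \defeq \log\bl(p_n/(1-p_n)\br)$. A short computation turns the recursion of Algorithm~\ref{algo:TwoThreshold} into: on a skip, $\ell_{n+1} = \log\bl(e^{\ell_n}+\rho\br) + |\log(1-\rho)|$, which is strictly increasing in $\ell_n$ and strictly exceeds $\ell_n$; on a take, $\ell_{n+1} = \log L(X_{n+1}) + \log\bl(e^{\ell_n}+\rho\br) + |\log(1-\rho)|$, i.e.\ a random walk with i.i.d.\ increments $\log L(X_{n+1}) + |\log(1-\rho)|$ plus the correction $\log\bl(e^{\ell_n}+\rho\br) - \ell_n$, which is positive, decreasing in $\ell_n$, and vanishes as $\ell_n\to\infty$ (a ``slowly changing'' perturbation). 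Thus, on the take‑steps, $\ell_n$ is a nonlinearly perturbed random walk with drift $-D(f_0\;\|\;f_1)+|\log(1-\rho)|$ before the change and $D(f_1\;\|\;f_0)+|\log(1-\rho)|>0$ after it; non‑arithmeticity of $L(X)$ is exactly the hypothesis that makes nonlinear renewal theory (Woodroofe, Lai--Siegmund) applicable, so that the first‑passage times of $\ell_n$ over high thresholds inherit the Wald‑identity asymptotics of the associated pure random walk and the overshoot converges in distribution to the $R$ of the statement.

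Next, the data‑efficiency constraint. Since the control law $\phi_n$ depends on the policy only through $B$ and the stopping time does not affect $M_n$ for $n<\taud$, we have $\ANO(\Psi(A,B)) \le \Expect\bl[\sum_{n=1}^{(\Gamma-1)^+}M_n\br]$ with the $M_n$ computed under the never‑stopping control with the same $B$, uniformly in $A$. Starting from $p_0=0$, the first $k(B)\defeq\lceil|\log(1-B)|/|\log(1-\rho)|\rceil$ updates are skips, so no observation is taken before time $k(B)+1$; bounding the remaining indicators crudely by $1$ and using that $\Gamma$ is geometric with $(1-\rho)^{k(B)}\le 1-B$ and $\Expect[\Gamma]=1/\rho$ gives $\ANO(\Psi(A,B)) \le \Expect\bl[(\Gamma-1)\,\indic_{\set{\Gamma>k(B)}}\br] \le (1-B)\bl(k(B)+1/\rho\br)$, which tends to $0$ as $B\uparrow1$, uniformly in $A$. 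So for any $\zeta>0$ one can fix $B$ (depending only on $\zeta$ and $\rho$) with $\ANO(\Psi(A,B))\le\zeta$ for every $A>B$; this is the value of $B$ used below.

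With $B$ so fixed, I would establish the two asymptotic relations. For the false‑alarm probability, the identity $\PFA(\Psi(A,B)) = \Expect[1-p_{\taud}]$ holds because $\taud$ is an $\{I_n\}$‑stopping time and $p_n=\Prob(\Gamma\le n\mid I_n)$; since stopping occurs only in the take‑region and (for $\alpha\to0$) a crossing of the high level $\log\bl(A/(1-A)\br)$ occurs overwhelmingly after the change, the overshoot $\ell_{\taud}-\log\bl(A/(1-A)\br)$ converges in distribution to $R$, and $1-p_{\taud} = \bl(1+e^{\ell_{\taud}}\br)^{-1} = \tfrac{1-A}{A}e^{-(\text{overshoot})}\bl(1+o(1)\br)$, whence $\PFA(\Psi(A,B)) \sim \tfrac{1-A}{A}\int_0^\infty e^{-x}\,dR(x)$; choosing $A$ so that $(1-A)/A\sim\alpha$ yields \eqref{eq:BayesianPFAOpt} and, since $\int_0^\infty e^{-x}\,dR(x)<1$, also $\PFA(\Psi(A,B))\le\alpha$ for small $\alpha$. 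For the delay, $\ADD(\Psi(A,B)) = \Expect[(\taud-\Gamma)^+]$; a coupling/monotonicity argument shows the conditional delay given $I_{\Gamma-1}$ is, up to an $O(1)$ term, largest when the change precedes any observation, i.e.\ starting from $p=0$. From $p=0$ one reaches the take‑region in at most $k(B)+1=O(1)$ skips, and the subsequent passage of the perturbed walk $\ell_n$ from $\log\bl(B/(1-B)\br)$ to $\log\bl(A/(1-A)\br)$ has, by Wald's identity together with nonlinear renewal theory and the post‑change drift $D(f_1\;\|\;f_0)+|\log(1-\rho)|$, expectation $\sim \log\bl(A/(1-A)\br)/\bl(D(f_1\;\|\;f_0)+|\log(1-\rho)|\br)$. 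With the above calibration $\log\bl(A/(1-A)\br)\sim|\log\alpha|$, so $\ADD(\Psi(A,B)) \le |\log\alpha|/\bl(D(f_1\;\|\;f_0)+|\log(1-\rho)|\br)\bl(1+o(1)\br)$. The matching lower bound is immediate: Problem~\ref{prob:DEBayes} only adds a constraint to the classical Bayesian problem, so its minimal delay is at least that of the Shiryaev procedure, which is asymptotically $|\log\alpha|/\bl(D(f_1\;\|\;f_0)+|\log(1-\rho)|\br)$ by \cite{tart-veer-siamtpa-2005}; combining gives \eqref{eq:BayesianDelayOpt} and the claimed asymptotic optimality. The main obstacle is the nonlinear renewal step: one must verify that the correction $\log(e^{\ell_n}+\rho)-\ell_n$ together with the excursions into the skip‑region constitute a genuinely slowly changing perturbation, so that both the high‑threshold overshoot limit $R$ and the first‑passage expectation of the associated random walk are legitimately invoked for $\ell_n$.
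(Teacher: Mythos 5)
The paper does not actually prove Theorem~\ref{thm:SQA} here: it is quoted from \cite{bane-veer-sqa-2012}, so there is no in-text proof to compare against. That said, your outline is precisely the route that reference (building on the Shiryaev analysis of \cite{tart-veer-siamtpa-2005}) takes, and it is the same structure this paper mirrors in its DE-CuSum analysis: pass to the log-odds $\ell_n$, identify $\log L(X_k)+|\log(1-\rho)|$ as the underlying random walk, treat $\log(e^{\ell_n}+\rho)-\ell_n$ together with the skip excursions as a slowly changing perturbation, and invoke nonlinear renewal theory for both the first-passage expectation and the overshoot limit $R$; the $\PFA=\Expect[1-p_{\taud}]$ identity and the lower bound inherited from the unconstrained Shiryaev problem then close the argument. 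Your $\ANO$ bound ($(1-B)(k(B)+1/\rho)\to 0$ as $B\uparrow 1$, uniformly in $A$) is correct and suffices for the existence claim, though it is more conservative than the design rule used in practice. The one step you rightly flag as the real work — showing the skip excursions do not disturb the nonlinear renewal limits — can be closed by two observations you leave implicit: each post-change sojourn below $B$ lasts at most $k(B)$ steps deterministically (since $p_n\ge 0$), and the number of post-change down-crossings of $B$ has finite expectation because the observed walk has positive drift $D(f_1\,\|\,f_0)+|\log(1-\rho)|$, so the total skip time after $\Gamma$ is $O(1)$ in expectation and the stopping overshoot is that of the pure post-change walk.
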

\smallskip
%We make the following remarks:
\begin{remark}
Equation \eqref{eq:BayesianPFAOpt} shows that $\PFA$ is not a function of the threshold $B$.
In \cite{bane-veer-sqa-2012}, it is shown that as $\alpha\to 0$ and as $\rho \to 0$, $\ANO$ is a function of $B$ alone.
Thus, for reasonable values of the constraints $\alpha$ and $\beta$, the constraints can be met independent of each other.
\end{remark}
\smallskip
\begin{remark}
The statement of Theorem \ref{thm:SQA} is stronger than the claim that the DE-Shiryaev algorithm is
asymptotically optimal. This is true because
\[
\PFA(\Psi(A,B)) = \Expect[1-p_{\taud}] \leq 1-A.
\]
Thus, with $A=1-\alpha$, $\PFA(\Psi(A,B))\leq \alpha$, and with $B$ chosen as mentioned in the theorem,
\eqref{eq:BayesianDelayOpt} alone establishes the asymptotic optimality of the DE-Shiryaev
algorithm.
\end{remark}
\smallskip
\begin{remark}
Although \eqref{eq:BayesianDelayOpt} is true for each fixed value of $\zeta$, as $\zeta$ becomes smaller,
a much smaller value of $\alpha$ is needed before the asymptotics `kick in'.
%which the right hand side of \eqref{eq:BayesianDelayOpt} can be used as an accurate estimate of the delay, depends on $\beta$.
\end{remark}
\smallskip

Fig. \ref{fig:TradeoffDEShiryaev} compares the performance of the Shiryaev algorithm, the DE-Shiryaev algorithm
and the fractional sampling scheme, for $\zeta=50$.
In the fractional sampling scheme, the Shiryaev algorithm is used and samples are skipped
by tossing a biased coin (with probability of success 50/99), without looking at the state of the system.
When a sample is skipped in the fractional sampling scheme,
the Shiryaev statistic is updated
using the prior on change point.
The figure clearly shows a substantial gap in performance between
the DE-Shiryaev algorithm and the fractional sampling scheme.
\begin{figure}[htb]
\center
%\hspace{3cm}
\includegraphics[width=8cm, height=5cm]{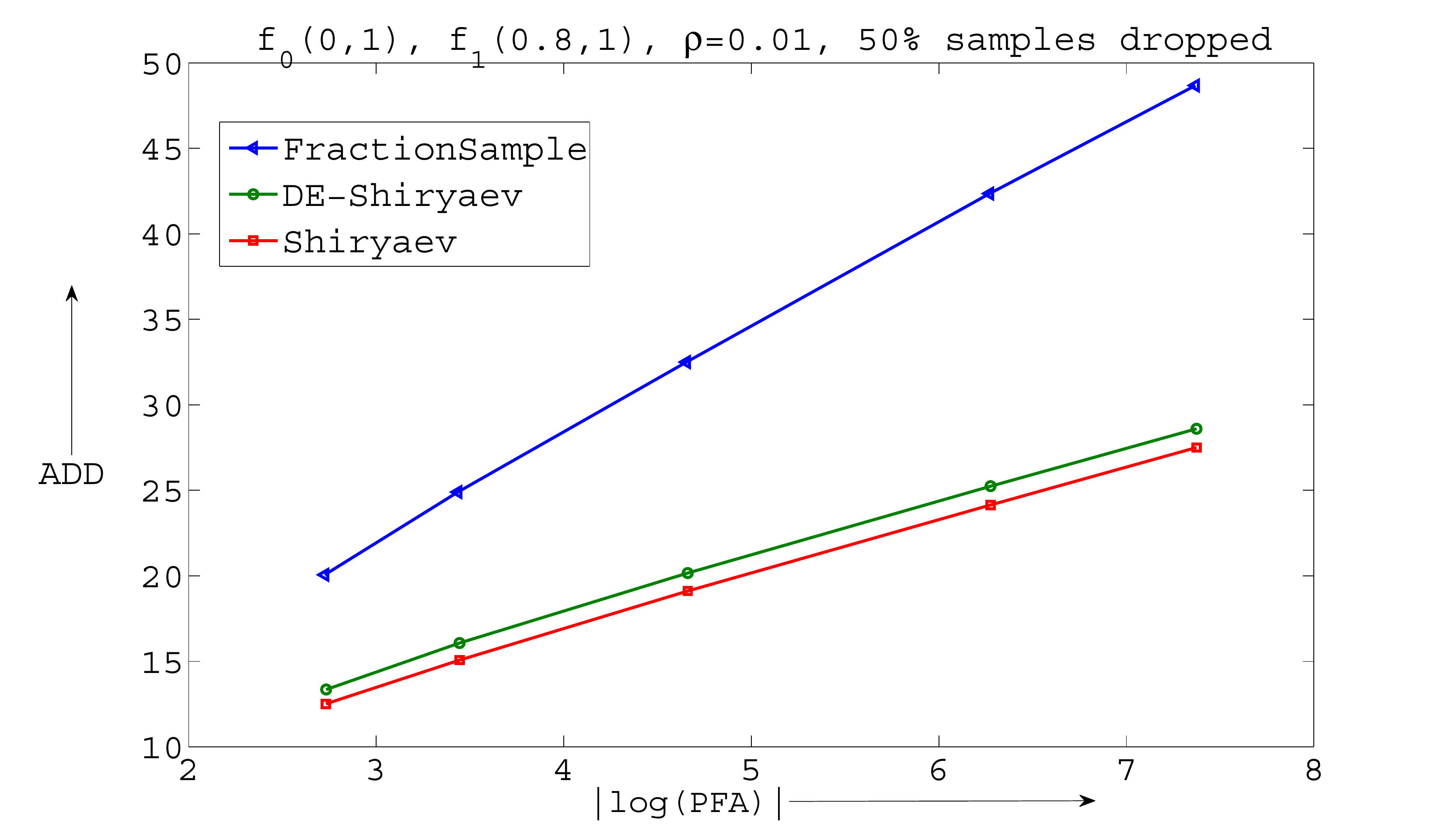}
\caption{Comparative performance of schemes for $f_0 \sim {\cal N}(0,1)$, $f_1 \sim {\cal N}(0.8,1)$, and $\rho=0.01$. }
\label{fig:TradeoffDEShiryaev}
\end{figure}

More accurate estimates of the delay and that of
$\ANO$ are available in \cite{bane-veer-sqa-2012}.

\subsection{Insights from the Bayesian setting}
\label{sec:insightsFromSQA}
We make the following observations on the evolution of the statistic $p_n$ in Fig. \ref{fig:Pkevolution}.
\begin{enumerate}
\item Let
\[
t(B) = \inf\{n\geq 1: p_n > B\}.
\]
Then after $t(B)$, the number of samples skipped when $p_n$ goes below $B$ is a function of the undershoot of $p_n$
and the geometric parameter $\rho$.
If $L^*(X_n)$ is defined as
\[
L^*(X_n) = \begin{cases}
L(X_n)  & \text{~if~} M_{n} = 1\\
1 & \text{~if~} M_{n} = 0
\end{cases}.
\]
Then $\frac{p_n}{1-p_n}$ can be shown to be equal to
\[\frac{p_n}{1-p_n} = \frac{\sum_{k=1}^n (1-\rho)^{k-1} \rho \prod_{i=k}^n L^*(X_i) }{\Prob(\Gamma>n)}.\]
Thus $\frac{p_n}{1-p_n}$ is the average likelihood ratio of all the
observations taken till time $n$, and since there is a one-to-one mapping between $p_n$
and $\frac{p_n}{1-p_n}$, we see that the number of samples skipped is a function of the likelihood ratio
of the observations taken.
\item When $p_n$ crosses $B$ from below, it does so with an overshoot that is bounded by $\rho$. This is because
\[
p_{n+1} - p_n = (1-p_n)\rho \leq \rho.
\]
For small values of $\rho$, this overshoot is essentially zero, and the evolution of $p_n$ is roughly statistically independent
of its past evolution. Thus, beyond $t(B)$, the evolution of $p_n$ can be seen as a sequence of \textit{two-sided} statistically
independent tests, each two-sided test being a test for sequential hypothesis testing between ``$H_0 = \text{pre-change}$",
and ``$H_1 = \text{post-change}$". If the decision in the two-sided test is $H_0$, then samples are skipped depending
on the likelihood ratio of the observations, and the two-sided test is repeated on the samples beyond the skipped samples.
The change is declared the first time the decision in a two-sided test is $H_1$.
%
%
%Sequence of to sided tests. I.I.D. nature of two sided tests.
\item Because of the above interpretation of the evolution of the DE-Shiryaev algorithm as a sequence of roughly independent
two-sided tests, we see that the constraint on the observation cost is met
by delaying the measurement process on the basis of the prior statistical knowledge of the
change point, and then beyond $t(B)$, controlling the fraction of time $p_n$ is above $B$, i.e.,
controlling the fraction of time samples are taken.
\end{enumerate}
These insights will be crucial to the development of the theory for data-efficient quickest change detection
in the non-Bayesian setting.

\section{Data-Efficient Minimax Formulation}
\label{sec:MinimaxFormulation}

In the absence of a prior knowledge on the distribution of the change point,
as is standard in classical quickest change detection literature, we model the change point as
an unknown constant $\gamma$. As a result, the quantities $\ADD,\PFA,\ANO$
in Problem~\ref{prob:DEBayes} are not well defined. Thus, we study the data-efficient
quickest change detection problem in a minimax setting.
In this paper we consider two most popular minimax formulations: one is due to Pollak \cite{poll-astat-1985}
and another is due to Lorden \cite{lord-amstat-1971}.

We will use the insights from the Bayesian setting of Section~\ref{sec:Bayesian} to study data-efficient minimax quickest change detection.
Our development will essentially follow the layout of the Bayesian setting. Specifically, we first
propose two minimax formulations for data-efficient
quickest change detection. Motivated by the structure of the DE-Shiryaev algorithm,
we then propose an algorithm for data-efficient quickest change detection in the minimax settings.
This algorithm is a generalized version of the CuSum algorithm \cite{page-biometrica-1954}.
We call this algorithm the DE-CuSum algorithm.
We show that the DE-CuSum algorithm is asymptotically optimal under both minimax settings.
We also show that in the DE-CuSum algorithm,
the constraints on false alarm and observation cost can be met independent of each other.
Finally, we show that we can achieve a substantial
gain in performance by using the DE-CuSum algorithm as compared to the approach of fractional sampling.

We first propose a metric for data-efficiency in a non-Bayesian setting.
In Section~\ref{sec:insightsFromSQA}, we saw that in the DE-Shiryaev algorithm, observation cost constraint is
met using an initial wait, and by controlling the fraction of time observations are taken, after the initial wait.
In the absence of
prior statistical knowledge on the change point such an initial wait cannot be justified.
This motivates us to seek control policies that can meet a
constraint on the fraction of time observations are taken before change.
With $M_n$, $I_n$, $\tau$, and $\Psi$ as defined earlier in Section~\ref{sec:Bayesian},
we propose the following duty cycle based observation cost metric, Pre-change Duty Cycle
($\PDC$):
\begin{equation}
\label{eq:PDC}
\PDC(\Psi) = \limsup_{n}  \frac{1}{n} \Expect_n \left[\sum_{k=1}^{n-1} M_k \Big| \tau \geq n\right].
\end{equation}
Clearly, $\PDC\leq 1$.

We now discuss why we use $\limsup$ rather than $\sup$ in defining $\PDC$.
In all reasonable policies $\Psi$, $M_1$ will typically be set to 1.
As mentioned earlier, this is because an initial wait cannot be justified without a prior
statistical knowledge of the change point. As a result, in \eqref{eq:PDC},
we cannot replace the $\limsup$ by $\sup$, because the latter would give us a $\PDC$ value of 1.
Even otherwise, without any prior knowledge on the change point, it is reasonable to
assume that the value of $\gamma$ is large, and hence the $\PDC$ metric defined
in \eqref{eq:PDC} is a reasonable metric for our problem.

For false alarm, we consider the metric used in \cite{lord-amstat-1971} and \cite{poll-astat-1985},
the mean time to false alarm or its reciprocal, the false alarm rate:
\begin{equation}\label{eq:FAR_Def}
\FAR(\Psi) = \frac{1}{\Expect_\infty \left[ \tau\right]}.
\end{equation}

For delay we consider two possibilities:
the minimax setting of Pollak \cite{poll-astat-1985} where
the delay metric is the following supremum over time of the conditional delay\footnote{We are only interested
in those policies for which the $\CADD$ is well defined.}
\begin{equation}\label{eq:CADD_Def}
\CADD(\Psi) = \sup_n \ \ \Expect_n \left[ \tau-n | \tau \geq n \right],
\end{equation}
or the minimax setting of Lorden \cite{lord-amstat-1971}, where the
delay metric is the supremum over time of the essential supremum
of the conditional delay
\begin{equation}\label{eq:WADD_Def}
\WADD(\Psi) = \sup_n  \text{ess sup}\; \Expect_n \left[ (\tau-n)^+ | I_{n-1} \right].
\end{equation}
\begin{comment}
As discussed in \cite{mous-astat-2008}, an interesting way to compare these two delay metrices
is to see the above problem in the presence of an adversary, who
can control/decide at time $n-1$, the chances of a change at time $n$, so as
to inflict maximum damage to the decision maker. The objective
of the decision maker is to minimize the delay against this adversarial action.
In this set-up if the adversary has access to the information vector $I_{n-1}$,
then he would cause a change to happen at time $n$ provided $I_{n-1}$ represents the \textit{worst} possible
information and $n$ is the worst possible time.
In this case \eqref{eq:WADD_Def} is the effective delay metric as seen from the point of view of the
decision maker. However, if the adversary does not have access to the information vector, then
he can only choose the worst possible time to apply the change. In the latter case,
\eqref{eq:CADD_Def} is the effective delay metric as seen by the decision maker.
\end{comment}
%
Note that unlike the delay metric in \cite{lord-amstat-1971},
$\WADD$ in \eqref{eq:WADD_Def} is a function of the observation control through
$I_{n-1} = \left[ M_1, \ldots, M_{n-1}, X_1^{(M_1)}, \ldots, X_{n-1}^{(M_{n-1})} \right]$,
which may not contain the entire set of observations.

Since, $\{\tau=n\}$ belongs to the sigma algebra generated by $I_{n-1}$, we have
\[\CADD(\Psi) \leq \WADD(\Psi).\]
\smallskip
%\tb{Add comment on interpretation of the delays??}
%

Our first minimax formulation is the following data-efficient extension of Pollak \cite{poll-astat-1985}
\smallskip
\begin{problem}\label{prob:DEPollak}
\begin{eqnarray}
\label{eq:MinimaxProblemPollak}
\underset{\Psi}{\text{minimize}} && \CADD(\Psi),\nonumber \\
\text{subject to } && \FAR(\Psi) \leq \alpha, \\
\text{  and  } &&\PDC(\Psi) \leq \beta. \nonumber
\end{eqnarray}
Here, $0 \leq \alpha, \beta \leq 1$ are given constraints.
\end{problem}
\smallskip
%In \eqref{eq:MinimaxProblem}, we have implicitly restricted the search over
%those control policies for which the CADD, FAR, and PDC are well defined.
We are also interested in the data-efficient extension of the minimax formulation of Lorden \cite{lord-amstat-1971}.
\begin{problem}\label{prob:DELorden}
\begin{eqnarray}
\label{eq:MinimaxProblemLorden}
\underset{\Psi}{\text{minimize}} && \WADD(\Psi),\nonumber \\
\text{subject to } && \FAR(\Psi) \leq \alpha, \\
\text{  and  } &&\PDC(\Psi) \leq \beta. \nonumber
\end{eqnarray}
Here, $0 \leq \alpha, \beta \leq 1$ are given constraints.
\end{problem}
\smallskip

\begin{remark}
With $\beta=1$, Problem~\ref{prob:DEPollak} reduces to the minimax formulation of Pollak in \cite{poll-astat-1985},
and Problem~\ref{prob:DELorden} reduces to the minimax formulation of Lorden in \cite{lord-amstat-1971}.
\end{remark}

In \cite{page-biometrica-1954}, the following algorithm called the CuSum algorithm is proposed:
\smallskip
\begin{algorithm}[$\mathrm{CuSum}$: $\Psic$]\label{algo:CuSum}
Start with $C_0=0$, and update the statistic $C_n$ as
\[C_{n+1} = \left(C_n + \log L(X_{n+1})\right)^+, \]
where $(x)^+ = \max\{0,x\}$. Stop at
\[\tauc = \inf\{n\geq1: C_n > D\}.\]
\end{algorithm}
It is shown by Lai in \cite{lai-ieeetit-1998} that the CuSum algorithm is asymptotically optimal for
both Problem~\ref{prob:DEPollak} and Problem~\ref{prob:DELorden}, with $\beta=1$, as $\alpha \to 0$
(see Section~\ref{sec:AnalyCuSum} for a precise statement).

In the following we propose the DE-CuSum algorithm,
an extension of the CuSum algorithm for the data-efficient setting,
and show that it is asymptotically optimal, for each fixed $\beta$, as $\alpha\to0$;
see Section~\ref{sec:AsymptoticOpt}.

\section{The DE-CuSum algorithm}
We now present the DE-CuSum algorithm.
\label{sec:DECUSUMALGO}
\begin{algorithm}[$\mathrm{DE-CuSum}$: $\Psiw(D, \mu, h)$]
\label{algo:DECuSum}
Start with $W_0=0$ and fix $\mu>0$, $D>0$ and $h\geq0$. For $n\geq 0$ use the following control:
\begin{equation*}
\label{eq:DECuSum}
\begin{split}
               M_{n+1} &= \begin{cases}
               0 & \mbox{ if \ } W_n < 0\\
               1 & \mbox{ if \ } W_n \geq 0
               \end{cases}\\
               \tauw &= \inf\left\{ n\geq 1: W_n > D \right\}.
               \end{split}
\end{equation*}
The statistic $W_n$ is updated using the following recursions:
\[
W_{n+1} = \begin{cases}
\min\{W_n + \mu, 0\} & \text{~if~} M_{n+1} = 0\\
(W_n + \log L(X_{n+1}))^{h+} & \text{~if~} M_{n+1} = 1
\end{cases}
\]
%where $L(X_{k+1}) = f_1(X_{k+1})/f_0(X_{k+1})$ , and
where $(x)^{h+} = \max\{x, -h\}$.
%\[(x)^{h+} = \begin{cases}
%               \max\{x, h\} & \mbox{ if } x > -h\\
%               x & \mbox{ otherwise }
%               \end{cases}\]
\end{algorithm}

%We use $\nu(d)$ for the stopping time of the CuSum algorithm.

When $h=\infty$, the DE-CuSum algorithm works as follows. The statistic $W_n$ starts at 0,
and evolves according to the CuSum algorithm till it goes below 0.
When $W_n$ goes below 0, it does so with an undershoot. Beyond this,
$W_n$ is incremented deterministically (by using the recursion $W_{n+1}=W_n+\mu$),
and observations are skipped till $W_n$
crosses 0 from below.
As a consequence, the number of observations that are skipped
is determined by the undershoot (log likelihood ratio of the observations)
as well as the parameter $\mu$.
When $W_n$ crosses 0 from below, it is reset to 0. Once $W_n=0$,
the process renews itself and continues to evolve this way until $W_n > D$, at which
time a change is declared.

If $h< \infty$, $W_n$ is truncated to $-h$ when $W_n$
goes below 0 from above. In other
words, the undershoot is reset to $-h$ if its magnitude is larger than $h$.
A finite value of $h$ guarantees
that the number of samples skipped is bounded by $\frac{h}{\mu}+1$.
This feature will be crucial to the $\WADD$ analysis of the DE-CuSum algorithm in
Section~\ref{sec:DECuSumCADDWADD}.
%If $h=\infty$, we will show that
%it may not be possible to achieve $\PDC$ values
%close to 1. Thus, a finite value of $h$ may also be required if we wish
%to achieve $\PDC$ values close to 1.

If $h=0$, the DE-CuSum statistic $W_n$ never becomes negative and hence
reduces to the CuSum statistic and evolves as: $W_0=0$, and for $n\geq 0$,
\[W_{n+1} = \max\{0, W_n + \log L(X_{n+1})\}.\]

Thus, $\mu$ is a substitute for the Bayesian prior $\rho$
that is used in the DE-Shiryaev algorithm described in Section~\ref{sec:DEShiryaevAlgo}.
But unlike $\rho$ which represents
a prior statistical knowledge of the change point, $\mu$ is a design parameter. An appropriate value of
$\mu$ is selected to meet the constraint on $\PDC$; see Section~\ref{sec:PDCConstraint} for details.

The evolution of the DE-CuSum algorithm is plotted in Fig.~\ref{fig:DECuSum_evolution}.
\begin{figure}[htb]
\center
%\hspace{3cm}
\includegraphics[width=8cm, height=5cm]{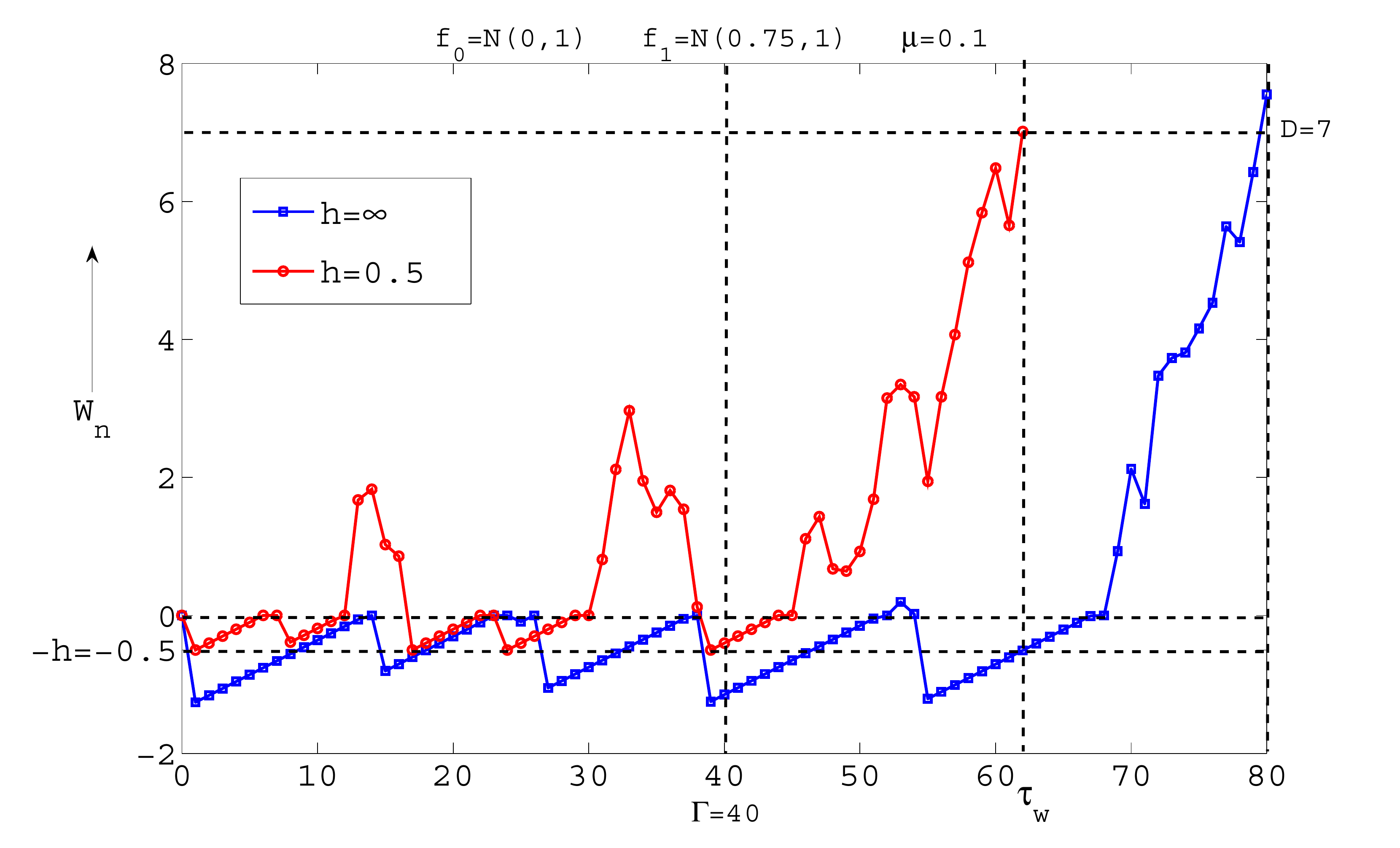}
\caption{Typical evolution of $W_n$ for $f_0 \sim {\cal N}(0,1)$, $f_1 \sim {\cal N}(0.75,1)$, $\Gamma=40$, $D=7$,
$\mu=0.1$, with two different values of $h$: $h=\infty$ and $h=0.5$. When $h=0.5$, 
the undershoots are truncated at $-0.5$.}
\label{fig:DECuSum_evolution}
\end{figure}
In analogy with the evolution of the DE-Shiryaev algorithm,
the DE-CuSum algorithm can also be seen as a sequence of independent two-sided tests.
In each two-sided test a Sequential Probability Ratio Test (SPRT) \cite{wald-wolf-amstat-1948},
with log boundaries $D$ and $0$, is used to distinguish between the two hypotheses ``$H_0 = \text{pre-change}$"
and ``$H_1 = \text{post-change}$".
If the decision in the SPRT is in favor of $H_0$, then samples are skipped based on the
likelihood ratio of all the observations taken in the SPRT. A change is declared
the first time the decision in the sequence of SPRTs is in favor of $H_1$.
If $h=0$, no samples are skipped and the DE-CuSum reduces to the CuSum algorithm, i.e., to a sequence of SPRTs
(also see \cite{sieg-seq-anal-book-1985}).

%\begin{remark}
Unless it is required to have a bound on the maximum number of samples skipped, the DE-CuSum
algorithm can be controlled by just two-parameters: $D$ and $\mu$. We will show in the following
that these two parameters can be selected independent of each other directly from the constraints. That
is the threshold $D$ can be selected so that $\FAR \leq \alpha$ independent of the value of $\mu$.
Also, it is possible to select a value of $\mu$ such that $\PDC \leq \beta$ independent of the choice of $D$.
%\end{remark}

\begin{remark}
With the way the DE-CuSum algorithm is defined, we will see in the following that it may not be possible to
meet $\PDC$ constraints that are close to 1, with equality. We ignore this issue in the rest of the paper,
as in many practical settings the preferred value of $\PDC$ would be closer to 0 than 1.
But, we remark that the DE-CuSum algorithm can be easily modified to achieve $\PDC$ values
that are close to 1 by resetting $W_n$ to zero if the undershoot is smaller
than a pre-designed threshold.
\end{remark}

\begin{remark}
One can also modify the Shiryaev-Roberts algorithm \cite{robe-technometrics-1966}
and obtain a two-threshold version of it,
with an upper threshold used for stopping and a lower threshold used for on-off observation control.
Also note that the SPRTs of the two-sides tests considered above have a lower threshold of 0.
One can also propose variants of the DE-CuSum algorithm with a negative lower threshold for the SPRTs.
\end{remark}

\begin{remark}
For the CuSum algorithm, the supremum in \eqref{eq:CADD_Def} and \eqref{eq:WADD_Def}
 is achieved when the change is applied at time $n=1$ (see also \eqref{eq:CuSumDelayCADDWADD}).
 This is useful from the point of view of
simulating the test. However,
in the data-efficient setting, since the information vector also contains
information about missed samples, %it is intuitive to expect that
the worst case change point in \eqref{eq:CADD_Def} would depend on the observation control and
may not be $n=1$. But note that
in the DE-CuSum algorithm, the test statistic evolves
as a Markov process. As a result, the worst case usually occurs in the initial slots,
before the process hits stationarity. This is useful from the point of view of simulating
the algorithm.
In the analysis of the DE-CuSum algorithm provided in Section~\ref{sec:PerfAnalyDECuSum} below,
we will see that the $\WADD$ of the DE-CuSum algorithm is equal to its delay when change occurs
at $n=1$, plus a constant.
Similarly, even if computing the $\PDC$
may be a bit difficult using simulations, we will provide simple numerically-computable
upper bound on the $\PDC$ of the DE-CuSum algorithm
that can be used to ensure that the $\PDC$ constraint is satisfied.
\end{remark}

\section{Analysis and design of the DE-CuSum algorithm}
\label{sec:PerfAnalyDECuSum}
The identification/intepretation of the DE-CuSum algorithm as a sequence of two-sided tests will now
be used in this section to perform its asymptotic analysis.

Recall that the DE-CuSum algorithm can be seen as a sequence of two sided tests,
each two-sided test contains
an SPRT and a possible sojourn below zero. The length of the latter being dependent
on the likelihood ratio of the observations.

Define the following two functions:
\[
\Phi(W_k) = W_k + \log L(X_{k+1}),
\]
and
\[
\bar{\Phi}(W_k) = W_k + \mu.
\]
Using these functions we define the stopping time for an SPRT
\begin{equation}
\label{eq:lambda}
\lambdad  \defeq \inf\{n\geq 1: \Phi(W_{n-1}) \notin [0,D], \ W_0=0 \}.
\end{equation}
At the stopping time $\lambdad$ for the SPRT, if the statistic $W_{\lambdad}=x<0$,
then the time spent below zero is equal to
$T(x,0)$, where for $x<y$
\begin{equation}
\label{eq:TXY}
T(x,y,\mu)  \defeq \inf\{n\geq 1: \bar{\Phi}(W_{n-1}) > y, W_0=x\},
\end{equation}
with $T(0,0,\mu)=0$. Note that
\begin{equation}
\label{eq:TXYCeil}
T(x,y,\mu) = \lceil (y-x)/\mu\rceil.
\end{equation}
We also define the stopping time for the two-sided test
\begin{equation}
\label{eq:Lambda}
\Lambdad = \lambdad + T((W_{\lambdad})^{h+},0,\mu) \; \indic_{\{W_{\lambdad} < 0\}}.
\end{equation}
\smallskip
Let $\lambdaInf$ be the variable $\lambdad$ when the threshold $D=\infty$.

\begin{figure}[htb]
\center
%\hspace{3cm}
\includegraphics[width=8cm, height=5cm]{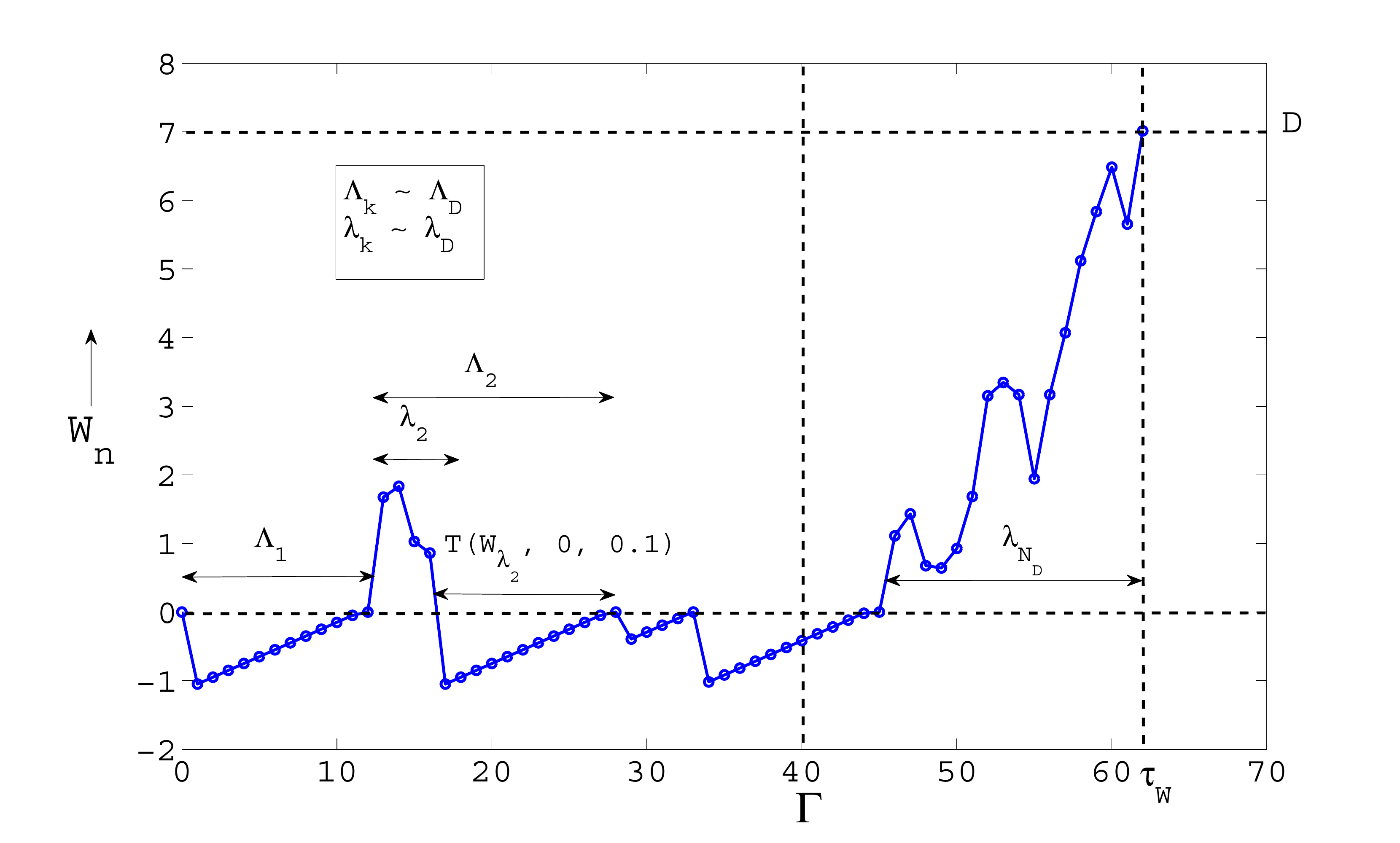}
\caption{Evolution of $W_n$ for $f_0 \sim {\cal N}(0,1)$, $f_1 \sim {\cal N}(0.75,1)$, and $\Gamma=40$, with $D=7$,
$h=\infty$, and $\mu=0.1$. The two-sided tests with distribution of $\Lambdad$ are shown in the figure.
Also shown are the two components of $\Lambdad$: $\lambdad$ and $T(x,y)$.}
\label{fig:DECuSum_evolution_Lambda}
\end{figure}

To summarize, the variables $\lambdad$, $\Lambdad$ and $T(x,y,\mu)$ should be interpreted as follows.
The DE-CuSum algorithm can be seen as a
sequence of two-sided tests, with the stopping time of each two-sided test
distributed accordingly to the law of $\Lambdad$.
Each of the above two-sided tests consists of an SPRT with stopping time distributed accordingly to
the law of $\lambdad$,
and a sojourn of length $T((W_{\lambdad})^{h+},0,\mu)$ corresponding to the time
for which the statistic $W_n$ is below 0, provided
at the stopping time for the SPRT, the accumulated log likelihood is negative, i.e., the event $\{W_{\lambdad} < 0\}$ happens.
See Fig.~\ref{fig:DECuSum_evolution_Lambda}.

The CuSum algorithm
can also be seen as a sequence of SPRTs, with the stopping time of each SPRT
distributed according to the law of $\lambdad$ (see \cite{sieg-seq-anal-book-1985}).

We now provide some results on the mean of $\lambdad$ and $T(x,y,\mu)$
 that will be used in the analysis
of the DE-CuSum algorithm in Sections~\ref{sec:PDCConstraint}, \ref{sec:DECuSumFAR} and \ref{sec:DECuSumCADDWADD}.

If $0< D(f_0 \; \| \; f_1) < \infty$, then
from Corollary 2.4 in \cite{wood-nonlin-ren-th-book-1982},
\begin{equation}\label{eq:EinflambdaInf_finite}
\Expect_\infty[\lambdaInf]< \infty,
\end{equation}
and by Wald's lemma
\begin{equation}\label{eq:EinfWInf_finite}
\Expect_\infty[|W_{\lambdaInf}|] = D(f_0 \; \| \; f_1) \; \Expect_\infty[\lambdaInf] < \infty.
\end{equation}
Also for $h\geq 0$
\begin{equation}\label{eq:EinfWInf_hplus_finite}
\Expect_\infty[|W_{\lambdaInf}^{h+}| ] \leq \Expect_\infty[|W_{\lambdaInf}| ] < \infty,
\end{equation}
where the finiteness follows from \eqref{eq:EinfWInf_finite}.

The lemma below shows that the quantity $\Expect_\infty[\lambdad | W_{\lambdad} < 0]$
is finite for every $D$ and provides a finite upper bound to it that is not a function of the threshold $D$.
This result will be used in the $\PDC$ analysis in Section~\ref{sec:PDCConstraint}.
\smallskip
\begin{lemma}\label{thm:lambdaDUpperBound}
If $0 < D(f_0 \;||\; f_1)< \infty$, then for any $D$, $\Expect_\infty[\lambdad | W_{\lambdad} < 0]$ is
well defined and finite:
\[\Expect_\infty[\lambdad | W_{\lambdad} < 0] \leq \frac{\Expect_\infty[\lambdaInf]}{\Prob_\infty(L(X_1) < 0 )} < \infty.\]
\end{lemma}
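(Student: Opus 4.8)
The plan is to exploit the renewal structure of the statistic $W_n$ under $\Prob_\infty$. Observe that $\lambdad$ is the stopping time of an SPRT started at $W_0=0$ with boundaries $0$ and $D$, and the event $\{W_{\lambdad} < 0\}$ means the SPRT terminates by undershooting the lower boundary. The key observation is that a necessary condition for the SPRT to ever dip below zero (equivalently, for $W_{\lambdad}<0$ to be possible) is that the first increment $\log L(X_1)$ be negative: if $\log L(X_1)\ge 0$ then $W_1 \ge 0$, and in fact the trajectory that ends below zero must at some earliest point have had a negative step. More directly, I will instead bound things by relating $\lambdad$ to $\lambdaInf$, the same SPRT with $D=\infty$ (so a one-sided test with only the lower boundary $0$).

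First I would note the pathwise inequality $\lambdad \le \lambdaInf$: removing the upper boundary $D$ can only delay stopping, since the walk $\sum_{k=1}^n \log L(X_k)$ that stops the first time it leaves $[0,D]$ (starting from $0$, i.e. the first time it goes negative or exceeds $D$) stops no later than the walk that waits until it goes negative. Hence on every sample path, $\lambdad \le \lambdaInf$, and therefore $\Expect_\infty[\lambdad \,\indic_{\{W_{\lambdad}<0\}}] \le \Expect_\infty[\lambdaInf \,\indic_{\{W_{\lambdad}<0\}}] \le \Expect_\infty[\lambdaInf]$, which is finite by \eqref{eq:EinflambdaInf_finite}. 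Second, I would lower-bound the conditioning probability: $\Prob_\infty(W_{\lambdad}<0) \ge \Prob_\infty(\log L(X_1) < 0)$, because whenever the very first increment is negative the SPRT stops immediately at time $1$ with $W_1 = \log L(X_1) < 0$, so $\{\log L(X_1)<0\} \subseteq \{W_{\lambdad}<0\}$. Combining,
\[
\Expect_\infty[\lambdad \mid W_{\lambdad}<0]
= \frac{\Expect_\infty[\lambdad\,\indic_{\{W_{\lambdad}<0\}}]}{\Prob_\infty(W_{\lambdad}<0)}
\le \frac{\Expect_\infty[\lambdaInf]}{\Prob_\infty(\log L(X_1)<0)} < \infty,
\]
where the denominator is strictly positive because $D(f_0\|f_1) = \Expect_\infty[\log(f_1(X)/f_0(X))] < 0$ forces $\Prob_\infty(\log L(X_1)<0)>0$, and it is also the finiteness in \eqref{eq:EinflambdaInf_finite} that makes the numerator finite; well-definedness of the conditional expectation follows from positivity of the denominator.

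The main obstacle is a matter of care rather than depth: making sure the event descriptions line up with the definition of $\lambdad$ in \eqref{eq:lambda}, which is phrased via $\Phi(W_{n-1})\notin[0,D]$ with $W_0=0$ — so one must check that "$W_{\lambdad}<0$" is exactly the lower-exit event and that the first-step argument $\{\log L(X_1)<0\}\subseteq\{W_{\lambdad}<0\}$ is valid under this indexing (it is, since $\Phi(W_0)=\log L(X_1)$, so if this is negative the test stops at $n=1$ with $W_{\lambdad}=\log L(X_1)$). I would also remark that the statement in the lemma writes the denominator as $\Prob_\infty(L(X_1)<0)$, which should read $\Prob_\infty(\log L(X_1)<0)$ — i.e. $\Prob_\infty(L(X_1)<1)$ — since likelihood ratios are nonnegative; this is a harmless typo and the argument above supplies the correct quantity.
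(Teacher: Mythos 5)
Your proof is correct and follows essentially the same route as the paper's: both arguments rest on the two observations that on the lower-exit event $\lambdad$ coincides with (hence is dominated by) $\lambdaInf$, so the numerator is at most $\Expect_\infty[\lambdaInf]$, and that $\Prob_\infty(W_{\lambdad}<0)\ge\Prob_\infty(\log L(X_1)<0)>0$ because a negative first increment forces immediate lower exit; the paper merely packages this via a partition into events $\mathcal{A},\mathcal{B},\mathcal{C}$ rather than your direct pathwise inequality. Your side remark that the denominator should read $\Prob_\infty(\log L(X_1)<0)$ is also right --- the paper systematically writes $L(X_1)$ for the log-likelihood ratio in these expressions.
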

\begin{proof}
The proof of the first inequality is provided in the appendix. The second inequality is
true by \eqref{eq:EinflambdaInf_finite} and because $\Prob_\infty(L(X_1)<0)>0$.
\end{proof}
\smallskip

The following lemma provides upper and lower bounds on
$\Expect_\infty[T((W_{\lambdad})^{h+},0,\mu)| W_{\lambdad} < 0]$ that are not a function of the threshold $D$.
The upper bound will be useful in the $\FAR$ analysis in Section~\ref{sec:DECuSumFAR},
and the lower bound will be useful in the $\PDC$ analysis in Section~\ref{sec:PDCConstraint}.
 Define
\begin{equation}\label{eq:TxyLowerBound}
T_L^{(\infty)}(h,\mu) = \frac{\Expect_\infty[|L(X_1)^{h+}| \; \Big| \; L(X_1)<0]}{\mu}  \; \Prob_\infty(L(X_1) < 0),
\end{equation}
 and
\begin{equation}\label{eq:TxyUpperBound}
T_U^{(\infty)}(h,\mu) = \frac{\Expect_\infty[|W_{\lambdaInf}^{h+}|] }{\mu \; \Prob_\infty(L(X_1) < 0) } + 1.
\end{equation}
\smallskip
\begin{lemma}\label{thm:EInfTxy_UpperLower}
If $0< D(f_0 \; \| \; f_1) < \infty$ and $\mu > 0$, then %, $h>0$.
\begin{equation}
\begin{split}
T_L^{(\infty)}(h,& \mu)\; \\
& \leq \; \Expect_\infty[T((W_{\lambdad})^{h+},0,\mu) \Big| W_{\lambdad} < 0] \; \\
&\hspace{5cm} \leq \; T_U^{(\infty)}(h,\mu).
\end{split}
\end{equation}
Moreover, $T_U^{(\infty)}(h, \mu) < \infty$, and if $h>0$, then $T_L^{(\infty)}(h, \mu) > 0$.
\end{lemma}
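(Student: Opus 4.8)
The plan is to exploit the closed form $T(x,y,\mu) = \lceil (y-x)/\mu \rceil$ from \eqref{eq:TXYCeil}, which at $y=0$ and $x = (W_{\lambdad})^{h+}$ gives $T((W_{\lambdad})^{h+},0,\mu) = \lceil |W_{\lambdad}^{h+}|/\mu \rceil$ on the event $\{W_{\lambdad}<0\}$ (recalling $W_{\lambdad}^{h+} = \max\{W_{\lambdad},-h\}$, which is negative there). Using the sandwich $|W_{\lambdad}^{h+}|/\mu \le \lceil |W_{\lambdad}^{h+}|/\mu \rceil \le |W_{\lambdad}^{h+}|/\mu + 1$, the whole lemma reduces to controlling $\Expect_\infty[\,|W_{\lambdad}^{h+}|\, \big|\, W_{\lambdad}<0\,]$, i.e.\ showing
\[
\Expect_\infty[|L(X_1)^{h+}|\,\big|\,L(X_1)<0]\;\Prob_\infty(L(X_1)<0)\;\le\;\Expect_\infty[|W_{\lambdad}^{h+}|\,\big|\,W_{\lambdad}<0]\;\le\;\frac{\Expect_\infty[|W_{\lambdaInf}^{h+}|]}{\Prob_\infty(L(X_1)<0)}.
\]

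For the upper bound I would drop the threshold $D$ from the SPRT: the event $\{\lambdad = \lambdaInf\}$ (the random walk exits $[0,D]$ from below before ever reaching $D$) contains the event $\{\lambdaInf = \lambdad,\ W_{\lambdad}<0\}$, and more simply one can bound $\Expect_\infty[|W_{\lambdad}^{h+}|\,\indic_{\{W_{\lambdad}<0\}}]$. The key observation is that whenever the SPRT stops below zero at time $\lambdad$, the statistic was in $[0,D]$ at time $\lambdad-1$, so $W_{\lambdad}$ equals that value plus one log-likelihood increment and is governed by the \emph{same} local dynamics as the $D=\infty$ walk stopped at its first descent below $0$; hence $\Expect_\infty[|W_{\lambdad}^{h+}|\,\indic_{\{W_{\lambdad}<0\}}] \le \Expect_\infty[|W_{\lambdaInf}^{h+}|]$, which is finite by \eqref{eq:EinfWInf_hplus_finite}. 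Dividing by $\Prob_\infty(W_{\lambdad}<0) \ge \Prob_\infty(L(X_1)<0)$ (the walk can descend below $0$ already on the first step) yields the stated $T_U^{(\infty)}$, and finiteness is immediate.

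For the lower bound, condition on the first increment: on $\{L(X_1)<0\}$ the SPRT takes at least the step $W_1 = (\log L(X_1))^{h+} < 0$ is not quite right since $W_1$ could still be nonnegative only if the increment is nonnegative — so on $\{L(X_1)<0\}$ we have $\lambdad=1$, $W_{\lambdad} = (\log L(X_1))^{h+}$, and this already contributes $\Expect_\infty[|L(X_1)^{h+}|\,\indic_{\{L(X_1)<0\}}]$ to $\Expect_\infty[|W_{\lambdad}^{h+}|\,\indic_{\{W_{\lambdad}<0\}}]$; all other sample paths with $W_{\lambdad}<0$ contribute a nonnegative amount, giving the lower bound after dividing by $\Prob_\infty(W_{\lambdad}<0)$, which equals... here one must be careful: dividing by $\Prob_\infty(W_{\lambdad}<0)$ rather than multiplying. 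The cleaner route is to write $\Expect_\infty[|W_{\lambdad}^{h+}|\,\big|\,W_{\lambdad}<0] = \Expect_\infty[|W_{\lambdad}^{h+}|\,\indic_{\{W_{\lambdad}<0\}}]/\Prob_\infty(W_{\lambdad}<0) \ge \Expect_\infty[|L(X_1)^{h+}|\,\indic_{\{L(X_1)<0\}}] \cdot \frac{\Prob_\infty(L(X_1)<0)}{\Prob_\infty(W_{\lambdad}<0)} \cdot \frac{1}{\Prob_\infty(L(X_1)<0)}$, and since $\Prob_\infty(W_{\lambdad}<0)\le 1$ we get the bound with $\Prob_\infty(L(X_1)<0)$ as the multiplicative factor, as in \eqref{eq:TxyLowerBound}. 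Strict positivity of $T_L^{(\infty)}$ when $h>0$ follows because $\Prob_\infty(L(X_1)<0)>0$ and, conditioned on $\{L(X_1)<0\}$, $|L(X_1)^{h+}| = \min\{|\log L(X_1)|, h\} > 0$ with positive probability (indeed a.s.).

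The main obstacle I anticipate is the first inequality of the upper bound — rigorously justifying $\Expect_\infty[|W_{\lambdad}^{h+}|\,\indic_{\{W_{\lambdad}<0\}}] \le \Expect_\infty[|W_{\lambdaInf}^{h+}|]$. The subtlety is that truncating the boundary at $D$ versus $D=\infty$ changes which paths are counted, and one needs a coupling or a first-passage decomposition argument (conditioning on the last time the walk is at a nonnegative level before dropping below zero) to show the overshoot-below-zero at the SPRT stopping time is stochastically dominated by the corresponding overshoot for the unconstrained walk. This is precisely the kind of renewal/ladder-variable argument the paper defers to the appendix for Lemma~\ref{thm:lambdaDUpperBound}, so I would handle it the same way.
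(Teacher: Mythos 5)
Your proposal is correct and follows essentially the same route as the paper: sandwich the ceiling via $|W_{\lambdad}^{h+}|/\mu \le T((W_{\lambdad})^{h+},0,\mu) \le |W_{\lambdad}^{h+}|/\mu + 1$, then bound $\Expect_\infty[|W_{\lambdad}^{h+}| \mid W_{\lambdad}<0]$ from above by $\Expect_\infty[|W_{\lambdaInf}^{h+}|]/\Prob_\infty(\log L(X_1)<0)$ and from below by restricting to the first-step event $\{\log L(X_1)<0\}$. The one obstacle you flag at the end is not actually an obstacle: on $\{W_{\lambdad}<0\}$ the walk exits $[0,D]$ from below without ever having touched $D$, so pathwise $\lambdad=\lambdaInf$ and $W_{\lambdad}=W_{\lambdaInf}$ on that event, and $\Expect_\infty[|W_{\lambdad}^{h+}|\,\indic_{\{W_{\lambdad}<0\}}]\le\Expect_\infty[|W_{\lambdaInf}^{h+}|]$ is just dropping an indicator rather than a coupling or stochastic-dominance claim --- this is precisely the $\mathcal{A}\cup\mathcal{B}$ event decomposition the paper reuses from the proof of Lemma~\ref{thm:lambdaDUpperBound}.
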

\begin{proof}
The proof is provided in the appendix.
\end{proof}
\smallskip

The next lemma shows that the mean of $\Expect_1[T(W_{\lambdad}^{h+} ,0,\mu) | W_{\lambdad} < 0]$ is finite
under $\Prob_1$ and obtains a finite upper bound to it that is not a function of $D$.
This result will be used for the $\CADD$ and $\WADD$ analysis in Section~\ref{sec:DECuSumCADDWADD}.
Let
\begin{equation}\label{eq:E1TxyUpperBound}
T_U^{(1)}(h,\mu) = \frac{\Expect_\infty[|W_{\lambdaInf}^{h+}|] }{\mu \; \Prob_1(L(X_1) < 0) } + 1.
\end{equation}
\smallskip
\begin{lemma}\label{thm:E1Txy}
If $0< D(f_0 \; \| \; f_1) < \infty$ and $\mu > 0$, then %$h>0$.
\[
\Expect_1[T(W_{\lambdad}^{h+} ,0,\mu) | W_{\lambdad} < 0]
 \; \; \leq \; \; T_U^{(1)}(h,\mu)\; \;  < \; \; \infty.
\]
\end{lemma}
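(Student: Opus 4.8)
The plan is to bound the sojourn time $T(W_{\lambdad}^{h+},0,\mu)$ by a multiple of the truncated undershoot $|W_{\lambdad}^{h+}|$ of the SPRT at $\lambdad$, and then transfer the resulting $\Prob_1$-expectation of that undershoot to a $\Prob_\infty$-expectation by a change of measure; the latter is already controlled via \eqref{eq:EinfWInf_hplus_finite}. The essential new ingredient, compared with Lemma~\ref{thm:EInfTxy_UpperLower} where everything lives under $\Prob_\infty$, is precisely this change of measure. First, by \eqref{eq:TXYCeil}, $T(W_{\lambdad}^{h+},0,\mu)=\lceil -W_{\lambdad}^{h+}/\mu\rceil$, and on $\{W_{\lambdad}<0\}$ one has $W_{\lambdad}^{h+}=\max\{W_{\lambdad},-h\}\in[-h,0)$, so $\lceil -W_{\lambdad}^{h+}/\mu\rceil\le |W_{\lambdad}^{h+}|/\mu+1$ and hence
\[
\Expect_1[T(W_{\lambdad}^{h+},0,\mu)\mid W_{\lambdad}<0]\;\le\;\frac{\Expect_1\big[|W_{\lambdad}^{h+}|\,\indic_{\{W_{\lambdad}<0\}}\big]}{\mu\,\Prob_1(W_{\lambdad}<0)}+1 .
\]
For the denominator, $\{W_1<0\}=\{\log L(X_1)<0\}\subseteq\{W_{\lambdad}<0\}$, so $\Prob_1(W_{\lambdad}<0)\ge\Prob_1(\log L(X_1)<0)>0$, the strict positivity holding because $0<D(f_0\|f_1)<\infty$ forces $f_0\ll f_1$ and $f_0\ne f_1$ (the same fact used for $\Prob_\infty$ in the proof of Lemma~\ref{thm:lambdaDUpperBound}); this quantity is the one appearing in \eqref{eq:E1TxyUpperBound}.

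Next I would pass from the finite-threshold SPRT to the $D=\infty$ random walk. On $\{W_{\lambdad}<0\}$ the SPRT path stays in $[0,D]$ and exits below $0$, so it never reaches $D$; hence it coincides with the path having no upper threshold, giving $\lambdad=\lambdaInf$ and $W_{\lambdad}=W_{\lambdaInf}$ on $\{W_{\lambdad}<0\}$, and in particular $\{W_{\lambdad}<0\}\subseteq\{\lambdaInf<\infty\}$. Therefore
\[
\Expect_1\big[|W_{\lambdad}^{h+}|\,\indic_{\{W_{\lambdad}<0\}}\big]\;\le\;\Expect_1\big[|W_{\lambdaInf}^{h+}|\,\indic_{\{\lambdaInf<\infty\}}\big]\;=\;\sum_{n\ge1}\Expect_1\big[|W_{\lambdaInf}^{h+}|\,\indic_{\{\lambdaInf=n\}}\big].
\]

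The change of measure is carried out slot by slot. While $\lambdaInf\ge n$ the statistic is un-truncated, so $W_{n-1}=\sum_{k=1}^{n-1}\log L(X_k)$ and the likelihood ratio of $(X_1,\dots,X_n)$ under $\Prob_1$ versus $\Prob_\infty$ equals $\prod_{k=1}^n L(X_k)=\exp\big(\sum_{k=1}^n\log L(X_k)\big)$; on $\{\lambdaInf=n\}$ this exponent is $\Phi(W_{n-1})<0$, so the ratio is $\le 1$ there (and genuinely finite, since on $\{\lambdaInf=n\}$ no sample can lie in $\{f_0=0\}$ — otherwise $\log L=+\infty$ and the walk never returns below $0$). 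Since $|W_{\lambdaInf}^{h+}|\,\indic_{\{\lambdaInf=n\}}$ is $\sigma(X_1,\dots,X_n)$-measurable,
\[
\Expect_1\big[|W_{\lambdaInf}^{h+}|\,\indic_{\{\lambdaInf=n\}}\big]=\Expect_\infty\Big[|W_{\lambdaInf}^{h+}|\,\indic_{\{\lambdaInf=n\}}\prod_{k=1}^n L(X_k)\Big]\le\Expect_\infty\big[|W_{\lambdaInf}^{h+}|\,\indic_{\{\lambdaInf=n\}}\big].
\]
Summing over $n$ and using $\Prob_\infty(\lambdaInf<\infty)=1$ (from \eqref{eq:EinflambdaInf_finite}) gives $\Expect_1[|W_{\lambdad}^{h+}|\,\indic_{\{W_{\lambdad}<0\}}]\le\Expect_\infty[|W_{\lambdaInf}^{h+}|]<\infty$ by \eqref{eq:EinfWInf_hplus_finite}. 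Substituting this and the denominator bound into the first display yields exactly $T_U^{(1)}(h,\mu)$ of \eqref{eq:E1TxyUpperBound}, together with finiteness.

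The main obstacle is the change-of-measure step: since only $D(f_0\|f_1)$, and not $D(f_1\|f_0)$, is assumed finite, $\Prob_1$ need not be absolutely continuous with respect to $\Prob_\infty$, so the change of measure is legitimate only because the event $\{\lambdaInf=n\}$ automatically avoids $\{f_0=0\}$; one must also note that $\lambdaInf$ may be infinite under $\Prob_1$ (positive drift) but that this only helps, as we restrict to $\{\lambdaInf<\infty\}$. The identification $\lambdad=\lambdaInf$, $W_{\lambdad}=W_{\lambdaInf}$ on $\{W_{\lambdad}<0\}$ is the bookkeeping step that lets us import the already-established overshoot bound for the $D=\infty$ random walk; everything else (the ceiling estimate, positivity of $\Prob_1(\log L(X_1)<0)$) is routine.
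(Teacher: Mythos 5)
Your proposal is correct and follows essentially the same route as the paper's proof: the ceiling bound on $T(\cdot,0,\mu)$, reduction of the finite-$D$ conditional expectation to the $D=\infty$ walk with denominator $\Prob_1(\log L(X_1)<0)$, and a change of measure that turns $\Expect_1[|W_{\lambdaInf}^{h+}|\,;\lambdaInf<\infty]$ into $\Expect_\infty[|W_{\lambdaInf}^{h+}|\,e^{W_{\lambdaInf}}]\le\Expect_\infty[|W_{\lambdaInf}^{h+}|]$, which the paper accomplishes by citing Wald's likelihood ratio identity where you verify it slot by slot. Your explicit observation that $\{\lambdaInf=n\}$ avoids $\{f_0=0\}$, so the change of measure is legitimate without assuming $f_1\ll f_0$, is a careful touch the paper leaves implicit, but it does not change the argument.
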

\begin{proof}
The proof is provided in the appendix.
\end{proof}
\smallskip

%In the following, for convenience, we will often refer to $\Psic$ by $\tauc$ and $\Psiw$ by $\tauw$.

\subsection{Meeting the $\PDC$ constraint}
\label{sec:PDCConstraint}
In this section we show that the $\PDC$ metric is well defined for the DE-CuSum algorithm.
%We also show that a finite value of $h$ i.e., $h < \infty$ is needed to meet $\PDC$ constraints arbitrary
%close to 1.
In general $\PDC(\Psiw)$ will depend on both $D$ and $\mu$ (apart from the obvious dependence on $f_0$ and $f_1$).
But, we show that it is possible to choose a value of $\mu$ that ensures that the $\PDC$ constraint of $\beta$
can be met independent of the choice of $D$.
The latter would be crucial to the asymptotic optimality proof of the DE-CuSum algorithm provided later
in Section~\ref{sec:AsymptoticOpt}.
\smallskip
\begin{theorem}\label{thm:PDCExpr}
\label{thm:PDC}
For fixed values of $D$, $h$, and $\mu>0$, if $0 < D(f_0 \;||\; f_1)< \infty$, then
\begin{equation}\label{eq:PDCExpr}
\begin{split}
\PDC &(\Psiw(D,\mu, h)) = \\
&\frac{\Expect_\infty[\lambdad | W_{\lambdad} < 0]}{\Expect_\infty[\lambdad | W_{\lambdad} < 0] + \Expect_\infty[T((W_{\lambdad})^{h+},0,\mu) \; \Big| \; W_{\lambdad} < 0]}.
\end{split}
\end{equation}
\end{theorem}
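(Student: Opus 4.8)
The plan is to recognize the DE-CuSum statistic $W_n$ as a regenerative process whose regeneration epochs are the ends of successive two-sided tests, and then apply the renewal-reward theorem to the quantity $\frac1n\Expect_\infty[\sum_{k=1}^{n-1}M_k\mid\tau\ge n]$. First I would observe that under $\Prob_\infty$ the event $\{\tau_{\scriptscriptstyle\text{W}}\ge n\}$ plays essentially no role asymptotically: since $D$ is fixed and the increments have negative mean under $f_0$ (because $D(f_0\|f_1)>0$), one has $\Expect_\infty[\tau_{\scriptscriptstyle\text{W}}]=\infty$, so conditioning on $\{\tau_{\scriptscriptstyle\text{W}}\ge n\}$ becomes vacuous in the limit; more carefully, I would argue that the conditional law of the process on $\{\tau_{\scriptscriptstyle\text{W}}\ge n\}$, restricted to times $\le n-1$, coincides with the unconditional law of a version of $W$ that simply never crosses $D$ (i.e.\ run the algorithm with $D=\infty$ for the purpose of the pre-$\tau$ sample path, or equivalently note the stopping rule only truncates, never alters, the pre-stopping trajectory). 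So the $\limsup$ in the definition of $\PDC$ equals $\lim_n \frac1n\Expect_\infty[\sum_{k=1}^{n-1}M_k]$ computed for the $D=\infty$ chain.

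Next I would set up the regenerative structure explicitly. Under $\Prob_\infty$ with $D=\infty$, the chain $W_n$ returns to $0$ at the end of each two-sided test: a test consists of an SPRT phase of length $\lambda_{\scriptscriptstyle\text{D}}$ (here with $D=\infty$ so this is really the one-sided crossing, but the bookkeeping is identical) during which $M_k=1$ throughout, followed, on the event $\{W_{\lambda_{\scriptscriptstyle\text{D}}}<0\}$, by a skip phase of length $T((W_{\lambda_{\scriptscriptstyle\text{D}}})^{h+},0,\mu)$ during which $M_k=0$, after which $W$ is back at $0$ and the process renews. Thus the successive test lengths are i.i.d.\ copies of $\Lambda_{\scriptscriptstyle\text{D}}=\lambda_{\scriptscriptstyle\text{D}}+T((W_{\lambda_{\scriptscriptstyle\text{D}}})^{h+},0,\mu)\indic_{\{W_{\lambda_{\scriptscriptstyle\text{D}}}<0\}}$, and within each test the number of samples taken is exactly $\lambda_{\scriptscriptstyle\text{D}}$. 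By the renewal-reward theorem (the strong law version, then take expectations using uniform integrability, or directly the elementary-renewal version for the expected reward),
\[
\lim_n \frac1n\Expect_\infty\Bigl[\sum_{k=1}^{n-1}M_k\Bigr] = \frac{\Expect_\infty[\lambda_{\scriptscriptstyle\text{D}}]}{\Expect_\infty[\Lambda_{\scriptscriptstyle\text{D}}]} = \frac{\Expect_\infty[\lambda_{\scriptscriptstyle\text{D}}]}{\Expect_\infty[\lambda_{\scriptscriptstyle\text{D}}] + \Expect_\infty[T((W_{\lambda_{\scriptscriptstyle\text{D}}})^{h+},0,\mu)\indic_{\{W_{\lambda_{\scriptscriptstyle\text{D}}}<0\}}]}.
\]
Finally I would rewrite numerator and denominator by decomposing on $\{W_{\lambda_{\scriptscriptstyle\text{D}}}<0\}$ versus its complement: on the complement the sojourn term is $0$, and $\Expect_\infty[T((W_{\lambda_{\scriptscriptstyle\text{D}}})^{h+},0,\mu)\indic_{\{W_{\lambda_{\scriptscriptstyle\text{D}}}<0\}}]=\Prob_\infty(W_{\lambda_{\scriptscriptstyle\text{D}}}<0)\,\Expect_\infty[T(\cdots)\mid W_{\lambda_{\scriptscriptstyle\text{D}}}<0]$; dividing numerator and denominator through by $\Prob_\infty(W_{\lambda_{\scriptscriptstyle\text{D}}}<0)$ and using that $\Expect_\infty[\lambda_{\scriptscriptstyle\text{D}}]=\Expect_\infty[\lambda_{\scriptscriptstyle\text{D}}\mid W_{\lambda_{\scriptscriptstyle\text{D}}}<0]\Prob_\infty(W_{\lambda_{\scriptscriptstyle\text{D}}}<0)+\Expect_\infty[\lambda_{\scriptscriptstyle\text{D}}\mid W_{\lambda_{\scriptscriptstyle\text{D}}}\ge D]\Prob_\infty(W_{\lambda_{\scriptscriptstyle\text{D}}}\ge D)$ — wait, this last substitution does not literally produce the stated formula, so instead I would directly keep the ratio in the form with the indicator and then simply note $\Expect_\infty[T((W_{\lambda_{\scriptscriptstyle\text{D}}})^{h+},0,\mu)\indic_{\{W_{\lambda_{\scriptscriptstyle\text{D}}}<0\}}] = \Expect_\infty[T((W_{\lambda_{\scriptscriptstyle\text{D}}})^{h+},0,\mu)\mid W_{\lambda_{\scriptscriptstyle\text{D}}}<0]\cdot\Prob_\infty(W_{\lambda_{\scriptscriptstyle\text{D}}}<0)$ and that $\Expect_\infty[\lambda_{\scriptscriptstyle\text{D}}] = \Expect_\infty[\lambda_{\scriptscriptstyle\text{D}}\mid W_{\lambda_{\scriptscriptstyle\text{D}}}<0]\Prob_\infty(W_{\lambda_{\scriptscriptstyle\text{D}}}<0) + (\text{nonneg.\ term})$; comparing with the claimed identity, one sees the paper is implicitly using the convention that the reward and cycle length are both measured \emph{only over tests that end below zero} (i.e.\ the regeneration is taken at the end of a skip phase, so cycles ending with $W_{\lambda_{\scriptscriptstyle\text{D}}}\ge 0$ are absorbed into the SPRT phase of the next test). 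With that regeneration choice the per-cycle sample count is $\Expect_\infty[\lambda_{\scriptscriptstyle\text{D}}\mid W_{\lambda_{\scriptscriptstyle\text{D}}}<0]$ (summing the SPRT lengths of the run of $\ge 0$-ending tests plus the final $<0$-ending one has the same mean by Wald-type cycle accounting) and the per-cycle skip count is $\Expect_\infty[T((W_{\lambda_{\scriptscriptstyle\text{D}}})^{h+},0,\mu)\mid W_{\lambda_{\scriptscriptstyle\text{D}}}<0]$, yielding exactly \eqref{eq:PDCExpr}.

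The finiteness of all the expectations appearing — needed both to apply renewal-reward and to conclude $\PDC$ is well defined (not $0/0$ or $\infty/\infty$) — is exactly what Lemmas~\ref{thm:lambdaDUpperBound} and \ref{thm:EInfTxy_UpperLower} provide, uniformly in $D$. The main obstacle, and the step that requires the most care, is the first one: justifying that conditioning on $\{\tau_{\scriptscriptstyle\text{W}}\ge n\}$ can be dropped, and more generally justifying the interchange of $\limsup$, expectation, and the renewal-reward limit. I would handle this by a uniform-integrability argument: the number of samples in a single two-sided test, $\lambda_{\scriptscriptstyle\text{D}}$, is dominated by $\lambda_\infty$ which has finite $\Prob_\infty$-mean, the skip lengths are controlled by Lemma~\ref{thm:EInfTxy_UpperLower}, and the renewal function grows linearly; together these let me pass from the strong law (renewal-reward a.s.) to convergence of the normalized expectation, and the event $\{\tau_{\scriptscriptstyle\text{W}}\ge n\}$ is dealt with by noting that on this event the pre-$n$ trajectory is distributed exactly as the unconditioned $D=\infty$ trajectory truncated, so the conditional and unconditional normalized sample counts differ by a term that vanishes as $n\to\infty$.
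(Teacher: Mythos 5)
Your overall renewal--reward strategy is the right one and matches the paper's in spirit, but there is a genuine gap at your first step: the conditioning on $\{\tauw \ge n\}$ is \emph{not} asymptotically vacuous, and treating it as such leads to the wrong formula. Your premise $\Expect_\infty[\tauw]=\infty$ is false --- for fixed finite $D$ the CuSum, and hence the DE-CuSum, crosses $D$ in finite expected time under $\Prob_\infty$ (this is exactly why $\FAR>0$; see Theorem~\ref{thm:FARMean}) --- so $\Prob_\infty(\tauw\ge n)\to 0$ and you are conditioning on a vanishing event. That conditioning is precisely what produces the \emph{conditional} expectations in \eqref{eq:PDCExpr}: an SPRT cycle $\lambdad$ exits $[0,D]$ either below $0$ (followed by a skip phase) or above $D$ (which terminates the algorithm), so on $\{\tauw\ge n\}$ every SPRT completed before time $n$ must have exited at the bottom, and its duration therefore has the law of $\lambdad$ conditioned on $\{W_{\lambdad}<0\}$ --- not the law of $\lambdad$, and not the law of $\lambdaInf$. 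Your unconditional $D=\infty$ computation instead yields $\Expect_\infty[\lambdaInf]/\bigl(\Expect_\infty[\lambdaInf]+\Expect_\infty[T((W_{\lambdaInf})^{h+},0,\mu)]\bigr)$, which is the $D\to\infty$ limit appearing in \eqref{eq:ratioOfFARs}, not the fixed-$D$ identity the theorem asserts.

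You do notice the mismatch mid-proof, but the patch you offer --- that cycles ending with $W_{\lambdad}\ge 0$ are ``absorbed into the SPRT phase of the next test'' --- misreads the dynamics: an SPRT ending above $D$ has no next test, it ends the procedure, and there is no Wald-type aggregation to perform. The paper's proof avoids all of this by building the alternating renewal process with the correct conditional laws from the outset: $\{V_k\}$ i.i.d.\ with the law of $\lambdad$ given $\{W_{\lambdad}<0\}$, $\{U_k\}$ i.i.d.\ with the law of $T((W_{\lambdad})^{h+},0,\mu)$ given $\{W_{\lambdad}<0\}$, the observation that on $\{\tauw\ge n\}$ the pre-$n$ sample path is distributed exactly as this process, and then renewal reward; Lemmas~\ref{thm:lambdaDUpperBound} and~\ref{thm:EInfTxy_UpperLower} supply the finiteness of both conditional means, as you correctly anticipated. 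To repair your argument you would need to replace your first step by this identification of the conditional law; the rest of your renewal--reward machinery then goes through.
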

\begin{proof}
Consider an alternating renewal process $\{V_n, U_n\}$, i.e, a renewal process with renewal times
$\{V_1, V_1+U_1, V_1+U_1+V_2, \cdots\}$, with $\{V_n\}$ i.i.d. with distribution of $\lambdad$ conditioned
on $\{W_{\lambdad} < 0\}$, and $\{U_n\}$ i.i.d. with distribution of $T((W_{\lambdad})^{h+},0,\mu)$ conditioned
on $\{W_{\lambdad} < 0\}$. Thus,
\[
\Expect_\infty[V_1] = \Expect_\infty[\lambdad | W_{\lambdad} < 0],
\]
and
\[
\Expect_\infty[U_1] = \Expect_\infty[T((W_{\lambdad})^{h+},0,\mu)\; \Big| \; W_{\lambdad} < 0].
\]
Both the means are finite by Lemma~\ref{thm:lambdaDUpperBound} and Lemma~\ref{thm:EInfTxy_UpperLower}.

At time $n$ assign a reward of $R_n=1$ if the renewal cycle in progress has the law of $V_1$, set $R_n=0$ otherwise.
Then by renewal reward theorem,
\[\frac{1}{n}\Expect_\infty\left[\sum_{k=1}^{n-1} R_k \right] \to \frac{\Expect_\infty[V_1]}{\Expect_\infty[V_1] + \Expect_\infty[U_1]}\]
On $\{\tauw \geq n\}$, the total number of observations taken till time $n-1$
has the same distribution as the total reward for the alternating renewal process defined above.
Hence, the expected value of the average reward for both the sequences must have the same limit:
\begin{equation}
\begin{split}
\lim_{n \to \infty}& \frac{1}{n} \Expect_n \left[\sum_{k=1}^{n-1} M_k \Big| \tauw \geq n\right] \\
&= \frac{\Expect_\infty[\lambdad | W_{\lambdad} < 0]}{\Expect_\infty[\lambdad | W_{\lambdad} < 0] + \Expect_\infty[T((W_{\lambdad})^{h+},0,\mu)| W_{\lambdad} < 0]}.
\end{split}
\end{equation}
\end{proof}
\begin{remark}
If $h=0$, then $\Expect_\infty[T((W_{\lambdad})^{h+},0,\mu)| W_{\lambdad} < 0]=0$ and we get the $\PDC$
of the CuSum algorithm that is equal to 1.
\end{remark}
\smallskip

As can be seen from \eqref{eq:PDCExpr}, $\PDC$ is a function of $D$ as well as that of $h$ and $\mu$.
We now show that for any $D$ and $h>0$, the DE-CuSum algorithm can be designed to meet any $\PDC$ constraint of $\beta$.
Moreover, for a given $h>0$, a value of $\mu$ can always be selected such that
the $\PDC$ constraint of $\beta$ is met independent of the choice of $D$.
The latter is convenient not only from a practical point of view, but will also help in the
asymptotic optimality proof of the DE-CuSum algorithm in Section~\ref{sec:AsymptoticOpt}.

\begin{theorem}\label{thm:PDCmuStar}
For the DE-CuSum algorithm, for any choice of $D$ and $h>0$, if $0 < D(f_0 \;||\; f_1)< \infty$, then
we can always choose a value of $\mu$ to meet any given $\PDC$ constraint of $\beta$.
Moreover, for any fixed value of $h>0$,
there exists a value of $\mu$ say $\mu^*(h)$ such that for every $D$,
\[\PDC(\Psiw(D,\mu^*, h)) \leq \beta. \]
In fact any $\mu$ that satisfies
\[\mu \leq \frac{\Expect_\infty[|L(X_1)^{h+}| \; \Big| \; L(X_1)<0] \; \Prob_\infty(L(X_1) < 0)^2}{\Expect_\infty[\lambdaInf]}\frac{\beta}{1-\beta},\]
can be used as $\mu^*$.
\end{theorem}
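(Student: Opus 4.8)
The plan is to read off the value of $\PDC$ from Theorem~\ref{thm:PDC} and then strip away its dependence on the stopping threshold $D$ using the $D$-free bounds already established in Lemmas~\ref{thm:lambdaDUpperBound} and \ref{thm:EInfTxy_UpperLower}. Write $a(D) = \Expect_\infty[\lambdad \mid W_{\lambdad} < 0]$ and $x(D,\mu) = \Expect_\infty[T((W_{\lambdad})^{h+},0,\mu) \mid W_{\lambdad} < 0]$; both are finite and positive (the latter by Lemma~\ref{thm:EInfTxy_UpperLower} together with $h>0$). By \eqref{eq:PDCExpr}, $\PDC(\Psiw(D,\mu,h)) = a(D)/(a(D)+x(D,\mu))$, and since $t \mapsto a/(a+t)$ is strictly decreasing on $(0,\infty)$, for $0<\beta<1$ the constraint $\PDC(\Psiw(D,\mu,h)) \le \beta$ is equivalent to
\[
\Expect_\infty[T((W_{\lambdad})^{h+},0,\mu)\mid W_{\lambdad}<0] \;\ge\; \frac{1-\beta}{\beta}\,\Expect_\infty[\lambdad\mid W_{\lambdad}<0] .
\]
(The case $\beta=1$ is vacuous.)

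Next I would upper-bound the right-hand side and lower-bound the left-hand side by $D$-free quantities. By Lemma~\ref{thm:lambdaDUpperBound}, $a(D) \le \Expect_\infty[\lambdaInf]/\Prob_\infty(L(X_1)<0)$ for \emph{every} $D$, and by Lemma~\ref{thm:EInfTxy_UpperLower}, $x(D,\mu) \ge T_L^{(\infty)}(h,\mu)$ for \emph{every} $D$. Hence it suffices to choose $\mu$ so that
\[
T_L^{(\infty)}(h,\mu) \;\ge\; \frac{1-\beta}{\beta}\cdot \frac{\Expect_\infty[\lambdaInf]}{\Prob_\infty(L(X_1)<0)} ,
\]
and any such $\mu$ then works simultaneously for all $D$. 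Substituting the definition \eqref{eq:TxyLowerBound} of $T_L^{(\infty)}(h,\mu)$, which is proportional to $1/\mu$, and solving this inequality for $\mu$ yields exactly the bound on $\mu$ displayed in the statement; call the resulting admissible value $\mu^*=\mu^*(h)$. Since Lemma~\ref{thm:EInfTxy_UpperLower} guarantees $T_L^{(\infty)}(h,\mu)>0$ when $h>0$, and $T_L^{(\infty)}(h,\mu)\to\infty$ as $\mu\downarrow0$, such a $\mu^*$ always exists, proving the ``moreover'' part; taking $\mu=\mu^*$ for a fixed but arbitrary $D$ also gives the first (weaker) assertion.

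I do not expect a real obstacle here: the substantive work was done in Lemmas~\ref{thm:lambdaDUpperBound}--\ref{thm:E1Txy}, whose entire point is to bound the conditional means of $\lambdad$ and $T$ uniformly in $D$, and what remains is bookkeeping plus isolating $\mu$. The only points needing a little care are applying the monotonicity reduction in the correct direction, handling the trivial $\beta=1$ case separately, and emphasizing that $h>0$ is genuinely used: it is what makes $T_L^{(\infty)}(h,\mu)>0$, hence what makes a $\PDC$ constraint with $\beta<1$ achievable at all. For $h=0$ the statistic never goes below zero, $T\equiv0$, and $\PDC\equiv1$ (as noted in the remark following Theorem~\ref{thm:PDC}), so no choice of $\mu$ would help.
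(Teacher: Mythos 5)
Your proposal is correct and follows essentially the same route as the paper: both start from the expression \eqref{eq:PDCExpr}, replace $\Expect_\infty[\lambdad \mid W_{\lambdad}<0]$ by its $D$-free upper bound from Lemma~\ref{thm:lambdaDUpperBound} and $\Expect_\infty[T((W_{\lambdad})^{h+},0,\mu)\mid W_{\lambdad}<0]$ by the $D$-free lower bound $T_L^{(\infty)}(h,\mu)$ from Lemma~\ref{thm:EInfTxy_UpperLower}, use the monotonicity of $a/(a+t)$, and solve the resulting inequality for $\mu$. Your rearrangement of $\PDC\le\beta$ into a lower bound on the conditional mean of $T$ before bounding each side is only a cosmetic reordering of the paper's algebra.
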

\begin{proof}
Note that $\Expect_\infty[\lambdad | W_{\lambdad}\leq 0]$ is not affected by the choice of $h$ and $\mu$.
Moreover, from Lemma~\ref{thm:EInfTxy_UpperLower} and \eqref{eq:TxyLowerBound}
\begin{equation*}
\begin{split}
\Expect_\infty &[T((W_{\lambdad})^{h+}, 0,\mu) \; \Big|\;  W_{\lambdad} < 0] \\
& \geq T_L^{(\infty)}(h, \mu) \\
& = \frac{\Expect_\infty[|L(X_1)^{h+}| \; \Big| \; L(X_1)<0]}{\mu}  \; \Prob_\infty(L(X_1) < 0)\\
\end{split}
\end{equation*}
Thus, for a given $D$ and $h$, $\Expect_\infty[T((W_{\lambdad})^{h+},0,\mu)| W_{\lambdad} < 0]$ increases
as $\mu$ decreases. Hence, $\PDC$ decreases as $\mu$ decreases.
Therefore, we can always select a $\mu$ small enough so that the $\PDC$ is smaller than the given
constraint of $\beta$.

Next, our aim is to find a $\mu^*$ such that for every $D$
\begin{eqnarray*}
\frac{\Expect_\infty[\lambdad | W_{\lambdad} < 0 ]}{\Expect_\infty[\lambdad | W_{\lambdad} < 0 ]+
\Expect_\infty [T((W_{\lambdad})^{h+},0,\mu^*) \;  \Big| \; W_{\lambdad} < 0]} \leq \beta,
\end{eqnarray*}
Since, $\PDC$ increases as $\Expect_\infty[\lambdad | W_{\lambdad} < 0]$ increases and
$\Expect_\infty [T((W_{\lambdad})^{h+},0,\mu^*) \;  W_{\lambdad} < 0]$ decreases,
we have from Lemma~\ref{thm:lambdaDUpperBound} and Lemma~\ref{thm:EInfTxy_UpperLower},
\[
\PDC(\Psiw) \leq \frac{\Expect_\infty[\lambdaInf]}{\Expect_\infty[\lambdaInf]+T_L^{(\infty)}(h, \mu) \; \; \Prob_\infty(L(X_1) < 0 )}.
\]

Then, the theorem is proved if we select $\mu$ such that the right hand side of the above equation is less
than $\beta$ or a $\mu$ that satisfies
\[\mu \leq \frac{\Expect_\infty[|L(X_1)^{h+}| \; \Big| \; L(X_1)<0] \; \Prob_\infty(L(X) < 0)^2}{\Expect_\infty[\lambdaInf]}\frac{\beta}{1-\beta}.\]
\end{proof}
\begin{remark}
While the existence of $\mu*$ proved by Theorem~\ref{thm:PDCmuStar} above is critical for asymptotic
optimality of the DE-CuSum algorithm, the estimate it provides when substituted
for $\mu$ in \eqref{eq:PDCExpr} may be a bit conservative. 
In Section~\ref{sec:DECUSUM_Design} we provide a good approximation to $\PDC$ that can be used
to choose the value of $\mu$  in practice. 
In Section~\ref{sec:Trade-off} we provide numerical results 
showing the accuracy of the approximation. 
\end{remark}

%\tb{Be consistent with the subscripts for $\lambdad$. Note that the D here is threshold and not the first letter of a name}
\smallskip
%\begin{remark}
%The upper bound on $\PDC$ provided in the above theorem will provide a good estimate of $\PDC$ if $\mu$ is small
%and $D(f_0 \| f_1)$ is large.
%\end{remark}
\smallskip
\begin{remark}
By Theorem~\ref{thm:PDCmuStar}, for any value of $h$, we can select a value of $\mu$ small enough, so that
any $\PDC$ constraint close to zero can be met with equality. However, meeting the $\PDC$ constraint with
equality may not be possible if $\beta$ is close to 1. This is because
if $h \neq 0$ then
\[\PDC(\Psiw) \leq \frac{\Expect_\infty[\lambdaInf]}{\Expect_\infty[\lambdaInf]+\Prob_\infty(L(X) < 0)} < 1.\]
However, as mentioned earlier, for most practical applications $\beta$ will be close to zero than 1,
and hence this issue will not be encountered. If $\beta$ close to 1 is indeed desired then
the DE-CuSum algorithm can be easily modified to address this issue (by skipping samples only
when the undershoot is larger than a pre-designed threshold).
\end{remark}
\smallskip

\subsection{Analysis of the CuSum algorithm}\label{sec:AnalyCuSum}
In the sections to follow, we will express the performance of the DE-CuSum algorithm in terms
of the performance of the CuSum algorithm. Therefore, in this section we summarize the performance
of the CuSum algorithm.

It is well known (see \cite{lord-amstat-1971}, \cite{sieg-seq-anal-book-1985}, \cite{veer-bane-elsevierbook-2013}),
that
\begin{equation}\label{eq:CuSumDelayCADDWADD}
\CADD(\Psic) = \WADD(\Psic) = \Expect_1[\tauc-1].
\end{equation}
From \cite{lord-amstat-1971}, if $0 < D(f_1 \; ||\; f_0)<\infty$, then $\Expect_1[\tauc]<\infty$. Moreover, if
$\{\lambda_1, \lambda_2, \cdots\}$ are i.i.d. random variables each with distribution of $\lambdad$, then
by Wald's lemma \cite{sieg-seq-anal-book-1985}
\begin{equation}\label{eq:CuSumDelayE1}
\Expect_1[\tauc] = \Expect_1\left[\sum_{k=1}^{N} \lambda_k\right] =  \Expect_1[N] \;\Expect_1[\lambdad],
\end{equation}
where $N$ is the number of two-sided tests (SPRTs)--each with distribution of $\lambdad$--executed before the change is declared.

It is also shown in \cite{lord-amstat-1971} that $0 < D(f_1 \; ||\; f_0)<\infty$ is also sufficient to guarantee
$\Expect_\infty[\tauc]< \infty$ and $\FAR(\Psic)>0$.  Moreover,
\begin{equation}\label{eq:CuSumFAR}
\Expect_\infty[\tauc] = \Expect_\infty\left[\sum_{k=1}^{N} \lambda_k\right] =  \Expect_\infty[N] \;\Expect_\infty[\lambdad].
\end{equation}

The proof of the following theorem can be found in \cite{lord-amstat-1971} and \cite{lai-ieeetit-1998}.
\begin{theorem}\label{thm:CuSumOptLordenLai}
If $0 < D(f_1 \; ||\; f_0)<\infty$, then with $D=\log\frac{1}{\alpha}$,
\[
\FAR(\Psic) \leq \alpha,
\]
and as $\alpha \to 0$,
\[
\CADD(\Psic) = \WADD(\Psic) = \Expect_1[\tauc-1] \sim \frac{|\log \alpha|}{D(f_1 \; ||\; f_0)}.
\]
Thus, the CuSum algorithm is asymptotically optimal for both Problem~\ref{prob:DELorden} and Problem~\ref{prob:DEPollak}
because for any stopping time $\tau$ with $\FAR(\tau) \leq \alpha$,
\begin{equation}\label{eq:lordenLowerBound}
\WADD(\tau) \geq \CADD(\tau) \geq \frac{|\log \alpha|}{D(f_1 \; ||\; f_0)} \Big( 1+o(1) \Big),
\end{equation}
as $\alpha \to 0$.
\end{theorem}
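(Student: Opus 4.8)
The statement is classical---its proof is in \cite{lord-amstat-1971} and \cite{lai-ieeetit-1998}---but here is how I would reconstruct it. The plan has three parts: (a) the false alarm bound $\FAR(\Psic)\le\alpha$ for $D=\log(1/\alpha)$; (b) the delay asymptotics $\Expect_1[\tauc-1]\sim|\log\alpha|/D(f_1\|f_0)$; and (c) the matching converse \eqref{eq:lordenLowerBound}, which combined with (a)--(b) gives asymptotic optimality for both Problem~\ref{prob:DELorden} and Problem~\ref{prob:DEPollak}. Parts (a) and (b) use only the SPRT decomposition of $\tauc$ underlying \eqref{eq:CuSumDelayE1} and \eqref{eq:CuSumFAR} (equivalently, the DE-CuSum picture with $h=0$; see \cite{sieg-seq-anal-book-1985}), Wald's identity, and standard renewal-overshoot estimates; part (c) is the genuine information-theoretic lower bound.

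For (a), I would exploit that $\tauc$ is a concatenation of i.i.d.\ restarted SPRTs, each with stopping time distributed as $\lambdad$ and each exiting the strip $[0,D]$ either at the top or at the bottom. Under $\Prob_\infty$, Wald's likelihood-ratio inequality gives that the exit-at-the-top probability is at most $e^{-D}$, since on that event the accumulated likelihood ratio is at least $e^{D}$. Hence the number $N$ of SPRTs run before the top is first crossed is geometric with success parameter at most $e^{-D}$, so $\Expect_\infty[N]\ge e^{D}$. Since $\lambdad\ge1$, \eqref{eq:CuSumFAR} gives $\Expect_\infty[\tauc]=\Expect_\infty[N]\,\Expect_\infty[\lambdad]\ge e^{D}=1/\alpha$ when $D=\log(1/\alpha)$, i.e.\ $\FAR(\Psic)\le\alpha$.

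For (b), by \eqref{eq:CuSumDelayCADDWADD} it suffices to estimate $\Expect_1[\tauc-1]$, and by \eqref{eq:CuSumDelayE1}, $\Expect_1[\tauc]=\Expect_1[N]\,\Expect_1[\lambdad]$. Under $\Prob_1$ the increments $\log L(X_k)$ have positive finite mean $D(f_1\|f_0)$, which yields two facts as $D\to\infty$: (i) the probability $p_D$ that a single SPRT exits at the top stays bounded away from $0$---it decreases to the positive probability that the post-change random walk remains strictly positive for all time---so $\Expect_1[N]=1/p_D=O(1)$; and (ii) Wald's identity applied to the walk stopped at $\lambdad$ gives $D(f_1\|f_0)\,\Expect_1[\lambdad]=\Expect_1[S_{\lambdad}]=p_D\,D+O(1)$, where the $O(1)$ absorbs the uniformly bounded mean overshoot above $D$ and undershoot below $0$ (finite because $D(f_1\|f_0)<\infty$). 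Multiplying (i) and (ii), $\Expect_1[\tauc]=D/D(f_1\|f_0)\cdot(1+o(1))$, and setting $D=|\log\alpha|$ gives the claimed asymptotics, with $\Expect_1[\tauc-1]$ having the same first-order behaviour.

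For (c), \eqref{eq:lordenLowerBound} is the Lorden/Lai converse: for any $\tau$ with $\Expect_\infty[\tau]\ge1/\alpha$, a change-of-measure argument shows that reliably flagging the change within a delay $m$ forces $m\,D(f_1\|f_0)$ to exceed, up to lower-order terms, $|\log\alpha|$, whence $\CADD(\tau)\ge|\log\alpha|/D(f_1\|f_0)\,(1+o(1))$ and a fortiori $\WADD(\tau)\ge\CADD(\tau)$ obeys the same bound; I would invoke \cite{lord-amstat-1971} and \cite{lai-ieeetit-1998} directly for this. The main obstacle is not the bookkeeping but making the estimate $\Expect_1[S_{\lambdad}]=p_D\,D+O(1)$ rigorous \emph{uniformly} in $D$---controlling the boundary overshoots and verifying that $p_D$ does not degenerate to zero---and, separately, the converse in (c), which genuinely requires the sequential-analysis lower-bound machinery rather than elementary manipulation.
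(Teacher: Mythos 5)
Your proposal is correct, but note that the paper itself offers no proof of this theorem: it states that the proof ``can be found in \cite{lord-amstat-1971} and \cite{lai-ieeetit-1998}'', which is exactly what you do for the converse \eqref{eq:lordenLowerBound}. For parts (a) and (b) you supply the standard argument that the paper omits, and it is sound: Wald's likelihood-ratio identity gives $\Prob_\infty(W_{\lambdad}>D)=\Expect_1[e^{-W_{\lambdad}};\,W_{\lambdad}>D]\le e^{-D}$, so $N$ is geometric with parameter at most $e^{-D}$ and \eqref{eq:CuSumFAR} yields $\Expect_\infty[\tauc]\ge e^{D}=1/\alpha$; then \eqref{eq:CuSumDelayE1}, Wald's identity, and the fact that $p_D=\Prob_1(W_{\lambdad}>D)$ stays bounded away from zero give $\Expect_1[\tauc]=\Expect_1[N]\,\Expect_1[\lambdad]\sim D/D(f_1\,\|\,f_0)$. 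One caution on the step you yourself flag as the main obstacle: a \emph{uniformly bounded} mean overshoot of the walk above $D$ requires a second-moment condition on $\log L(X_1)$ (Lorden's inequality); under the stated hypothesis $0<D(f_1\,\|\,f_0)<\infty$ you only get that the mean overshoot is $o(D)$, but that is all the first-order asymptotic requires, so your conclusion stands (and similarly the contribution of the excursions ending below $0$ is $O(1)$ by the same likelihood-ratio argument the paper uses in its proof of Lemma~\ref{thm:E1Txy}). Deferring the converse (c) to \cite{lord-amstat-1971} and \cite{lai-ieeetit-1998} is consistent with the paper, which defers the entire theorem; your sketch of the change-of-measure lower bound is the right mechanism.
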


\subsection{$\FAR$ for the DE-CuSum algorithm}
\label{sec:DECuSumFAR}
In this section we characterize the false alarm rate of the DE-CuSum algorithm.
The following theorem shows that for a fixed $D$, $\mu$ and $h$,
if the DE-CuSum algorithm and the CuSum algorithm are applied to the same
sequence of random variables, then sample-pathwise,
the DE-CuSum statistic $W_n$ is always below the CuSum statistic $C_n$.
Thus, the DE-CuSum algorithm crosses
the threshold $D$ only after the CuSum algorithm has crossed it.

\smallskip
\begin{lemma}\label{lem:SamplePathwise}
Under any $\Prob_n, \;n \geq 1$ and under $\Prob_\infty$,
\[C_n \geq W_n.\]
Thus
\[\tauc \leq \tauw.\]
\end{lemma}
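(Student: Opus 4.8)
The plan is to prove the pointwise inequality $W_n \leq C_n$ for all $n \geq 0$ by induction on $n$, working on an arbitrary fixed realization of the observation sequence $\{X_n\}$. Since such a pathwise comparison makes no reference to which probability measure governs the $X_n$'s, establishing it for every sample path immediately yields $C_n \geq W_n$ under each $\Prob_n$, $n \geq 1$, and under $\Prob_\infty$ simultaneously. The base case is trivial: $W_0 = C_0 = 0$. For the inductive step we assume $W_n \leq C_n$ and split according to the value of $M_{n+1}$, which (being a function of the DE-CuSum statistic's own past) is determined by the sign of $W_n$.

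If $M_{n+1} = 0$, i.e. $W_n < 0$, then by the update rule $W_{n+1} = \min\{W_n + \mu, 0\} \leq 0$, while $C_{n+1} = (C_n + \log L(X_{n+1}))^+ \geq 0$, so $W_{n+1} \leq 0 \leq C_{n+1}$. If $M_{n+1} = 1$, i.e. $W_n \geq 0$, then $W_{n+1} = (W_n + \log L(X_{n+1}))^{h+} = \max\{\, W_n + \log L(X_{n+1}),\, -h \,\}$. Here I would use three elementary facts: first, $W_n + \log L(X_{n+1}) \leq C_n + \log L(X_{n+1})$ by the induction hypothesis; second, $C_n + \log L(X_{n+1}) \leq (C_n + \log L(X_{n+1}))^+ = C_{n+1}$; and third, $-h \leq 0 \leq C_{n+1}$. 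Combining the first two gives $W_n + \log L(X_{n+1}) \leq C_{n+1}$, and with the third both arguments of the maximum are bounded by $C_{n+1}$, whence $W_{n+1} \leq C_{n+1}$. This completes the induction.

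Finally, to deduce the stopping-time inequality: if $W_n > D$ for some $n \geq 1$, then $C_n \geq W_n > D$, so $\{\, n \geq 1 : W_n > D \,\} \subseteq \{\, n \geq 1 : C_n > D \,\}$, and taking infima gives $\tauc = \inf\{n \geq 1 : C_n > D\} \leq \inf\{n \geq 1 : W_n > D\} = \tauw$.

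I do not anticipate a real obstacle here; the argument is a short deterministic induction. The only point that needs a moment's care is the $M_{n+1}=1$ branch, where the DE-CuSum statistic is floored at $-h$ via $(\cdot)^{h+}$ rather than at $0$ via $(\cdot)^+$ as in CuSum; one must verify that this lower floor still lies below $C_{n+1}$, which holds because $C_{n+1} \geq 0 \geq -h$. A secondary (purely expository) point is to emphasize that the comparison is genuinely pathwise, so that the single induction covers all the probability measures named in the statement.
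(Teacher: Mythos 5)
Your induction is correct and complete, and it establishes the same sample-pathwise domination $C_n \geq W_n$ that the paper relies on; the paper's own proof is just a one-sentence appeal to the definition (``since all the samples are utilized in the CuSum algorithm, the same sequence must also cause the CuSum statistic to go above $D$''), so your argument is essentially the paper's, carried out in full detail. The one point you rightly flag --- that in the $M_{n+1}=1$ branch the floor $-h$ still sits below $C_{n+1}\geq 0$, and in the $M_{n+1}=0$ branch $W_{n+1}\leq 0 \leq C_{n+1}$ --- is exactly what makes the comparison go through.
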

\begin{proof}
This follows directly from the definition of the DE-CuSum algorithm. If a sequence
of samples causes the statistic of the DE-CuSum algorithm to go above $D$, then since
all the samples are utilized in the CuSum algorithm, the same sequence must also cause the
CuSum statistic to go above $D$.
\end{proof}

It follows as a corollary of Lemma~\ref{lem:SamplePathwise} that
\[\Expect_\infty[\tauc] \leq \Expect_\infty[\tauw].\]
The following theorem shows that these quantities are finite and
also provides an estimate for $\FAR(\Psiw)$.
\smallskip
\begin{theorem}\label{thm:FARMean}
For any fixed $h$ (including $h=\infty$) and $\mu>0$, if
\[
0< D(f_0 \;||\; f_1) < \infty \ \ \ \mbox{ and } \ \ \ 0< D(f_1\; ||\; f_0) < \infty,
\]
then with $D=\log\frac{1}{\alpha}$,
\[
\FAR(\Psiw) \leq \FAR(\Psic) \leq \alpha.
\]
Moreover, for any $D$
\begin{equation}\label{eq:Etauw_intermsof_Etauc}
\begin{split}
\Expect_\infty[\tauw] &= \frac{\Expect_\infty[\Lambdad]}{\Prob_\infty(W_{\lambdad}>0)} \\
&=\frac{\Expect_\infty[\lambdad]}{\Prob_\infty(W_{\lambdad}>0)} +
\frac{\Expect_\infty[T((W_{\lambdad})^{h+},0,\mu) \; \indic_{\{W_{\lambdad} < 0\}}]}{\Prob_\infty(W_{\lambdad}>0)}\\
&=\Expect_\infty[\tauc] +
\frac{\Expect_\infty[T((W_{\lambdad})^{h+},0,\mu) \; \indic_{\{W_{\lambdad} < 0\}}]}{\Prob_\infty(W_{\lambdad}>0)}
\end{split}
\end{equation}
and as $D\to \infty$,
\begin{equation}\label{eq:ratioOfFARs}
\frac{\FAR(\Psiw)}{\FAR(\Psic)} \to \frac{\Expect_\infty[\lambdaInf]}{\Expect_\infty[\lambdaInf] +
\Expect_\infty[T((W_{\lambdaInf})^{h+},0,\mu)]},
\end{equation}
where $\lambdaInf$ is the variable $\lambdad$ with $D=\infty$.
The limit in \eqref{eq:ratioOfFARs} is strictly less than 1 if $h>0$.
\end{theorem}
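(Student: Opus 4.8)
The plan is to establish the four claims of Theorem~\ref{thm:FARMean} in sequence, the engine throughout being the regenerative structure of the DE-CuSum statistic together with Wald's identity. The first claim is immediate: by Lemma~\ref{lem:SamplePathwise} we have $\tauc \le \tauw$ under $\Prob_\infty$, hence $\Expect_\infty[\tauw] \ge \Expect_\infty[\tauc]$ and therefore $\FAR(\Psiw) = 1/\Expect_\infty[\tauw] \le 1/\Expect_\infty[\tauc] = \FAR(\Psic)$, while $\FAR(\Psic) \le \alpha$ with $D=\log(1/\alpha)$ is Theorem~\ref{thm:CuSumOptLordenLai}.

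For the formula \eqref{eq:Etauw_intermsof_Etauc} I would first record the key structural fact that at the end of a skip phase the DE-CuSum statistic is reset to exactly $0$, since $W_{n+1}=\min\{W_n+\mu,0\}=0$ as soon as $W_n+\mu\ge 0$. Hence, under $\Prob_\infty$, the statistic regenerates at $0$ after each two-sided test, so $\tauw = \sum_{k=1}^{N}\Lambdad^{(k)}$ with $\{\Lambdad^{(k)}\}$ i.i.d.\ copies of $\Lambdad$ and $N$, the number of two-sided tests until the first SPRT exits above $D$, geometric with parameter $\Prob_\infty(W_{\lambdad}>0)$ (note $\{W_{\lambdad}>0\}=\{W_{\lambdad}>D\}$ since the SPRT exits $[0,D]$), exactly as for the CuSum in \eqref{eq:CuSumFAR}. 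Here $\Prob_\infty(W_{\lambdad}>0)>0$ because $\Expect_\infty[\tauc]<\infty$ (Theorem~\ref{thm:CuSumOptLordenLai}, using $D(f_1\|f_0)<\infty$), and $\Expect_\infty[\Lambdad]<\infty$ because $\Expect_\infty[\lambdad]\le\Expect_\infty[\lambdaInf]<\infty$ by \eqref{eq:EinflambdaInf_finite} and $\Expect_\infty[T((W_{\lambdad})^{h+},0,\mu)\,\indic_{\{W_{\lambdad}<0\}}] = \Prob_\infty(W_{\lambdad}<0)\,\Expect_\infty[T((W_{\lambdad})^{h+},0,\mu)\mid W_{\lambdad}<0] \le T_U^{(\infty)}(h,\mu)<\infty$ by Lemma~\ref{thm:EInfTxy_UpperLower}. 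Wald's identity then gives $\Expect_\infty[\tauw]=\Expect_\infty[N]\,\Expect_\infty[\Lambdad]=\Expect_\infty[\Lambdad]/\Prob_\infty(W_{\lambdad}>0)$, which is finite; splitting $\Expect_\infty[\Lambdad]=\Expect_\infty[\lambdad]+\Expect_\infty[T((W_{\lambdad})^{h+},0,\mu)\,\indic_{\{W_{\lambdad}<0\}}]$ from the definition \eqref{eq:Lambda} of $\Lambdad$, and using the identical computation $\Expect_\infty[\tauc]=\Expect_\infty[\lambdad]/\Prob_\infty(W_{\lambdad}>0)$ for the CuSum, yields the three lines of \eqref{eq:Etauw_intermsof_Etauc} (and in particular $\Expect_\infty[\tauw]<\infty$).

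For the limit \eqref{eq:ratioOfFARs}, the common factor $\Prob_\infty(W_{\lambdad}>0)$ cancels, leaving
\[
\frac{\FAR(\Psiw)}{\FAR(\Psic)}=\frac{\Expect_\infty[\tauc]}{\Expect_\infty[\tauw]}=\frac{\Expect_\infty[\lambdad]}{\Expect_\infty[\lambdad]+\Expect_\infty[T((W_{\lambdad})^{h+},0,\mu)\,\indic_{\{W_{\lambdad}<0\}}]},
\]
and I would let $D\to\infty$ by monotone convergence. Within a single SPRT the statistic evolves as the partial-sum walk $S_n=\sum_{k=1}^{n}\log L(X_k)$, which under $\Prob_\infty$ has negative drift, so it enters $(-\infty,0)$ at the a.s.\ finite time $\lambdaInf$ and has an a.s.\ finite running maximum; hence on each sample path $\lambdad\uparrow\lambdaInf$, the event $\{W_{\lambdad}<0\}$ coincides with $\{\lambdad=\lambdaInf\}$ (the walk leaves $[0,D]$ through $0$) and increases to a full-measure set, with $W_{\lambdad}=W_{\lambdaInf}$ there. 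Monotone convergence then gives $\Expect_\infty[\lambdad]\uparrow\Expect_\infty[\lambdaInf]$ and $\Expect_\infty[T((W_{\lambdad})^{h+},0,\mu)\,\indic_{\{W_{\lambdad}<0\}}]=\Expect_\infty[T((W_{\lambdaInf})^{h+},0,\mu)\,\indic_{\{\lambdad=\lambdaInf\}}]\uparrow\Expect_\infty[T((W_{\lambdaInf})^{h+},0,\mu)]$, the last being finite by \eqref{eq:EinfWInf_hplus_finite} together with $T(x,0,\mu)=\lceil -x/\mu\rceil\le |x|/\mu+1$ from \eqref{eq:TXYCeil}; this is exactly \eqref{eq:ratioOfFARs}. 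Finally, the limit is $<1$ iff $\Expect_\infty[T((W_{\lambdaInf})^{h+},0,\mu)]>0$, and when $h>0$ we have $W_{\lambdaInf}<0$ a.s., so $(W_{\lambdaInf})^{h+}=\max\{W_{\lambdaInf},-h\}<0$ and hence $T((W_{\lambdaInf})^{h+},0,\mu)\ge 1$ a.s.

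I expect the main obstacle to be making the regeneration/Wald step fully rigorous: identifying $N$ as a finite-mean stopping time relative to the sequence of i.i.d.\ two-sided tests, verifying that the reset to exactly $0$ after a skip phase is precisely what makes the $\Lambdad^{(k)}$ genuinely i.i.d., and justifying the interchange of the random (geometric) sum with expectation. The two limiting calculations, by contrast, are routine monotone-convergence arguments once the identification $\{W_{\lambdad}<0\}=\{\lambdad=\lambdaInf\}$ (with $W_{\lambdad}=W_{\lambdaInf}$ on that event) is in hand.
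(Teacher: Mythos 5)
Your proposal is correct and follows essentially the same route as the paper's proof: the regenerative decomposition of $\tauw$ into i.i.d.\ two-sided tests with a geometric number $\Nd$ of cycles, Wald's identity combined with the finiteness bounds from Lemma~\ref{thm:EInfTxy_UpperLower} and \eqref{eq:CuSumFAR}, and a passage to the limit $D\to\infty$ in the ratio $\Expect_\infty[\lambdad]/\Expect_\infty[\Lambdad]$. The only cosmetic difference is that you justify the limit by monotone convergence along $\{W_{\lambdad}<0\}\uparrow\{\lambdaInf<\infty\}$, whereas the paper equivalently shows that the contributions on the complementary event $\mathcal{C}$ vanish using integrability of $\lambdaInf$ and $T((W_{\lambdaInf})^{h+},0,\mu)$.
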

\begin{proof}
For a fixed $D$, let $\Nd$ be the number of two-sided tests of distribution $\Lambdad$
executed before the change is declared in the DE-CuSum algorithm.
Then, if $\{\Lambda_1, \Lambda_2, \cdots\}$ is a sequence of random variables each with distribution of $\Lambdad$, then
\[
\Expect_\infty[\tauw] = \Expect_\infty\left[\sum_{k=1}^{\Nd} \Lambda_k\right]
\]

Because of the renewal nature of the DE-CuSum algorithm,
\[
\Expect_\infty[\Nd] = \Expect_\infty[N],
\]
where $N$ is the number of SPRTs used in the CuSum algorithm.
Thus from \eqref{eq:CuSumFAR},
\[
\Expect_\infty[\Nd] = \Expect_\infty[N] \leq \Expect_\infty[\tauc] < \infty.
\]

Further from \eqref{eq:Lambda},
\begin{equation}\label{eq:Lambda_Intermsof_lambda}
\Expect_\infty[\Lambdad] = \Expect_\infty[\lambdad] + \Expect_\infty[T((W_{\lambdad})^{h+},0,\mu) \; \indic_{\{W_{\lambdad} < 0\}}].
\end{equation}
From \eqref{eq:CuSumFAR} again
\[
\Expect_\infty[\lambdad] \leq \Expect_\infty[\tauc] < \infty.
\]
Moreover from Lemma~\ref{thm:EInfTxy_UpperLower}
\begin{equation*}
\begin{split}
\Expect_\infty &[T((W_{\lambdad})^{h+},0,\mu) \; \indic_{\{W_{\lambdad} < 0\}}] \\
&\leq \Expect_\infty [T((W_{\lambdad})^{h+},0,\mu) \; | \; W_{\lambdad} < 0] \\
&\leq T_U^{(\infty)}(h,\mu) < \infty.\\
\end{split}
\end{equation*}
Thus, $\Expect_\infty[\Lambdad]<\infty$ and
\[
\Expect_\infty[\tauw] = \Expect_\infty\left[\sum_{k=1}^{\Nd} \Lambda_k\right] =  \Expect_\infty[\Nd] \;\Expect_\infty[\Lambdad] < \infty.
\]
It follows as a corollary of Lemma~\ref{lem:SamplePathwise} and Theorem \ref{thm:CuSumOptLordenLai}
that for $D=\log\frac{1}{\alpha}$,
\[\FAR(\Psiw) \leq \FAR(\Psic) \leq \alpha.\]

Since, $\Nd$ is $\text{Geom}(\Prob_\infty(W_{\lambdad}>0))$, \eqref{eq:Etauw_intermsof_Etauc} follows from \eqref{eq:Lambda_Intermsof_lambda} and \eqref{eq:CuSumFAR}.

Further, since $\Expect_\infty[\Nd] = \Expect_\infty[N]$, we have
\[
\frac{\Expect_\infty[\tauc]}{\Expect_\infty[\tauw]} = \frac{\Expect_\infty[N] \;\Expect_\infty[\lambdad]}{\Expect_\infty[\Nd] \;\Expect_\infty[\Lambdad]} = \frac{\Expect_\infty[\lambdad]}{\Expect_\infty[\Lambdad]}.
\]
If
\[\mathcal{C} = \{W_n \mbox{ reaches below zero only after touching } D\},\]
then as $D \to \infty$, $\Prob_\infty(\mathcal{C}) \to 0$ and since
$T((W_{\lambdaInf})^{h+},0,\mu)$ and $\lambdaInf$ are integrable under $\Prob_\infty$,
\[\Expect_\infty [T((W_{\lambdaInf})^{h+},0,\mu) \; ; \; \mathcal{C}] \to 0,\]
and
\[\Expect_\infty [\lambdaInf \; ; \; \mathcal{C}] \to 0.\]
Thus, as $D\to \infty$,
\[
\frac{\Expect_\infty[\tauc]}{\Expect_\infty[\tauw]} \to \frac{\Expect_\infty[\lambdaInf]}{\Expect_\infty[\LambdaInf]}
= \frac{\Expect_\infty[\lambdaInf]}{\Expect_\infty[\lambdaInf] + \Expect_\infty [T((W_{\lambdaInf})^{h+},0,\mu) ]}.
\]
The limit is clearly less than 1 if $h>0$.
%\begin{equation}
%\frac{\Expect_\infty[\tauc]}{\Expect_\infty[\tauw]} = \frac{\Expect_\infty[\lambdad]}{\Expect_\infty[\lambdad] +
%\Expect_\infty[T((W_{\lambdad})^{h+},0,\mu) \; \indic_{\{W_{\lambdad} < 0\}}]} < 1,
%\end{equation}
%Taking $D\to \infty$ we get \eqref{eq:ratioOfFARs}
\end{proof}
\smallskip
\begin{remark}
Thus, unlike the Bayesian setting where the $\PFA$ of the DE-Shiryaev algorithm converges to the $\PFA$ of the
Shiryaev algorithm, here, the $\FAR$ of the DE-CuSum algorithm is strictly less than the $\FAR$ of the
CuSum algorithm. Moreover, for large $D$, the right side of \eqref{eq:ratioOfFARs} is approximately
the $\PDC$ achieved. Thus, \eqref{eq:ratioOfFARs} shows that, asymptotically as $D\to \infty$,
the ratio of the $\FAR$s is approximately equal to the $\PDC$. This also shows that
one can set the threshold in the DE-CuSum algorithm to a value much smaller than
$D=\frac{1}{\alpha}$ to meet the $\FAR$ constraint with equality, and as a result
get a better delay performance. This latter fact will be used in
obtaining the numerical results in Section~\ref{sec:Trade-off}.
\end{remark}

\subsection{$\CADD$ and $\WADD$ of the DE-CuSum algorithm}
\label{sec:DECuSumCADDWADD}
We now provide expressions for $\CADD$ and $\WADD$ of the DE-CuSum algorithm
The main content of Theorem~\ref{thm:CADD} and Theorem~\ref{thm:WADD} below is,
that for each value of $D$, the $\CADD$ and $\WADD$ of the DE-CuSum algorithm is
within a constant of the corresponding performance of the CuSum algorithm. This constant
is independent of the choice of $D$,
and as a result the delay performances of the two algorithms are asymptotically the same.

The results depend on the following fundamental lemma. The lemma says that when the change
happens at $n=1$, then the average delay of the DE-CuSum algorithm starting with $W_0 = x > 0$,
is upper bounded by the average delay of the algorithm when $W_0=0$, plus a constant.
Let
\[\tauw(x) = \inf\{n\geq 1: W_n > D; W_0=x\}.\]
Here, $W_n$ is the DE-CuSum statistic and evolves according the description of the algorithm in
Section \ref{sec:DECUSUMALGO}. Thus, $\tauw(x)$ is the first time for the DE-CuSum algorithm
to cross $D$, when starting at $W_0=x$. Clearly, $\tauw(x) = \tauw$ if $x=0$.
\begin{lemma}\label{thm:resettingLemma}
Let $0< D(f_1\; ||\; f_0) < \infty$ and $0 \leq x < D$. Then,
\[\Expect_1[\tauw(x)] \leq \Expect_1[\tauw] + T_U^{(1)}(h,\mu),\]
where, $T_U^{(1)}(h,\mu)$ is an upper bound to the variable $T(x,y)$ (see \eqref{eq:E1TxyUpperBound}).
Moreover if $h<\infty$, then
\[\Expect_1[\tauw(x)] \leq \Expect_1[\tauw] + \lceil h/\mu \rceil.\]
\end{lemma}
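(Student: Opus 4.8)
The plan is to exploit the regenerative (``sequence of two‑sided tests'') description of the DE‑CuSum algorithm, which under $\Prob_1$ is especially clean since every observation has density $f_1$. Fix $0\le x<D$ and start the statistic at $W_0=x$. Until the first time $\lambda_D(x)$ that $\Phi(W_{n-1})$ leaves $[0,D]$, the statistic evolves as the random walk $S_n=x+\sum_{k=1}^n\log L(X_k)$ (no truncation or reflection occurs inside $[0,D]$); put $A_x=\{W_{\lambda_D(x)}>D\}$ and $B_x=\{W_{\lambda_D(x)}<0\}$, which for $h>0$ exhaust the two ways of leaving $[0,D]$. On $A_x$ we stop, so $\tauw(x)=\lambda_D(x)$; on $B_x$ the statistic equals $(S_{\lambda_D(x)})^{h+}\in[-h,0)$, then climbs by $\mu$ per step for $T((W_{\lambda_D(x)})^{h+},0,\mu)$ steps and lands exactly on $0$, after which, by the strong Markov property and i.i.d.-ness of the future observations, the residual time has the (unconditional) law of $\tauw$. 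With $\Lambda_D(x)=\lambda_D(x)+T((W_{\lambda_D(x)})^{h+},0,\mu)\,\indic_{B_x}$ this yields $\Expect_1[\tauw(x)]=\Expect_1[\Lambda_D(x)]+\Prob_1(B_x)\,\Expect_1[\tauw]$, and specializing to $x=0$ gives $\Expect_1[\tauw]=\Expect_1[\Lambda_D(0)]/\Prob_1(A_0)$. All these are finite: $\Expect_1[\lambda_D(x)]<\infty$ because the first passage of the positive-drift walk $S_n$ over a level has finite mean (\cite{lord-amstat-1971}, cf.\ Section~\ref{sec:AnalyCuSum}), the sojourn mean is finite by Lemma~\ref{thm:E1Txy}, and $\Prob_1(A_0)\ge\Prob_1(A_0\text{ with }D=\infty)>0$. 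For $h=0$ the statistic never goes negative and coincides with the CuSum statistic, which is pathwise nondecreasing in its starting value, so $\tauw(x)=\tauc(x)\le\tauc=\tauw$ and the claim holds with constant $\lceil0/\mu\rceil=0$; henceforth assume $h>0$.

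Subtracting the two identities, $\Expect_1[\tauw(x)]-\Expect_1[\tauw]=\Expect_1[\Lambda_D(x)]-\frac{\Prob_1(A_x)}{\Prob_1(A_0)}\Expect_1[\Lambda_D(0)]$. Next I would apply Wald's identity to the random-walk (SPRT) pieces: since $\Expect_1[\lambda_D(a)]<\infty$, $\Expect_1[\log L]=D(f_1\;\|\;f_0)>0$ and $\Expect_1|\log L|<\infty$ (both consequences of $0<D(f_1\;\|\;f_0)<\infty$), one has $\Expect_1[S_{\lambda_D(a)}]=a+D(f_1\;\|\;f_0)\Expect_1[\lambda_D(a)]$ for $a\in\{0,x\}$, and $W_{\lambda_D(a)}$ coincides with $S_{\lambda_D(a)}$ except on the part of $B_a$ where (only when $h<\infty$) the undershoot exceeds $h$, a bounded correction. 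Writing $W_{\lambda_D(a)}=D+R_a$ on $A_a$ with overshoot $R_a\ge0$ and $W_{\lambda_D(a)}=-Q_a$ on $B_a$ with undershoot $Q_a>0$ and substituting, the crucial cancellation is that the term $D\,\Prob_1(A_x)$ produced by $\Expect_1[\lambda_D(x)]$ is matched exactly by $\frac{\Prob_1(A_x)}{\Prob_1(A_0)}\cdot D\,\Prob_1(A_0)$ produced by $\Expect_1[\lambda_D(0)]$. Hence $\Expect_1[\tauw(x)]-\Expect_1[\tauw]$ reduces to a combination, free of any explicit $D$, of (i) an overshoot term $\frac{\Prob_1(A_x)}{D(f_1\|f_0)}\big(\Expect_1[R_x\mid A_x]-\Expect_1[R_0\mid A_0]\big)$, (ii) undershoot terms $\pm\,\Expect_1[Q_a\,\indic_{B_a}]/D(f_1\|f_0)$, (iii) sojourn terms $\Expect_1[T((W_{\lambda_D(x)})^{h+},0,\mu)\indic_{B_x}]-\frac{\Prob_1(A_x)}{\Prob_1(A_0)}\Expect_1[T((W_{\lambda_D(0)})^{h+},0,\mu)\indic_{B_0}]$, and (iv) the nonpositive term $-x/D(f_1\|f_0)$.

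Each of (ii), (iii), (iv) is bounded by a constant not depending on $D$ or $x$. The sojourn on $B_x$ satisfies $T((W_{\lambda_D(x)})^{h+},0,\mu)=\lceil|W_{\lambda_D(x)}|/\mu\rceil\le\lceil h/\mu\rceil$ pathwise when $h<\infty$, which is the source of the explicit constant $\lceil h/\mu\rceil$; for general $h$ one reuses the computation behind Lemma~\ref{thm:E1Txy} (it needs only $|W_{\lambda_D(x)}|\le|(\log L(X_{\lambda_D(x)}))^{h+}|$ on $B_x$, valid because $W_{\lambda_D(x)-1}\ge0$, and hence does not depend on where the SPRT started) to bound the sojourn mean by $T_U^{(1)}(h,\mu)$, giving the first displayed bound; the undershoot terms are dominated the same way, and $\Prob_1(A_x)/\Prob_1(A_0)\le1/\Prob_1(A_0\text{ with }D=\infty)$. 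For the sharp finite-$h$ statement one additionally uses the monotonicity $\Prob_1(A_x)\ge\Prob_1(A_0)$, obtained by coupling the two walks on common increments so that $S^x_n=S^0_n+x\ge S^0_n$ and whenever $S^0$ leaves $[0,D]$ through the top so does $S^x$; this lets the residual overshoot/undershoot bookkeeping be absorbed into $\lceil h/\mu\rceil$.

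The step I expect to be the real obstacle is controlling the overshoot difference in (i): showing that $\Expect_1[R_x\mid A_x]-\Expect_1[R_0\mid A_0]$ — the gap between the mean overshoots of the $\log L$‑walk over the \emph{same} level $D$, started from $x$ versus from $0$ and conditioned on reaching $D$ before $0$ — is bounded uniformly in $D$ and $x\in[0,D)$. Heuristically the overshoot over $D$ is determined, up to negligible corrections, by the walk's behaviour in a bounded band just below $D$, whereas the starting point and the conditioning on $A_\cdot$ affect only the early part of the path; but turning this into a rigorous renewal-theoretic estimate that uses only the first-moment hypothesis $0<D(f_1\;\|\;f_0)<\infty$ (no second moment of $\log L$ being available) is where the care lies. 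Once that uniform overshoot bound is secured, the rest is routine assembly of the regenerative identity, Wald's identity, and Lemma~\ref{thm:E1Txy}.
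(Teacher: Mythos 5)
Your route is genuinely different from the paper's, and it contains a real gap at exactly the step you flag. You reduce the lemma to bounding, uniformly in $D$ and $x\in[0,D)$, the conditional overshoot difference $\Expect_1[R_x\mid A_x]-\Expect_1[R_0\mid A_0]$ for the $\log L$-walk over the level $D$. Under the lemma's hypothesis only the first moment of $\log L(X_1)$ under $\Prob_1$ is assumed finite; without a second moment of $(\log L)^+$ the individual quantities $\Expect_1[R_a\mid A_a]$ need not stay bounded as $D\to\infty$ (the expected overshoot of a positive-drift walk over a high level converges only when the ascending ladder height has a finite second moment), so the term (i) cannot be handled by bounding each conditional overshoot separately. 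A cancellation between the two overshoots would have to be proved, and you give no argument for it; as written the proof does not close. A secondary issue: even granting a uniform bound on (i), your bookkeeping of terms (i)--(iv) would produce \emph{some} $D$-free constant, but not the specific constants $T_U^{(1)}(h,\mu)$ and $\lceil h/\mu\rceil$ asserted in the lemma (the latter is used as an exact additive constant in the $\WADD$ computation of Theorem~\ref{thm:WADD}).

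The paper avoids overshoot analysis entirely by decomposing at the first visit to $0$ rather than at the exit from $[0,D]$, and by comparing with the CuSum algorithm. Pathwise monotonicity of the CuSum recursion in its starting point gives $\Expect_1[\tauc(x)]\le\Expect_1[\tauc]$, and the renewal decomposition at the first hitting of zero writes $\Expect_1[\tauc(x)]=\mathsf{t_1}(1-q_x)+q_x(\mathsf{t_2}+\Expect_1[\tauc])\le\Expect_1[\tauc]$, where $q_x$ is the probability of touching $0$ before $D$ and $\mathsf{t_1},\mathsf{t_2}$ are common to the CuSum and DE-CuSum paths. Since the affine map $t\mapsto\mathsf{t_1}(1-q_x)+q_x(\mathsf{t_2}+t)$ has slope $q_x\le1$ and maps $\Expect_1[\tauc]$ below itself, it maps every $t\ge\Expect_1[\tauc]$ below $t$; applying this with $t=\Expect_1[\tauw]+T_U^{(1)}(h,\mu)$ (respectively $t=\Expect_1[\tauw]+\lceil h/\mu\rceil$ when $h<\infty$), which upper-bounds the analogous decomposition of $\Expect_1[\tauw(x)]$ because the sojourn below zero costs at most $T_U^{(1)}(h,\mu)$ in expectation (at most $\lceil h/\mu\rceil$ pathwise), yields the stated bounds with no overshoot estimates and no second-moment assumptions. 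If you want to salvage your approach, the cleanest fix is to abandon the Wald-identity expansion and import this comparison-plus-affine-map argument.
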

\begin{proof}
The proof is provided in the appendix.
\end{proof}

We first provide the result for the $\CADD$s of the two algorithms.
\begin{theorem}\label{thm:CADD}
Let
\[
0< D(f_0 \;||\; f_1) < \infty \ \ \ \mbox{ and } \ \ \ 0< D(f_1\; ||\; f_0) < \infty.
\]
Then, for fixed values of $\mu>0$ and $h$, and for each $D$,
\[\CADD(\Psiw) \leq \CADD(\Psic) + K_1 ,\]
where $K_1$ is a constant not a function of $D$.
Thus as $D\to \infty$,
\[\CADD(\Psiw) \leq \CADD(\Psic) + O(1) .\]
\end{theorem}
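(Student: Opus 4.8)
The plan is to prove the bound on $\CADD(\Psiw)$ by reducing the worst‑case change point to the case $n=1$ and then invoking the resetting lemma. Recall that $\CADD(\Psiw) = \sup_n \Expect_n[\tauw - n \mid \tauw \geq n]$. First I would analyze what happens on the event $\{\tauw \geq n\}$: the DE‑CuSum statistic $W_{n-1}$ is a well‑defined random variable taking values in $(-\infty, D]$, and conditioned on $I_{n-1}$ and on $\{\tauw \geq n\}$, the post‑change evolution of the algorithm from time $n$ onwards is that of the DE‑CuSum algorithm initialized at $W_{n-1}$, driven by the post‑change i.i.d.\ sequence. Hence $\Expect_n[\tauw - n \mid I_{n-1}, \tauw \geq n] = \Expect_1[\tauw(W_{n-1})] - 1$ in distribution, where $\tauw(x)$ is as defined just before Lemma~\ref{thm:resettingLemma} (with the convention that if $W_{n-1} < 0$ one first spends $T((W_{n-1})^{h+},0,\mu)$ steps climbing back to $0$, exactly the structure already baked into the algorithm's recursion). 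Taking the essential supremum / supremum over $I_{n-1}$ and $n$, the worst case is governed by the worst admissible starting value $x$ of $W$.

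The key step is then to split according to the sign of the starting value. If $W_{n-1} = x \geq 0$ (so $0 \leq x < D$ on $\{\tauw \geq n\}$), Lemma~\ref{thm:resettingLemma} gives directly $\Expect_1[\tauw(x)] \leq \Expect_1[\tauw] + T_U^{(1)}(h,\mu)$. If $W_{n-1} = x < 0$, then from state $x$ the algorithm first skips $T((x)^{h+},0,\mu)$ samples — a quantity that under $\Prob_1$ is bounded in expectation by $T_U^{(1)}(h,\mu)$ via Lemma~\ref{thm:E1Txy} (or deterministically by $\lceil h/\mu\rceil$ when $h<\infty$, using \eqref{eq:TXYCeil}) — after which it restarts from $W = 0$; so $\Expect_1[\tauw(x)] \leq T_U^{(1)}(h,\mu) + \Expect_1[\tauw]$ as well. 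In both cases, using $\CADD(\Psic) = \WADD(\Psic) = \Expect_1[\tauc - 1]$ from \eqref{eq:CuSumDelayCADDWADD} and $\Expect_1[\tauw - 1] \leq \Expect_1[\tauc - 1] + K$ — which itself follows by applying the $x=0$ case of the above to $\Expect_1[\tauw]$ relative to the CuSum's $\Expect_1[\tauc]$, i.e.\ comparing the two‑sided‑test decomposition $\Expect_1[\tauw] = \Expect_1[N]\Expect_1[\Lambdad]$ against $\Expect_1[\tauc] = \Expect_1[N]\Expect_1[\lambdad]$ and bounding $\Expect_1[\Lambdad] - \Expect_1[\lambdad] = \Expect_1[T((W_{\lambdad})^{h+},0,\mu)\indic_{\{W_{\lambdad}<0\}}] \leq T_U^{(1)}(h,\mu)$ — one obtains $\CADD(\Psiw) \leq \CADD(\Psic) + K_1$ with $K_1$ a function of $h$, $\mu$, $\Expect_1[N]$, $f_0$, $f_1$ only, and in particular not of $D$. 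The final claim $\CADD(\Psiw) \le \CADD(\Psic) + O(1)$ as $D\to\infty$ is then immediate since $K_1$ is a fixed constant.

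I expect the main obstacle to be making rigorous the reduction ``conditioned on $\{\tauw\geq n\}$ and on the worst $I_{n-1}$, the continuation behaves like DE‑CuSum started from $W_{n-1}$.'' The subtlety is that $W_{n-1}$ may be negative, in which case the observation‑control rule is already in ``skip'' mode, so the continuation is not literally a fresh DE‑CuSum run from a nonnegative state; one must argue that the extra skipped samples contribute at most the $T_U^{(1)}$ (or $\lceil h/\mu\rceil$) term in expectation under $\Prob_1$, which is precisely what Lemma~\ref{thm:E1Txy} and the $h<\infty$ part of Lemma~\ref{thm:resettingLemma} are designed to supply. A secondary technical point is verifying that the supremum over $n$ and $I_{n-1}$ genuinely reduces to the supremum over the scalar state $x \in (-\infty, D)$ — this uses the Markov property of $W_n$ together with the fact that under $\Prob_n$ the increments from time $n$ on depend on the past only through $W_{n-1}$, so that $\Expect_n[\tauw - n \mid I_{n-1}, \tauw \ge n]$ is a measurable function of $W_{n-1}$ alone, and its sup over attainable values of $W_{n-1}$ is dominated by $\sup_{0\le x<D}\Expect_1[\tauw(x)]$ plus the skip correction. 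Once these are in place, the arithmetic combining the pieces is routine.
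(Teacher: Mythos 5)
Your proposal follows essentially the same route as the paper's proof: condition on the state $W_{n-1}$ at the change point, split on its sign, use Lemma~\ref{thm:resettingLemma} for $W_{n-1}\ge 0$ and a sojourn bound for $W_{n-1}<0$, and then relate $\Expect_1[\tauw]$ to $\Expect_1[\tauc]$ through the two-sided-test decomposition. Two details need repair, though neither changes the architecture. First, on the event $\{W_{n-1}<0\}$ the undershoot was produced by \emph{pre-change} observations, so its conditional distribution is governed by $\Prob_\infty$, not $\Prob_1$; the correct bound on the residual sojourn is therefore $T_U^{(\infty)}(h,\mu)$ from Lemma~\ref{thm:EInfTxy_UpperLower}, which is what the paper uses, rather than $T_U^{(1)}(h,\mu)$ from Lemma~\ref{thm:E1Txy} as you cite. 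Your numerical conclusion survives because $\Prob_1(\log L(X_1)<0)=\int_{\{L<1\}}L\,f_0\le \Prob_\infty(\log L(X_1)<0)$, hence $T_U^{(\infty)}(h,\mu)\le T_U^{(1)}(h,\mu)$, but that comparison is not stated anywhere and would have to be added if you insist on quoting Lemma~\ref{thm:E1Txy}. Second, your final constant is described as depending on $\Expect_1[N]=1/\Prob_1(W_{\lambdad}>0)$, which \emph{is} a function of $D$; to obtain a $D$-free constant you must bound it uniformly by $1/\Prob_1(W_{\lambdaInf}>0)$, exactly as the paper does when it writes $\Expect_1[T((W_{\lambdad})^{h+},0,\mu)\,\indic_{\{W_{\lambdad}<0\}}]/\Prob_1(W_{\lambdad}>0)\le T_U^{(1)}(h,\mu)/\Prob_1(W_{\lambdaInf}>0)$. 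With these two substitutions your argument coincides with the paper's.
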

\begin{proof}
If the change happens at $n=1$ then
\begin{equation*}
\begin{split}
\Expect_1 &[\tauw-1 | \tauw\geq 1] = \Expect_1[\tauw] -1 \leq \Expect_1[\tauw].
\end{split}
\end{equation*}

Let the change happen at time $n>1$. Then on $\{W_{n-1} \geq 0\}$, by Lemma~\ref{thm:resettingLemma},
the average delay is
bounded from above by $\Expect_1[\tauw] + T_U^{(1)}(h,\mu)$, and on $\{W_{n-1} < 0\}$ the average delay
is bounded from above by $\Expect_1[\tauw]$ plus the maximum possible average time
spent by the DE-CuSum statistic below $0$ under $\Prob_\infty$, which is $T_U^{(\infty)}(h,\mu)$.
Thus, from Lemma~\ref{thm:EInfTxy_UpperLower}, for $n>1$,
\begin{equation*}
\begin{split}
\Expect_n &[\tauw-n | \tauw\geq n] \\
&\leq \left(\Expect_1[\tauw]+T_U^{(1)}(h,\mu)\right)\; \Prob_\infty(W_{n-1}\geq 0) \\
&+\left( \Expect_1[\tauw] + T_U^{(\infty)}(h,\mu) \right) \Prob_\infty(W_{n-1}< 0)
\end{split}
\end{equation*}
Thus, for all $n \geq 1$
\begin{equation*}
%\begin{split}
\Expect_n [\tauw-n | \tauw\geq n] \leq \Expect_1[\tauw] + T_U^{(1)}(h,\mu) + T_U^{(\infty)}(h,\mu).
%\end{split}
\end{equation*}
Since, the right hand side of the above equation is not a function of $n$ we have
\begin{equation*}
\CADD(\Psiw) \leq \Expect_1[\tauw] + T_U^{(1)}(h,\mu) + T_U^{(\infty)}(h,\mu).
\end{equation*}
Following Theorem~\ref{thm:FARMean} and its proof, it is easy to see that
\begin{equation*}
\begin{split}
\Expect_1[\tauw] = \Expect_1[\tauc] + \frac{\Expect_1[T((W_{\lambdad})^{h+},0,\mu) \; \indic_{\{W_{\lambdad} < 0\}}]}{\Prob_1(W_{\lambdad}>0)}.
\end{split}
\end{equation*}
From Lemma~\ref{thm:E1Txy} and the fact that $\Prob_1(W_{\lambdad}>0) > \Prob_1(W_{\lambdaInf}>0)$ we have
\[
\frac{\Expect_1[T((W_{\lambdad})^{h+},0,\mu) \; \indic_{\{W_{\lambdad} < 0\}}]}{\Prob_1(W_{\lambdad}>0)}
\leq \frac{T_U^{(1)}(h,\mu)}{\Prob_1(W_{\lambdaInf}>0)}.
\]
Also from \eqref{eq:CuSumDelayCADDWADD} we have $\CADD(\Psic) = \Expect_1[\tauc-1]$. Thus,
\begin{equation*}
\begin{split}
\CADD(\Psiw) &\leq \; \CADD(\Psic) \\
&+ \frac{T_U^{(1)}(h,\mu)}{\Prob_1(W_{\lambdaInf}>0)} +
T_U^{(1)}(h,\mu) + T_U^{(\infty)}(h,\mu) + 1 .
\end{split}
\end{equation*}
This proves the theorem.
\end{proof}
\smallskip
\begin{remark}
Note that the above theorem is valid even if $h$ is not finite. In contrast,
as we will see below, the $\WADD(\Psiw) = \infty$ if $h = \infty$. As a result,
we need a bound on the number of samples skipped for finiteness of worst case delay
according to the criterion of Lorden.
\end{remark}

We now express the $\WADD$ of the DE-CuSum algorithm in terms of the $\WADD$ of the CuSum algorithm.

\begin{theorem}\label{thm:WADD}
Let
\[
0< D(f_0 \;||\; f_1) < \infty \ \ \ \mbox{ and } \ \ \ 0< D(f_1\; ||\; f_0) < \infty.
\]
Then, for fixed values of $\mu>0$ and $h<\infty$, and for each $D$,
\[\WADD(\Psiw) \leq \WADD(\Psic) + K_2 ,\]
where $K_2$ is a constant not a function of $D$.
Thus, as $D\to \infty$,
\[\WADD(\Psiw) \leq \WADD(\Psic) + O(1) .\]
\end{theorem}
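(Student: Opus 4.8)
The plan is to follow the proof of Theorem~\ref{thm:CADD}, replacing the averaging over $W_{n-1}$ by a worst-case bound that is made possible precisely by the hypothesis $h<\infty$. First I would note that the DE-CuSum statistic $W_{n-1}$ is a deterministic function of the information vector $I_{n-1}$: it is produced by the DE-CuSum recursion, and each control $M_k$ is itself a function of $W_{k-1}$. Since $(\tauw-n)^+=0$ on the $I_{n-1}$-measurable event $\{\tauw\le n-1\}$, the essential supremum in \eqref{eq:WADD_Def} is effectively taken only over those $I_{n-1}$ on which $\tauw\ge n$, and on that event $W_{n-1}=x$ necessarily lies in $[-h,D]$ --- the lower bound being the whole point of the finite truncation level $h$.

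Next I would use the Markov structure of $W_n$. Under $\Prob_n$ the observations $X_n,X_{n+1},\dots$ are i.i.d.\ with density $f_1$ and are independent of $I_{n-1}$, so, conditioned on $\{W_{n-1}=x\}$, the sequence $W_{n-1},W_n,W_{n+1},\dots$ has exactly the law of the DE-CuSum statistic started from $W_0=x$ and run under $\Prob_1$. Hence $\Expect_n[(\tauw-n)^+\mid I_{n-1}]\le\Expect_1[\tauw(x)]$ with $\tauw(x)$ as in Lemma~\ref{thm:resettingLemma}. For $0\le x<D$ the finite-$h$ bound of Lemma~\ref{thm:resettingLemma} gives $\Expect_1[\tauw(x)]\le\Expect_1[\tauw]+\lceil h/\mu\rceil$; for $-h\le x<0$ the statistic climbs deterministically to $0$ in $\lceil -x/\mu\rceil\le\lceil h/\mu\rceil$ steps and then renews, so again $\Expect_1[\tauw(x)]\le\lceil h/\mu\rceil+\Expect_1[\tauw]$. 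Taking the essential supremum over $I_{n-1}$ and then the supremum over $n$ yields
\[
\WADD(\Psiw)\le\Expect_1[\tauw]+\lceil h/\mu\rceil .
\]
I expect this step --- turning the dependence of the conditional delay on the entire vector $I_{n-1}$ into dependence on the single scalar $W_{n-1}$, and then bounding that scalar uniformly --- to be the main obstacle; it is also exactly where $h<\infty$ is indispensable, since with $h=\infty$ a suitable $I_{n-1}$ can force an arbitrarily large undershoot, an unbounded number of skipped samples, and hence $\WADD(\Psiw)=\infty$ (cf.\ the remark after Theorem~\ref{thm:CADD}).

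Finally I would bound $\Expect_1[\tauw]$ in terms of $\Expect_1[\tauc]$ exactly as in the proof of Theorem~\ref{thm:CADD}: the renewal decomposition behind Theorem~\ref{thm:FARMean} gives
\[
\Expect_1[\tauw]=\Expect_1[\tauc]+\frac{\Expect_1[T((W_{\lambdad})^{h+},0,\mu)\,\indic_{\{W_{\lambdad}<0\}}]}{\Prob_1(W_{\lambdad}>0)},
\]
and Lemma~\ref{thm:E1Txy}, together with $\Prob_1(W_{\lambdad}>0)>\Prob_1(W_{\lambdaInf}>0)$, bounds the fraction by $T_U^{(1)}(h,\mu)/\Prob_1(W_{\lambdaInf}>0)$, a constant that does not depend on $D$. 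Combining this with $\WADD(\Psic)=\Expect_1[\tauc-1]$ from \eqref{eq:CuSumDelayCADDWADD} yields $\WADD(\Psiw)\le\WADD(\Psic)+K_2$ with $K_2=1+T_U^{(1)}(h,\mu)/\Prob_1(W_{\lambdaInf}>0)+\lceil h/\mu\rceil$ independent of $D$; letting $D\to\infty$ then gives the $O(1)$ form.
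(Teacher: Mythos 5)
Your proposal is correct and follows essentially the same route as the paper's proof: you bound the essential supremum of the conditional delay by $\Expect_1[\tauw]+\lceil h/\mu\rceil$ via Lemma~\ref{thm:resettingLemma} (spelling out the case $-h\le W_{n-1}<0$ that the paper leaves implicit), and then control $\Expect_1[\tauw]$ by $\Expect_1[\tauc]$ through the same renewal decomposition and Lemma~\ref{thm:E1Txy} used in Theorem~\ref{thm:CADD}. Your constant $K_2=1+T_U^{(1)}(h,\mu)/\Prob_1(W_{\lambdaInf}>0)+\lceil h/\mu\rceil$ matches the paper's up to the rounding of $\lceil h/\mu\rceil$ versus $h/\mu+1$.
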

\begin{proof}
From Lemma~\ref{thm:resettingLemma}, it follows that for $n>1$
\[\text{ess sup}\; \Expect_n \left[ (\tauw-n)^+ | I_{n-1} \right] = \lceil h/\mu \rceil + \Expect_1[\tauw].\]
Since the right hand side is not a function of $n$ and it is greater than $\Expect_1 [\tauw-1]$, we have
\[\WADD(\Psiw) = \lceil h/\mu \rceil + \Expect_1[\tauw].\]

Thus, from the proof of theorem above and \eqref{eq:CuSumDelayCADDWADD}
\begin{equation*}
\begin{split}
\Expect_1[\tauw] &\leq \Expect_1[\tauc] + \frac{T_U^{(1)}(h,\mu)}{\Prob_1(W_{\lambdaInf}>0)} \\
&= \WADD(\Psic) + \frac{T_U^{(1)}(h,\mu)}{\Prob_1(W_{\lambdaInf}>0)}  + 1,
\end{split}
\end{equation*}
and we have
\begin{equation*}
\begin{split}
\WADD(\Psiw) \leq \WADD(\Psic) + \frac{T_U^{(1)}(h,\mu)}{\Prob_1(W_{\lambdaInf}>0)}
+ \frac{h}{\mu} + 2.
\end{split}
\end{equation*}
This proves the theorem.
\end{proof}

\smallskip
The following corollary follows easily from Theorem~\ref{thm:CuSumOptLordenLai}, Theorem~\ref{thm:CADD} and Theorem~\ref{thm:WADD}.

\begin{corrly}\label{thm:CADDWADDCorollary}
If $0 < D(f_1 \; ||\; f_0)<\infty$ and $0 < D(f_0 \; ||\; f_1)<\infty$, then
for fixed values of $\mu$ and $h$, including the case of $h=\infty$ (no truncation), as $D\to \infty$,
\[\CADD(\Psiw) \sim \frac{D}{D(f_1 \; ||\; f_0)}.\]
Moreover, if $h<\infty$, then as $D\to \infty$,
\[\WADD(\Psiw) \sim \frac{D}{D(f_1 \; ||\; f_0)} .\]
\end{corrly}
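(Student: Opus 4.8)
The plan is to sandwich $\CADD(\Psiw)$ and $\WADD(\Psiw)$ between matching upper and lower bounds, each asymptotically equal to $D/D(f_1\|f_0)$, and then normalize and let $D\to\infty$. The reference rate comes from the CuSum benchmark: by \eqref{eq:CuSumDelayCADDWADD}, $\CADD(\Psic)=\WADD(\Psic)=\Expect_1[\tauc-1]$, and by Theorem~\ref{thm:CuSumOptLordenLai}, read with $D$ as the free parameter via $\alpha=e^{-D}$, this common value satisfies $\Expect_1[\tauc-1]\sim D/D(f_1\|f_0)$ as $D\to\infty$.

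For the upper bound, I would apply Theorem~\ref{thm:CADD}, which (by the remark following it) holds for $h=\infty$ as well: $\CADD(\Psiw)\le\CADD(\Psic)+K_1$ with $K_1$ independent of $D$. Dividing by $D/D(f_1\|f_0)$ and using $K_1/D\to0$ together with the benchmark asymptotics gives $\limsup_{D\to\infty}\CADD(\Psiw)\,D(f_1\|f_0)/D\le1$. When $h<\infty$, the identical argument with Theorem~\ref{thm:WADD} in place of Theorem~\ref{thm:CADD} yields the same $\limsup$ bound for $\WADD(\Psiw)$.

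For the lower bound, I would use the universal lower bound \eqref{eq:lordenLowerBound}. Fix $D$ and set $\alpha=e^{-D}$; by Theorem~\ref{thm:FARMean} (where both Kullback--Leibler finiteness conditions enter) we have $\FAR(\Psiw(D,\mu,h))\le\alpha$, so $\tauw$ is admissible in \eqref{eq:lordenLowerBound} and hence $\CADD(\Psiw)\ge\frac{|\log\alpha|}{D(f_1\|f_0)}(1+o(1))=\frac{D}{D(f_1\|f_0)}(1+o(1))$ as $D\to\infty$; since $\WADD(\Psiw)\ge\CADD(\Psiw)$ always, the same lower bound holds for $\WADD(\Psiw)$ when $h<\infty$. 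Combining the two directions gives $\CADD(\Psiw)\sim D/D(f_1\|f_0)$ for every fixed $\mu,h$ (including $h=\infty$), and $\WADD(\Psiw)\sim D/D(f_1\|f_0)$ whenever in addition $h<\infty$.

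The only genuinely delicate point is bookkeeping rather than new mathematics: one must keep $D$ as the asymptotic variable throughout and translate the $\alpha$-indexed statements of Theorem~\ref{thm:CuSumOptLordenLai} and \eqref{eq:lordenLowerBound} into $D$-indexed ones via $\alpha=e^{-D}$, checking that the $O(1)$ corrections ($K_1$, $K_2$, the overshoot terms inside $\CADD(\Psic)$) are all truly independent of $D$ so that they disappear after normalization. No estimates beyond Theorems~\ref{thm:CuSumOptLordenLai}, \ref{thm:FARMean}, \ref{thm:CADD} and \ref{thm:WADD} are required.
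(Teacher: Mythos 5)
Your proposal is correct and fills in exactly the argument the paper intends: the paper offers no separate proof, stating only that the corollary ``follows easily from'' Theorem~\ref{thm:CuSumOptLordenLai}, Theorem~\ref{thm:CADD} and Theorem~\ref{thm:WADD}, and your upper bound via the $O(1)$ gaps plus lower bound via \eqref{eq:lordenLowerBound} (with admissibility from Theorem~\ref{thm:FARMean}) is that argument made explicit. The only cosmetic alternative is to get the matching lower bound even more directly from Lemma~\ref{lem:SamplePathwise}, since $\tauw \ge \tauc$ pathwise gives $\CADD(\Psiw) \ge \Expect_1[\tauw-1] \ge \CADD(\Psic)$, but this changes nothing of substance.
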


%\tb{Proof for bang-bang control: choosing between skip none or maximum number of samples. At least cite that work???
%Mention that in Section IV?}

%
\subsection{Asymptotic optimality of the DE-CuSum algorithm}
\label{sec:AsymptoticOpt}
We now use the results from the previous sections to show that the DE-CuSum algorithm
is asymptotically optimal.

The following theorem says that for a given $\PDC$ constraint of $\beta$,
the DE-CuSum algorithm is asymptotically optimal
for both Problem~\ref{prob:DELorden} and Problem~\ref{prob:DEPollak},
as $\alpha \to 0$, for the following reasons:
\begin{itemize}
\item the $\PDC$ of the DE-CuSum algorithm can be designed to meet the constraint independent of the choice of $D$,
\item the $\CADD$ and $\WADD$ of the DE-CuSum algorithm approaches the corresponding performances
of the CuSum algorithm, 
\item the $\FAR$ of the DE-CuSum algorithm is always better than that
of the CuSum algorithm, and
\item the CuSum algorithm is asymptotically optimal for both Problem~\ref{prob:DELorden}
and Problem~\ref{prob:DEPollak}, as $\alpha \to 0$.
\end{itemize}
\smallskip
\begin{theorem}\label{thm:AsymptOpt}
Let $0 < D(f_1 \; ||\; f_0)<\infty$ and $0 < D(f_0 \; ||\; f_1)<\infty$.
For a given $\alpha$, set $D=\log\frac{1}{\alpha}$, then for any choice of $h$ and $\mu$,
\[\FAR(\Psiw) \leq \FAR(\Psic) \leq \alpha.\]
For a given $\beta$, and for any given $h$, it is possible to select $\mu=\mu^*(h)$ such
that $\forall D$, and hence even with $D=\log\frac{1}{\alpha}$,
\[\PDC(\Psiw) \leq \beta.\]
Moreover, for each fixed $\beta$, for any $h$ and with $\mu^*(h)$ selected to meet this $\PDC$
constraint of $\beta$, as $\alpha \to 0$ (or $D\to \infty$ because $D=\log\frac{1}{\alpha}$),
\[\CADD(\Psiw(\log\frac{1}{\alpha}, h, \mu^*(h))) \sim \CADD(\Psic) \sim \frac{|\log \alpha|}{D(f_1\;||\;f_0)}.\]
Furthermore, if the $h$ chosen above is finite, then
\[\WADD(\Psiw(\log\frac{1}{\alpha}, h, \mu^*(h))) \sim \WADD(\Psic) \sim \frac{|\log \alpha|}{D(f_1\;||\;f_0)}.\]
\end{theorem}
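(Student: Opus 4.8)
The plan is to assemble the statement from the results already established in Sections~\ref{sec:PDCConstraint}--\ref{sec:DECuSumCADDWADD}, together with the optimality lower bound \eqref{eq:lordenLowerBound} of Theorem~\ref{thm:CuSumOptLordenLai}; no new estimates are required. First I would dispose of the $\FAR$ claim: with $D=\log\frac{1}{\alpha}$, Theorem~\ref{thm:FARMean} gives $\FAR(\Psiw)\le\FAR(\Psic)\le\alpha$ for every $h$ (including $h=\infty$) and every $\mu>0$, which is the first display verbatim. For the $\PDC$ claim, fix the target $\beta$ and any $h>0$; Theorem~\ref{thm:PDCmuStar} produces a value $\mu^*(h)$ depending only on $f_0,f_1,h,\beta$ and \emph{not} on $D$, with $\PDC(\Psiw(D,\mu^*(h),h))\le\beta$ for all $D$, in particular for $D=\log\frac{1}{\alpha}$. (When $h=0$ the DE-CuSum reduces to CuSum and $\PDC=1$, recovering the classical case $\beta=1$; the $\PDC$ part is only substantive for $h>0$.) The crucial point is that this choice is made once, uniformly in $D$, before the limit $\alpha\to0$ is taken.

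For the $\CADD$ asymptotics, the upper bound is immediate from Theorem~\ref{thm:CADD}: with $h$ and $\mu=\mu^*(h)$ fixed, $\CADD(\Psiw)\le\CADD(\Psic)+K_1$ with $K_1$ independent of $D$, while Theorem~\ref{thm:CuSumOptLordenLai} gives $\CADD(\Psic)=\Expect_1[\tauc-1]\sim|\log\alpha|/D(f_1\;||\;f_0)$; since $K_1/|\log\alpha|\to0$, we get $\CADD(\Psiw)\le(|\log\alpha|/D(f_1\;||\;f_0))(1+o(1))$. For the matching lower bound, $\FAR(\Psiw)\le\alpha$ lets us apply \eqref{eq:lordenLowerBound} to the DE-CuSum stopping time, giving $\CADD(\Psiw)\ge(|\log\alpha|/D(f_1\;||\;f_0))(1+o(1))$. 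Combining the two yields $\CADD(\Psiw)\sim|\log\alpha|/D(f_1\;||\;f_0)\sim\CADD(\Psic)$, i.e. asymptotic optimality for Problem~\ref{prob:DEPollak}. The $\WADD$ statement is proved identically, with Theorem~\ref{thm:WADD} (which needs $h<\infty$) in place of Theorem~\ref{thm:CADD} and $\WADD$ in place of $\CADD$ throughout: the upper bound is $\WADD(\Psic)+K_2$ with $K_2$ independent of $D$, and the lower bound $\WADD(\Psiw)\ge\CADD(\Psiw)\ge(|\log\alpha|/D(f_1\;||\;f_0))(1+o(1))$ again follows from $\FAR(\Psiw)\le\alpha$.

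Since there is genuinely no new analysis here, the only thing that needs care --- and the reason the statement and its hypotheses are organized this way --- is the order in which the limits are taken: one must first fix $\beta$ and $h$ and extract $\mu^*(h)$ \emph{uniformly in $D$} (this uniformity is exactly the content of Theorem~\ref{thm:PDCmuStar}), and only then send $\alpha\to0$; otherwise the constants $K_1,K_2$ in the $O(1)$ terms of Theorems~\ref{thm:CADD} and~\ref{thm:WADD} could in principle vary with $\alpha$ and the normalization by $|\log\alpha|$ would fail to absorb them. I expect this to be the main (and essentially the only) subtlety; the substance was already spent in proving the $\PDC$, $\FAR$, $\CADD$, and $\WADD$ results.
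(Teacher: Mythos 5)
Your proposal is correct and follows essentially the same route as the paper: the $\FAR$ claim is read off from Theorem~\ref{thm:FARMean}, the $\PDC$ claim from Theorem~\ref{thm:PDCmuStar} (with $\mu^*(h)$ chosen uniformly in $D$), and the delay asymptotics from Theorems~\ref{thm:CADD} and~\ref{thm:WADD} combined with Theorem~\ref{thm:CuSumOptLordenLai}, the paper packaging the last step as Corollary~\ref{thm:CADDWADDCorollary} while you invoke the lower bound \eqref{eq:lordenLowerBound} directly. Your closing remark about fixing $h$ and $\mu^*(h)$ before sending $\alpha\to 0$ is exactly the point the paper relies on.
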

\begin{proof}
The result on $\FAR$ follows from Theorem~\ref{thm:FARMean}. The fact that one can select a $\mu=\mu^*(h)$
to meet the $\PDC$ constraint independent of the choice of $D$ follows from Theorem~\ref{thm:PDCmuStar}.
Finally, the delay asymptotics follow from Theorem~\ref{thm:CADD}, Theorem~\ref{thm:WADD} and Corollary~\ref{thm:CADDWADDCorollary}.
\end{proof}
Since, by Theorem~\ref{thm:CuSumOptLordenLai},
$\frac{|\log \alpha|}{D(f_1\;||\;f_0)}$ is the best possible asymptotics performance for any
given $\FAR$ constraint of $\alpha$, the above statement establishes the asymptotic optimality of the
DE-CuSum algorithm for both Problem~\ref{prob:DEPollak} and Problem~\ref{prob:DELorden}.

\subsection{Design of the DE-CuSum algorithm}
\label{sec:DECUSUM_Design}
We now discuss how to set the parameters $\mu$, $h$ and $D$ so as to meet a given $\FAR$
constraint of $\alpha$ and a $\PDC$ constraint of $\beta$. 

Theorem~\ref{thm:FARMean} provides the guideline for choosing $D$: for any $h, \mu$,
\[\mbox{ if } \ \ D=\log\frac{1}{\alpha}   \ \ \ \ \ \mbox{   then   } \ \ \FAR(\Psiw) \leq \alpha.\]

As discussed earlier, Theorem~\ref{thm:PDCmuStar}
provides a conservative estimate of the $\PDC$. For practical purposes,
we suggest using
the following approximation for $\PDC$:
\begin{equation}\label{eq:PDCApprox1}
\PDC \approx \frac{\Expect_\infty[\lambdaInf]}{\Expect_\infty[\lambdaInf] +
\Expect_\infty[\lceil \frac{W_{\lambdaInf}^{h+}}{\mu}\rceil]}. 
\end{equation}
For large values of $D$, \eqref{eq:PDCApprox1} will indeed provide a good estimate
of the $\PDC$. We note that $\Expect_\infty[\lambdaInf]$ 
can be computed numerically; see Corollary 2.4 in \cite{wood-nonlin-ren-th-book-1982}.
 
If $h=\infty$, then using \eqref{eq:EinfWInf_finite} we can further simplify \eqref{eq:PDCApprox1} to 
\begin{equation}\label{eq:PDCApprox2}
\PDC \approx \frac{\Expect_\infty[\lambdaInf]}{\Expect_\infty[\lambdaInf] +
\frac{\Expect_\infty[|W_{\lambdaInf}|]}{\mu}} = \frac{\mu}{\mu + D(f_0 \;|| \;f_1)}.
\end{equation}
Thus, to ensure $\PDC \leq \beta$, the approximation above suggests 
selecting $\mu$ such that 
\[\mu \leq \frac{\beta}{1-\beta} D(f_0 \;|| \;f_1).\]
In Section~\ref{sec:Trade-off} we provide numerical results that shows that the approximation 
\eqref{eq:PDCApprox2} indeed provides a good estimate of the $\PDC$ when $h=\infty$.   

\section{Trade-off curves}
\label{sec:Trade-off}
The asymptotic optimality of the DE-CuSum algorithm for all $\beta$
does not guarantee good performance for moderate values of FAR. In
Fig.~\ref{fig:DECuSum_Tradeoff}, we plot the trade-off curves
for the CuSum algorithm and the DE-CuSum algorithm, obtained using simulations.
We plot the performance of the DE-CuSum algorithm for two different
PDC constraints: $\beta=0.5$ and $\beta=0.25$.
For simplicity we restrict ourself to the $\CADD$ performance for $h=\infty$ in this section.
Similar performance comparisons can be obtained for $\CADD$ with $h<\infty$, and for $\WADD$.

Each of the curves for the DE-CuSum algorithm in Fig.~\ref{fig:DECuSum_Tradeoff}
is obtained in the following way.
Five different threshold values for $D$ were arbitrarily selected. For each threshold value,
a large value of $\gamma$ was chosen, and
the DE-CuSum algorithm was simulated and the fraction of time the observations are taken
before change was computed. Specifically, $\gamma$ was increased in the multiples of $100$
and an estimate of the $\PDC$ was obtained by Monte Carlo Simulations. The value of $\mu$
was so chosen that the $\PDC$ value obtained in simulations was slightly below the constraint
$\beta=0.5$ or $0.25$.
For this value of $\mu$ and for the chosen threshold, the $\FAR$ was computed by selecting
the change time to be $\gamma=\infty$ (generating random numbers from
$f_0 \sim {\cal N}(0,1)$). The $\CADD$ was then computed for the above choice of $\mu$ and
$D$ by varying the value of $\gamma$ from $1,2,\ldots$ and recording the maximum of
the conditional delay. The maximum was achieved in the first five slots.

As can be seen from the figure, a PDC of $0.5$
(using only 50\% of the samples in the long run) can be achieved using
the DE-CuSum algorithm with a small penalty
on the delay. If we wish to achieve a PDC of $0.25$,
then we have to incur a significant penalty (of approximately 6 slots in Fig.~\ref{fig:DECuSum_Tradeoff}).
But, note that the difference of delay with the CuSum algorithm remains fixed as $\mathrm{FAR} \to 0$.
This is due to the result reported in Theorem~\ref{thm:CADD} and this is precisely the reason
the DE-CuSum algorithm is asymptotic optimal.
The trade-off between $\CADD$ and $\FAR$ is a function of the K-L divergence between
the pdf's $f_1$ and $f_0$: the larger the K-L divergence the more is the fraction of samples
that can dropped for a given loss in delay performance.

In Fig.~\ref{fig:CuSum_DECuSum_Comp} we compare the performance of the DE-CuSum algorithm
with the \textit{fraction sampling} scheme, in which, to
achieve a PDC of $\beta$, the CuSum algorithm is employed,
and a sample is chosen with probability $\beta$ for decision making.
Note that this scheme skips samples without exploiting any knowledge about the state of the system.
As seen in Fig. \ref{fig:CuSum_DECuSum_Comp},
the DE-CuSum algorithm performs considerably better than the fractional sampling scheme.
Thus, the trade-off curves show that the DE-CuSum algorithm has good performance
even for moderate FAR, when the PDC constraint is moderate.
%Thus, in this regime, any other data-efficient algorithm cannot outperform the DE-CuSum algorithm by a significant margin.

\begin{figure}[htb]
\center
%\hspace{3cm}
\includegraphics[width=8cm, height=5cm]{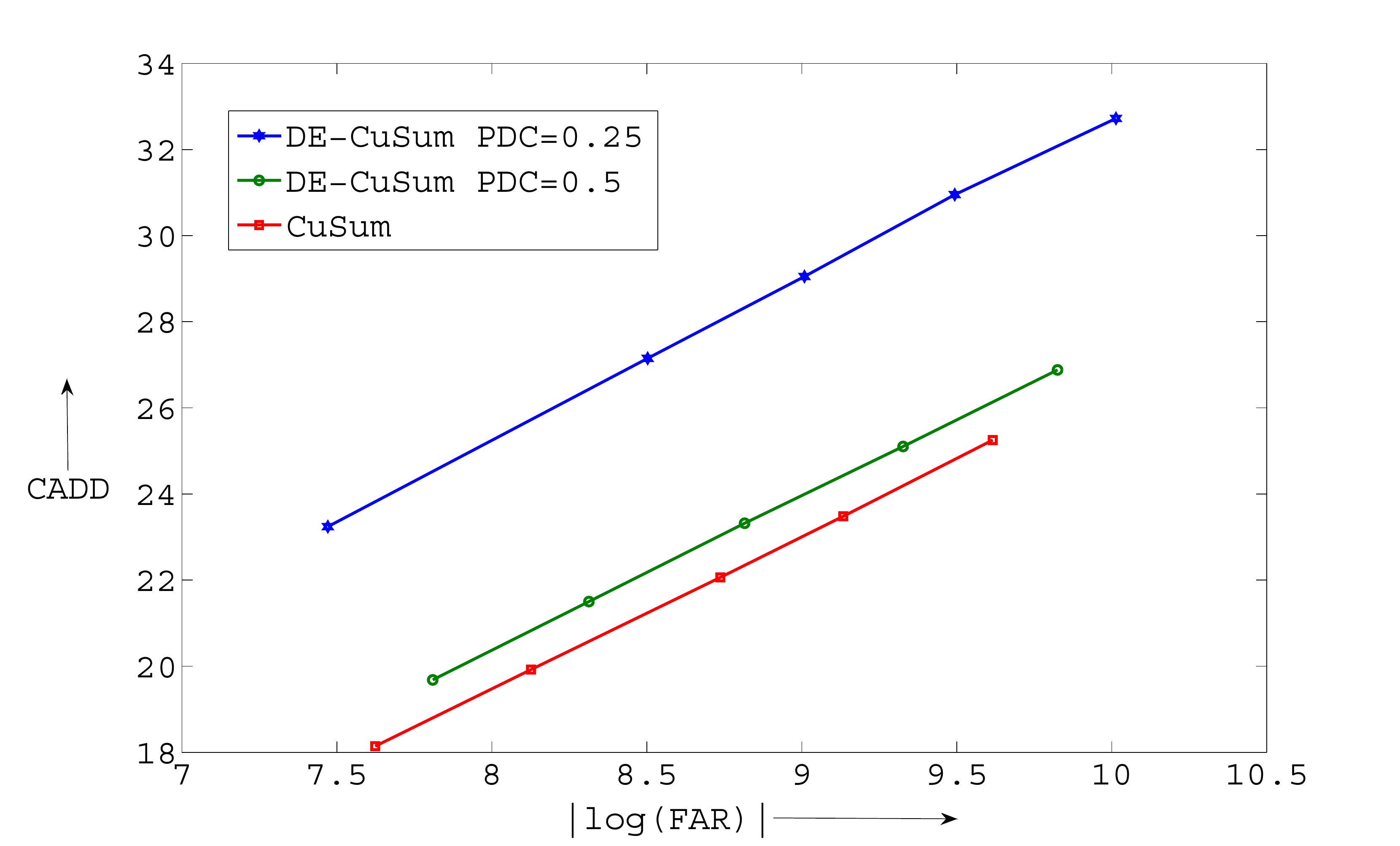}
\caption{Trade-off curves for the DE-CuSum algorithm for $\mathrm{PDC}=0.25, 0.5$, with $f_0 \sim {\cal N}(0,1)$ and $f_1 \sim {\cal N}(0.75,1)$.}
\label{fig:DECuSum_Tradeoff}
\end{figure}

\begin{figure}[htb]
\center
%\hspace{3cm}
\includegraphics[width=8cm, height=5cm]{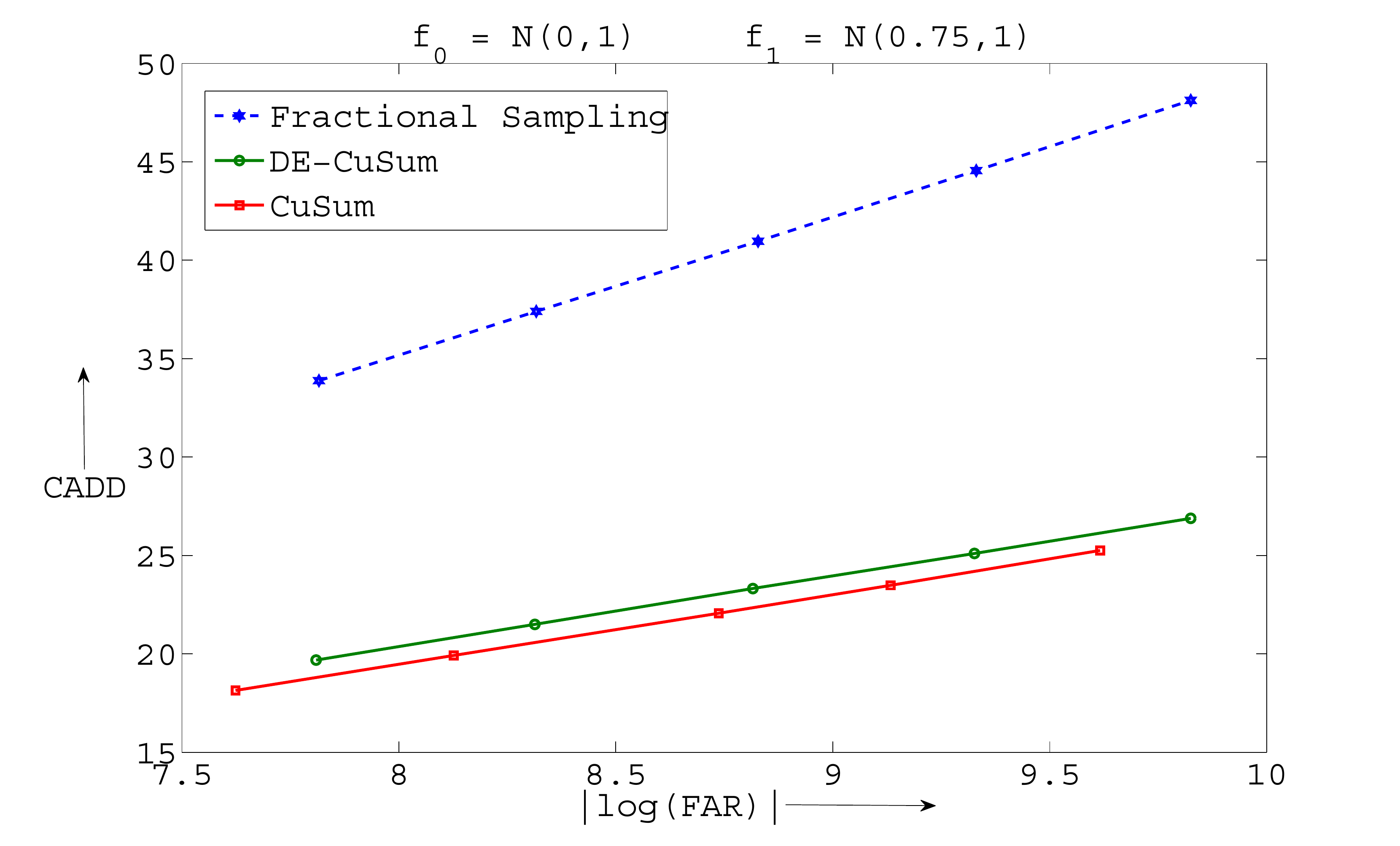}
\caption{Comparative performance of the DE-CuSum algorithm with the CuSum algorithm and the fractional-sampling scheme: $\mathrm{PDC}=0.5$, with $f_0 \sim {\cal N}(0,1)$ and $f_1 \sim {\cal N}(0.75,1)$.}
\label{fig:CuSum_DECuSum_Comp}
\end{figure}

We now provide numerical results that shows that \eqref{eq:PDCApprox2} provides a good estimate 
for the $\PDC$. We use the following parameters: $f_0 \sim {\cal N}(0,1)$, $f_1 \sim {\cal N}(0.75,1)$
and set $h=\infty$. 
In Table \ref{tab:PDCapproxFixedmu}, we fix the value of $\mu$ and vary $D$ and compare the 
$\PDC$ obtained using simulations and the one obtained using \eqref{eq:PDCApprox2}, that is 
using the approximation $\PDC \approx \frac{\mu}{\mu+D(f_0||f_1)}$. 
We see that the approximation becomes more accurate as $D$ increases. We also note that 
the $\PDC$ obtained using simulations does not converge to $\frac{\mu}{\mu+D(f_0||f_1)}$, 
even as $D$ becomes large,   
because of the effect of the presence of a ceiling function in the $\PDC$ expression; 
see \eqref{eq:TXYCeil} and \eqref{eq:PDCExpr}. 

In Table~\ref{tab:PDCapproxFixedD}, we next fix a large value of $D$, 
specifically $D=6$, for which the $\PDC$ approximation is most 
accurate in Table~\ref{tab:PDCapproxFixedmu}, and check the accuracy of the approximation 
$\frac{\mu}{\mu+D(f_0||f_1)}$ by varying $\mu$. We see in the table
that the approximation is more accurate for small values of $\mu$. This is due to the fact 
that the effect of the ceiling function in the $\PDC$ \eqref{eq:TXYCeil}, \eqref{eq:PDCExpr}
is negligible when $\mu$ is small.

\begin{table}[ht]
  \centering
\subfloat[Fixed $\mu$]
{ \label{tab:PDCapproxFixedmu}
  \begin{tabular}{|l|l|l|l|l|l|l|l|l|l|l|l|l|}
\hline
&&\multicolumn{2}{c|}{$\PDC$}\\
\cline{1-4}
$D$ & $\mu$ & Simulations &   Approx      \eqref{eq:PDCApprox2} \\
    &       &             &           $\frac{\mu}{\mu+D(f_0||f_1)}$\\
\hline
1   & 0.1   &     0.16     &   0.26\\
2   & 0.1   &     0.20     &   0.26\\
3   & 0.1   &     0.22     &   0.26\\
4   & 0.1   &     0.238     &   0.26\\
6   & 0.1   &     0.248     &   0.26\\
\hline
\end{tabular}
}
\hspace{1.5cm}
\subfloat[Fixed $D$]{\label{tab:PDCapproxFixedD}
\begin{tabular}{|l|l|l|l|l|l|l|l|l|l|l|l|l|}
\hline
&&\multicolumn{2}{c|}{$\PDC$}\\
\cline{1-4}
$D$ & $\mu$ & Simulations &   Approx      \eqref{eq:PDCApprox2} \\
    &       &             &           $\frac{\mu}{\mu+D(f_0||f_1)}$\\
\hline
6   & 0.01   &     0.033     &   0.034\\
6   & 0.05   &     0.145     &   0.151\\
6   & 0.2   &     0.37     &   0.41\\
6   & 0.3   &     0.46     &   0.51\\
6   & 0.4   &     0.51     &   0.58\\
6   & 0.6   &     0.58     &   0.68\\
\hline
\end{tabular}
}
  \caption{Comparison of $\PDC$ obtained using simulations with the approximation \eqref{eq:PDCApprox2} for $f_0 \sim {\cal N}(0,1)$, $f_1 \sim {\cal N}(0.75,1)$ and $h=\infty$. }
  \label{tab:PDCComparison}
\end{table}

\begin{comment}
\begin{table}[ht]
\begin{minipage}[b]{0.45\linewidth}\centering
\caption{Fixed $\mu$} \label{tab:PDCapproxFixedmu}
 \begin{tabular}{|l|l|l|l|l|l|l|l|l|l|l|l|l|}
\hline
&&\multicolumn{2}{c|}{$\PDC$}\\
\cline{1-4}
$D$ & $\mu$ & Simulations &   Approx      \eqref{eq:PDCApprox2} \\
    &       &             &           $\frac{\mu}{\mu+D(f_0||f_1)}$\\
\hline
1   & 0.1   &     0.16     &   0.26\\
2   & 0.1   &     0.20     &   0.26\\
3   & 0.1   &     0.22     &   0.26\\
4   & 0.1   &     0.238     &   0.26\\
6   & 0.1   &     0.248     &   0.26\\
\hline
\end{tabular}
\end{minipage}
\hspace{0.5cm}
\begin{minipage}[b]{0.45\linewidth}\centering
\caption{Fixed $D$} \label{tab:PDCapproxFixedD}
\begin{tabular}{|l|l|l|l|l|l|l|l|l|l|l|l|l|}
\hline
&&\multicolumn{2}{c|}{$\PDC$}\\
\cline{1-4}
$D$ & $\mu$ & Simulations &   Approx      \eqref{eq:PDCApprox2} \\
    &       &             &           $\frac{\mu}{\mu+D(f_0||f_1)}$\\
\hline
6   & 0.01   &     0.033     &   0.034\\
6   & 0.05   &     0.145     &   0.151\\
6   & 0.2   &     0.37     &   0.41\\
6   & 0.3   &     0.46     &   0.51\\
6   & 0.4   &     0.51     &   0.58\\
6   & 0.6   &     0.58     &   0.68\\
\hline
\end{tabular}
\end{minipage}
\end{table}
\end{comment}

\section{Conclusions and future work}
\label{sec:Conclusion}
We proposed two minimax formulations for data-efficient non-Bayesian quickest change detection,
that are extensions of the standard minimax formulations in \cite{lord-amstat-1971}
and \cite{poll-astat-1985} to the data-efficient setting.
We proposed an algorithm called the DE-CuSum algorithm, that is a modified version of the CuSum algorithm
 from \cite{page-biometrica-1954},
and showed that it is asymptotically optimal
for both the minimax formulations we proposed, as the false alarm rate goes to zero.

We discussed that, like the CuSum algorithm, the DE-CuSum algorithm
can also be seen as a sequence of SPRTs, with the difference that each SPRT is now followed
by a `sleep' time, the duration of which is a function of the accumulated log likelihood
of the observations taken in the SPRT preceding it. %, and the design parameters $\mu$ and $h$.
This similarity was exploited
to analyze the performance of the DE-CuSum algorithm using standard renewal theory tools,
and also to show its asymptotic optimality.
We also showed in our numerical results that the DE-CuSum algorithm has good trade-off curves and provides substantial
benefits over the approach of fractional sampling.
The techniques developed in this paper and the insights obtained can be used to study data-efficient
quickest change detection in sensor networks. See
\cite{bane-veer-ssp-2012} for some preliminary results.

\section*{APPENDIX}
\begin{proof}[Proof of Lemma~\ref{thm:lambdaDUpperBound}]
If $0 < D(f_0 \;||\; f_1)< \infty$, then $\Expect_\infty[\lambdaInf] < \infty$.
Thus, $\Prob_\infty(\lambdaInf < \infty) = 1$. Choose an arbitrary $D$, and partition $\{\lambdaInf < \infty\}$
in to three events:
\[\mathcal{A} = \{\lambdaInf < \infty\} \cap \{L(X_1) < 0\},\]
\[\mathcal{B} = \{\lambdaInf < \infty\} \cap \{L(X_1) \geq 0\} \cap \{W_n \mbox{ never crosses} \; D\},\]
\[\mathcal{C} = (\mathcal{A} \cup \mathcal{B})'.\]
Then, clearly
\[
\Prob_\infty(\mathcal{A}) = \Prob_\infty(L(X_1) < 0),
\]
and
\begin{eqnarray*}
\Prob_\infty(\mathcal{A}\cup \mathcal{B}) &=& \Prob_\infty(W_{\lambdad}<0) \\
& > & \Prob_\infty(\mathcal{A}) \\
&=& \Prob_\infty(L(X_1) < 0) > 0.
\end{eqnarray*}
Thus, $\Expect_\infty[\lambdad \Big| W_{\lambdad}<0]$ is well defined and
\begin{equation*}
\begin{split}
\Expect_\infty[\lambdaInf] &\geq \Expect_\infty[\lambdaInf ; \mathcal{A} \cup \mathcal{B}]\\
                           & \geq   \Expect_\infty[\lambdaInf \Big| \mathcal{A} \cup \mathcal{B}]\; \Prob_\infty(\mathcal{A})\\
                           & =   \Expect_\infty[\lambdad \Big| W_{\lambdad}<0] \; \Prob_\infty(L(X_1) < 0).\\
\end{split}
\end{equation*}
This proves the lemma because $\Prob_\infty(L(X_1) < 0) > 0$.
\end{proof}
\smallskip

\begin{proof}[Proof of Lemma~\ref{thm:EInfTxy_UpperLower}]
Since $T(x,y,\mu) = \lceil |y-x|/\mu \rceil$, we have
\[\frac{|y-x|}{\mu} \leq T(x,y,\mu) \leq \frac{|y-x|}{\mu} +1.  \]
We will use this simple inequality to obtain the upper and lower bounds.

We first obtain the upper bound. Clearly,
\begin{equation*}
\begin{split}
\Expect_\infty[T(W_{\lambdad}^{h+} ,0,\mu) \Big| W_{\lambdad} < 0] \leq \frac{\Expect_\infty[|W_{\lambdad}^{h+}| \; \Big| \; W_{\lambdad} < 0]}{\mu}+1.
\end{split}
\end{equation*}
An upper bound for the right hand side of the above equation is easily obtained.
First note that from \eqref{eq:EinfWInf_hplus_finite}
\[ \Expect_\infty[|W_{\lambdaInf}^{h+}|] \; \leq \; \Expect_\infty[|W_{\lambdaInf}|] \; < \; \infty. \]
Thus, from the notation introduced in the proof of Lemma~\ref{thm:lambdaDUpperBound} above
\begin{equation*}
\begin{split}
\Expect_\infty[|W_{\lambdaInf}^{h+}|] &\geq \Expect_\infty[|W_{\lambdaInf}^{h+}| ; \mathcal{A} \cup \mathcal{B}]\\
                           & \geq   \Expect_\infty[|W_{\lambdaInf}^{h+}|  \;\Big| \; \mathcal{A} \cup \mathcal{B}]\ \Prob_\infty(\mathcal{A})\\
                           & =   \Expect_\infty[|W_{\lambdad}^{h+}|  \;\Big| \; W_{\lambdad}<0] \; \Prob_\infty(L(X_1) < 0).\\
\end{split}
\end{equation*}
This completes the proof for the upper bound.

For the lower bound we have
\begin{equation*}
\begin{split}
\Expect_\infty[T(W_{\lambdad}^{h+} &,0,\mu)\; \Big| \; W_{\lambdad} < 0] \\
&\geq \frac{\Expect_\infty[|W_{\lambdad}^{h+}| \; \Big| \; W_{\lambdad} < 0]}{\mu}\\
&\geq \frac{\Expect_\infty[|W_{\lambdad}^{h+}| \; ; \{L(X_1)<0\}\; \Big| \; W_{\lambdad} < 0]}{\mu}\\
& = \frac{\Expect_\infty[|L(X_1)^{h+}| \; \Big| \; L(X_1)<0]}{\mu}  \; \Prob_\infty(L(X_1) < 0)\\
\end{split}
\end{equation*}
\end{proof}

\begin{proof}[Proof of Lemma~\ref{thm:E1Txy}]
%Condition required: $0< D(f_0 \; \| \; f_1) < \infty$ and $\mu > 0$.
First note that
\[\Prob_1(W_{\lambdad} < 0) > \Prob_1(L(X_1)<0) > 0.\]
Thus, $\Expect_1[T((W_{\lambdad})^{h+},0,\mu)| W_{\lambdad} < 0]$ is well defined.
Also using the inequality on $T(x,y,\mu)$ from Lemma~\ref{thm:EInfTxy_UpperLower} we have
\begin{equation}\label{eq:lemTempeq1}
\begin{split}
\Expect_1[T(W_{\lambdad}^{h+} ,0,\mu) \Big| W_{\lambdad} < 0] \leq \frac{\Expect_1[|W_{\lambdad}^{h+}| \; \Big| \; W_{\lambdad} < 0]}{\mu}+1 %&\leq \frac{\Expect_1[|W_{\lambdad}^{h+}| \; ; \; W_{\lambdad} < 0]}{\mu \; \Prob_1(W_{\lambdad} < 0)}+1 \\
%&\leq \frac{\Expect_1[|W_{\lambdad}^{h+}| \; ; \; W_{\lambdad} < 0]}{\mu \; \Prob_1(L(X_1)<0)}+1 \\
\end{split}
\end{equation}

We now get an upper bound on the right hand side of the above equation. By Wald's likelihood
ratio identity \cite{sieg-seq-anal-book-1985} and \eqref{eq:EinfWInf_hplus_finite},
\begin{equation}\label{eq:lemTempeq2}
\begin{split}
\Expect_1[|W_{\lambdaInf}^{h+}| \; &; \; W_{\lambdaInf} < 0] \\
&= \Expect_1[|W_{\lambdaInf}^{h+}| \; ; \; \lambdaInf < \infty]  \\
&= \Expect_\infty[|W_{\lambdaInf}^{h+}| \prod_{k=1}^{\lambdaInf} L(X_k)\; ; \; \lambdaInf < \infty]  \\
&= \Expect_\infty[|W_{\lambdaInf}^{h+}| \; e^{W_{\lambdaInf}} \; ; \; W_{\lambdaInf} < 0]\\
&\leq \Expect_\infty[|W_{\lambdaInf}^{h+}|] \; \leq \; \Expect_\infty[|W_{\lambdaInf}|]\; <\;  \infty.\\
\end{split}
\end{equation}
Using again the notation introduced in the proof of Lemma~\ref{thm:lambdaDUpperBound} we have
\begin{equation}\label{eq:lemTempeq3}
\begin{split}
\Expect_1[|W_{\lambdaInf}^{h+}| \; &; \; W_{\lambdaInf} < 0]  \\
&\geq \Expect_1[|W_{\lambdaInf}^{h+}| \; ; \; (\{W_{\lambdaInf} < 0\}) \cap (\mathcal{A} \cup \mathcal{B})]\\
&= \Expect_1[|W_{\lambdaInf}^{h+}| \; ; \; \mathcal{A} \cup \mathcal{B}]\\
& \geq   \Expect_1[|W_{\lambdaInf}^{h+}|  \;| \; \mathcal{A} \cup \mathcal{B}]\ \Prob_1(\mathcal{A})\\
& =   \Expect_1[|W_{\lambdad}^{h+}|  \;| \; W_{\lambdad}<0] \; \Prob_1(L(X_1) < 0).\\
\end{split}
\end{equation}
Thus from \eqref{eq:lemTempeq1}, \eqref{eq:lemTempeq2}, and \eqref{eq:lemTempeq3}
\begin{equation*}
\begin{split}
\Expect_1[T(W_{\lambdad}^{h+} ,0,\mu)& | W_{\lambdad} < 0]\\
&\leq \frac{\Expect_1[|W_{\lambdad}^{h+}| \; \Big| \; W_{\lambdad} < 0]}{\mu }+1 \\
&\leq \frac{\Expect_1[|W_{\lambdaInf}^{h+}| \; ; \; W_{\lambdaInf} < 0]}{\mu \; \Prob_1(L(X_1)<0)}+1 \\
&\leq \frac{\Expect_\infty[|W_{\lambdaInf}^{h+}| ]}{\mu \; \Prob_1(L(X_1)<0)}+1 \\
& < \infty.
\end{split}
\end{equation*}
This proves the lemma.
\end{proof}

\begin{proof}[Proof of Lemma~\ref{thm:resettingLemma}]
Let
\[\tauc(x) = \inf\{n\geq 1: C_n > D; C_0=x\}.\]
Here, $C_n$ is the CuSum statistic and evolves according the description of the algorithm in
Algorithm~\ref{algo:CuSum}. Thus, $\tauc(x)$ is the first time for the CuSum algorithm
to cross $D$, when starting at $C_0=x$. Clearly, $\tauc(x) = \tauc$ if $x=0$.
It is easy to see by sample path wise arguments that
\[\Expect_1[\tauc(x)] \leq \Expect_1[\tauc].\]
The proof depends on the above inequality.

Let $\mathcal{A}_x$ be the event that the CuSum statistic, starting with $C_0=x$, touches
zero before crossing the upper threshold $D$. Let $q_x = \Prob_1(\mathcal{A}_x)$. Then,
\[\Expect_1[\tauc(x)] = \Expect_1[\tauc(x); \mathcal{A}_x] + \Expect_1[\tauc(x); \mathcal{A}_x'] \leq \Expect_1[\tauc].\]
Note that
\[\Expect_1[\tauc(x); \mathcal{A}_x'] = \Expect_1[\tauw(x); \mathcal{A}_x'].\]
We call this common constant $\mathsf{t_1}$. Also note that on $\mathcal{A}_x$,
the average time taken to reach 0 is the same for both the CuSum and the DE-CuSum algorithm.
We call this common average conditional delay by $\mathsf{t_2}$. Thus,
\[\Expect_1[\tauc(x)] = (\mathsf{t_1}) (1-q_x) + q_x (\mathsf{t_2} + \Expect_1[\tauc]) \leq \Expect_1[\tauc].\]
The equality in the above equation is true because, once the DE-CuSum statistic reaches zero, it is
reset to zero and the average delay that point onwards is $\Expect_1[\tauc]$.

Then for any $\mathsf{t_3} \geq \Expect_1[\tauc]$ we have
\[(\mathsf{t_1}) (1-q_x) + q_x (\mathsf{t_2} + \mathsf{t_3}) \leq \mathsf{t_3}.\]
This is because for $\mathsf{t_3} \geq \Expect_1[\tauc]$
\begin{eqnarray*}
(\mathsf{t_1}) (1-q_x) &+& q_x (\mathsf{t_2} + \mathsf{t_3}) \\
&=& (\mathsf{t_1}) (1-q_x) + q_x (\mathsf{t_2} + \Expect_1[\tauc] + \mathsf{t_3} - \Expect_1[\tauc])\\
&\leq &  \Expect_1[\tauc] + q_x ( \mathsf{t_3} - \Expect_1[\tauc]) \\
& \leq & \mathsf{t_3}.
\end{eqnarray*}
It is easy to see that
\[\Expect_1[\tauw(x)] \leq (\mathsf{t_1}) (1-q_x) + q_x (\mathsf{t_2} + T_U^{(1)}(h,\mu) + \Expect_1[\tauw]).\]
This is because on $\mathcal{A}_x$, the average delay of the DE-CuSum algorithm is
the average time to reach 0, which is $\mathsf{t_2}$,
 plus the average time spent below 0 due to the undershoot, which is bounded from above by
 $T_U^{(1)}(h,\mu)$, plus the average delay after the sojourn below 0, which is $\Expect_1[\tauw]$.
The latter is due to the renewal nature of the DE-CuSum algorithm.
Since $T_U^{(1)}(h,\mu) + \Expect_1[\tauw] \geq \Expect_1[\tauc]$, the first part of lemma is
proved if we set $\mathsf{t_3} = T_U^{(1)}(h,\mu) + \Expect_1[\tauw]$.

For the second part, note that $T_U^{(1)}(h,\mu) \leq \lceil h/\mu \rceil$.
\end{proof}

%\pagebreak
%\nocite{*}
\bibliographystyle{ieeetr}

%\bibliographystyle{elsarticle-harv}

%\bibliographystyle{elsarticle-harv}
%\bibliography{QCD_verVV}
\bibliography{QCD_verSubmitted}
\end{document}